\newcommand{\addresseshere}{%
  \enddoc@text\let\enddoc@text\relax
}
\newcommand{\infl}{\textup{inf}}
\newcommand{\sel}[1]{\textup{Sel}_{#1}}
\newcommand{\cores}{\cor}
\title{Quadratic Twists as Random Variables}
\author{Ross Paterson}
\subjclass[2010]{Primary 11G05; Secondary 11G07, 11R45}
\address{School of Mathematics, University of Bristol, Bristol, BS8 1TW, UK, and the Heilbronn Institute for Mathematical Research, Bristol, UK.}
\email{rosspatersonmath@gmail.com}
\urladdr{\url{https://ramifiedprime.github.io}}
\begin{document}

\begin{abstract}
    Let $D\neq 1$ be a fixed squarefree integer.  For elliptic curves $E/\QQ$, writing $E_D$ for the quadratic twist by $D$, we consider the question of how often $E(\QQ)$ and $E_{D}(\QQ)$ generate $E(\QQ(\sqrt{D}))$.  We bound the proportion of $E$, ordered by height, for which this is not the case, showing that it is very small for typical $D$.

    The central theorem is concerned with intersections of $2$-Selmer groups of quadratic twists.  We establish their average size in terms of a product of local densities.  We additionally propose a heuristic model for these intersections, which explains our result and similar results in the literature.  This heuristic predicts further results in other families.
\end{abstract}

\maketitle
\tableofcontents
\section{Introduction}
Let $K=\QQ(\sqrt{D})$ be a quadratic number field, with Galois group $G$, and let $E:y^2=x^3+Ax+B$ be an elliptic curve defined over $\QQ$.  The quadratic twist of $E$ by $D$ is given by
\[E_D:Dy^2=x^3+Ax+B.\]
There is an obvious isomorphism $\varphi_D:E_D\to E$ defined over $K$, namely the one given by $\left(x,y\right)\mapsto \left(x,y\sqrt{D}\right)$.
This morphism identifies $E_D(\QQ)$ with the set of points of $E(K)$ on which the generator of $G$ acts as multiplication by $-1$.

\subsection{Mordell--Weil Groups}
We are motivated by the following question.

\begin{ques}\label{ques:MW}
    Is the natural map $\eta_{E,D}:E_D(\QQ)\times E(\QQ)\to E(K)$, defined by
    \[(P, Q)\mapsto \varphi_{D}(P)+Q,\]
    an isomorphism?
\end{ques}

It is easy to see that the elements of $E(\QQ)[2]$ present an obstruction to $\eta_{E,D}$ being injective.  However, even when $E(\QQ)[2]=0$ there is no uniform answer to \Cref{ques:MW}.  The examples below exhibit each answer.

\begin{example}[Obtained using \texttt{MAGMA} \cite{MR1484478}]
Consider the field $K=\QQ(i)$ where $i$ is a primitive $4$th root of unity, and the elliptic curves
\begin{align*}
E_1&:y^2=x^3+x+1,\\
E_2&:y^2=x^3-3x+1.
\end{align*}
These curves both have no torsion points over $K$ and rank $2$, with generators as follows:
\begin{align*}
E_1(K)&=\gp{(i,1), (-i,1)}\cong \ZZ^2,\\
E_2(K)&=\gp{(1,i), (0,1)}\cong \ZZ^2.
\end{align*}
On one hand, $E_1(\QQ)=\gp{(0,-1)}$ and $E_1^{-1}(\QQ)=\gp{(-1,1)}$, so the image of $\eta_{E_1,-1}$ is spanned by $(0,-1)=(-i,1)+(i,1)$ and $(-1,i)=(-i,1)-(i,1)$.  Thus $\im(\eta_{E_1,-1})\subsetneq E_1(K)$ is an index $2$ subgroup, and so the answer to \Cref{ques:MW} is no for $E_1$.

On the other hand, clearly $(1,1)\in E_2^{-1}(\QQ)$, $(1,i)=\eta_{E_2,-1}(1,1)$ and $(0,1)\in E_2(\QQ)$.  In particular, the answer to \Cref{ques:MW} for $E_2$ is yes.
\end{example}

This article will consider \textit{how often} the answer to \Cref{ques:MW} is negative, i.e. how often $\eta_{E,D}$ fails to be an isomorphism as $E/\QQ$ varies in the family of all elliptic curves (ordered by height).  Our main result, \Cref{thm:INTROTHM imprecise on decomp}, shows that for a typical fixed $D$ and a randomly selected elliptic curve $E/\QQ$, it is very likely that $\eta_{E,D}$ is an isomorphism.

More precisely: throughout the article we let
\begin{equation}\label{eq:epsilon}
\Epsilon=\set{E_{A,B}:y^2=x^3+Ax+B: \substack{A,B\in\ZZ,\\\gcd(A^3,B^2)\textnormal{ is }12^{th}\textnormal{-power free,}\\\textnormal{and }4A^3+27B^2\neq 0}}.
\end{equation}
It is well known, see e.g. \cite{silverman2009arithmetic}*{III.1}, that every elliptic curve defined over the rational numbers has precisely one model in the set $\Epsilon$.  The (na\"ive) height of $E=E_{A,B}\in\Epsilon$ is defined to be $H(E)=\max\{4\abs{A}^3,27B^2\}$, and for every positive real number $X$, we write $\Epsilon(X)$ for the finite subset of $\Epsilon$ of curves which have height at most $X$.

\begin{theorem}[\Cref{thm:THM imprecise on decomp}]\label{thm:INTROTHM imprecise on decomp}
    For each fixed squarefree integer $D$, the probability that $\eta_{E,D}$ is not an isomorphism is small.  More precisely,
    \[\limsup_{X\to\infty}\frac{\#\set{E\in\Epsilon(X)~:~\eta_{E,D}\textnormal{ is not an isomorphism}}}{\#\Epsilon(X)}\ll \braces{\frac{23}{24}}^{\omega(D)},\]
    where the implied constant is independent of $D$. 
\end{theorem}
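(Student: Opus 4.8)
The plan is to deduce the bound from a first-moment estimate for the intersection of $2$-Selmer groups, which is the central theorem of the paper. The starting point is that the isomorphism $\varphi_D$ restricts to a $\QQ$-rational isomorphism $E_D[2]\xrightarrow{\sim}E[2]$ (on $2$-torsion the factor $\sqrt D$ acts trivially), so that both $\sel{2}(E/\QQ)$ and $\sel{2}(E_D/\QQ)$ may be regarded as subgroups of the single group $H^1(\QQ,E[2])$, and their intersection $I_E=\sel{2}(E/\QQ)\cap\sel{2}(E_D/\QQ)$ is well defined. The first reduction is cohomological: I would show that if $\eta_{E,D}$ fails to be an isomorphism then $I_E$ is strictly larger than its trivial part. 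Concretely, the cokernel of $\eta_{E,D}$ is a $2$-group measured by the Tate cohomology $\hat H^\ast(G,E(K))$ of the $G$-module $E(K)$, and running $2$-descent for $E$, $E_D$ and $E/K$ through the inflation--restriction sequence for $K/\QQ$ identifies a non-trivial such cohomology class with a non-trivial element of $I_E$. Since the locus of $E$ with $E(\QQ)[2]\neq 0$ (equivalently, with the defining cubic having a rational root) is cut out by a single relation and hence has density zero, for all but a negligible set of $E$ the trivial part of $I_E$ is $\{0\}$, and ``$\eta_{E,D}$ not an isomorphism'' forces $\#I_E\geq 2$.

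With this reduction in hand, the argument becomes a pure first moment. As $\#I_E-1$ is a non-negative integer which is $\geq 1$ whenever $\eta_{E,D}$ is not an isomorphism (away from the density-zero set above), I would bound
\[
\frac{\#\set{E\in\Epsilon(X):\eta_{E,D}\textnormal{ not an isomorphism}}}{\#\Epsilon(X)}\;\leq\;\frac{1}{\#\Epsilon(X)}\sum_{E\in\Epsilon(X)}\bigl(\#I_E-1\bigr)+o(1).
\]
The central theorem then supplies the limit of the right-hand side as $X\to\infty$, expressing the average of $\#I_E$ as a product of local densities $\prod_v\delta_v(D)$ arising from a geometry-of-numbers and equidistribution computation in the style of Bhargava--Shankar. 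The proof is thereby reduced to bounding the excess $\prod_v\delta_v(D)-1$.

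The crux is the local analysis. Away from $D$ the curves $E$ and $E_D$ become isomorphic over $\QQ_v$ (the place splits, or $D$ is a local square), so the two local Selmer conditions coincide and $\delta_v(D)$ is the generic factor; the product of these generic factors over all $v$, together with the archimedean and $2$-adic contributions, is convergent and bounded by an absolute constant, which furnishes the implied constant. At a prime $p\mid D$, by contrast, $E$ and $E_D$ are non-isomorphic over $\QQ_p$, and the local images $\im(E(\QQ_p)/2)$ and $\im(E_D(\QQ_p)/2)$ are two maximal isotropic subspaces of $H^1(\QQ_p,E[2])$ whose intersection is typically small; integrating the resulting local mass over the $p$-adic family of models shows that $\delta_p(D)$ is suppressed relative to the generic factor. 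The main obstacle, and the technical heart of the argument, is to carry out this ramified local computation sharply and uniformly in $p$, establishing that the suppression factor is at most $23/24$ for every $p\mid D$. Granting this, multiplicativity of the local densities yields $\prod_v\delta_v(D)-1\ll(23/24)^{\omega(D)}$ with an absolute implied constant, and combining with the first-moment inequality above completes the proof.
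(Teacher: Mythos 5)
Your proposal is correct and follows essentially the same route as the paper: reduce ``$\eta_{E,D}$ not an isomorphism'' (away from the density-zero locus $E(\QQ)[2]\neq 0$) to nontriviality of $\sel{2}(E/\QQ)\cap\sel{2}(E_D/\QQ)$, which the paper identifies with the corestriction Selmer group $\sel{\CCC(\QQ(\sqrt{D}))}(\QQ,E[2])$, then apply the first-moment bound from the product-of-local-densities theorem and observe that each ramified prime contributes a factor tending to $23/24$. The only cosmetic differences are that the paper cites \cite{MR4400944}*{Lemma 6.10} for the reduction you sketch via Tate cohomology, and invokes Hilbert irreducibility for the $2$-torsion locus; also note that at inert primes $p\nmid D$ the curves $E$ and $E_D$ are not locally isomorphic, though the local factors there are still $1+O(p^{-2})$ and their product is absolutely bounded, as your argument requires.
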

\begin{rem}
The analogous question modulo torsion has already been studied in quadratic twist families of elliptic curves with full $2$-torsion by Morgan--Paterson \cite{MR4400944}*{Theorem 1.7}.  There it is shown that, modulo $2$-torsion, $\eta_{E,D}$ is an isomorphism for $100\%$ of quadratic twists.
\end{rem}
  \Cref{thm:INTROTHM imprecise on decomp} arises as a consequence of our investigation of a seemingly different question, on the behaviour of $2$-Selmer groups of quadratic twists of elliptic curves as random variables, which we now move on to discuss.

\subsection{Intersections of Selmer Groups}
For each squarefree integer $D$, the $\QQ(\sqrt{D})$-isomorphism $\varphi_D:E_D\to E$ restricts to an isomorphism (over $\QQ$) of Galois modules 
\[E_D[2]\cong_{G_\QQ} E[2].\]
In particular, $\varphi_D$ induces an isomorphism $H^1(\QQ, E_D[2])\cong H^1(\QQ, E[2])$ and so under this identification we can view the $2$-Selmer groups of $E$ and $E_D$ as embedded in the same space:
\[\sel{2}(E/\QQ),\sel{2}(E_D/\QQ)\subseteq H^1(\QQ, E[2]).\]
As $E$ varies, one thinks of the Selmer groups as appropriately random subspaces of $H^1(\QQ, E[2])$ (see \cite{Poonen_2012}).  It is natural to ask how often they encounter one another.

\begin{ques}\label{ques:INTRO Selmer intersection how often}
    For a fixed $D$, and $E/\QQ$ varying in the family of all elliptic curves, what is the average size of $\sel{2}(E/\QQ)\cap\sel{2}(E_D/\QQ)$?
\end{ques}

\noindent In this article we present both an answer to this question, and a heuristic to explain it.  The heuristic explains similar results for other families in the literature, and also predicts the behaviour for further families.
\subsubsection{Theorems}
In this article we obtain the average size of this intersection as a product of local densities.

\begin{theorem}[\Cref{thm:average size of intersection is product of densities}]\label{thm:INTRO average size of intersection is product of densities}
    Let $S$ be a finite set of squarefree integers.  Then,
    \[\lim_{X\to\infty}\frac{\sum_{E\in\Epsilon(X)}\#\bigcap\limits_{D\in S}\sel{2}(E_D/\QQ)}{\#\Epsilon(X)}=1+2\prod_{v\in\places_\QQ}\delta_{S,v}>1,\]
    where the $\delta_{S,v}$ are local densities defined in \Cref{thm:average size of intersection is product of densities} depending on $S$, and $\Omega_\QQ$ denotes the set of places of $\QQ$.
\end{theorem}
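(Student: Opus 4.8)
The plan is to expand the intersection size as a sum of indicator functions over cohomology classes, and then average each term separately over the family. For a fixed $E\in\Epsilon$ one has
\[\#\bigcap_{D\in S}\sel{2}(E_D/\QQ)=\sum_{\sigma\in H^1(\QQ,E[2])}\prod_{D\in S}\bbone\big(\sigma\in\sel{2}(E_D/\QQ)\big),\]
where, under the identifications $H^1(\QQ,E_D[2])\cong H^1(\QQ,E[2])$ induced by the $\varphi_D$, membership of $\sigma$ in each twisted Selmer group is an everywhere-local condition on $\sigma$. The zero class lies in every $2$-Selmer group and so contributes the leading term $1$; the substance of the theorem is the assertion that the remaining classes contribute $2\prod_v\delta_{S,v}$ on average. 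I would therefore peel off $\sigma=0$ and reduce to computing the average, over $E\in\Epsilon(X)$, of the number of \emph{nonzero} classes that are everywhere locally soluble for every twist $E_D$ with $D\in S$ simultaneously.

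To access the nonzero classes uniformly as $E$ varies, I would use an integral orbit parametrization of $2$-Selmer elements in the style of Bhargava--Shankar, representing a nonzero class by a suitable $\SL_2(\ZZ)$-equivalence class of integral binary quartic forms whose invariants are pinned to those of $E$ and which is everywhere locally soluble. The new ingredient, relative to the single-curve average, is simultaneity over $S$: after transporting all data into $H^1(\QQ,E[2])$ via the $\varphi_D$, one tracks a single orbit against the local solubility conditions imposed by \emph{all} the twists at once. Counting these representatives with invariants in the homogeneously expanding region $H(E)\le X$ is then a geometry-of-numbers problem, whose main term factors place by place into local volumes; these are exactly the densities $\delta_{S,v}$. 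In the normalisation where each $\delta_{S,v}$ records the conditional probability of simultaneous local solubility, the global count of the relevant soluble orbit types supplies the constant $2$, consistent with the classical case $S=\set{1}$, for which the average nonzero count is $2$ and $\prod_v\delta_{\{1\},v}=1$.

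The passage from the global lattice-point count to the infinite product $\prod_v\delta_{S,v}$ is where the real work lies, and I expect it to be the main obstacle. I would establish it by an Ekedahl-type sieve: the local conditions at distinct places must be shown to decouple, the product of local densities must be shown to converge, and --- most importantly --- one must bound the tail, namely the contribution of classes that are locally soluble at every small prime yet fail to be globally soluble or arise from large invariants. Controlling this tail demands uniform upper estimates for the number of orbits satisfying many congruence conditions, together with a squarefree sieve adapted to the defining constraint on $\Epsilon$ that $\gcd(A^3,B^2)$ be $12$th-power-free. It is precisely this uniformity, rather than any individual local computation, that legitimises interchanging $\lim_{X\to\infty}$ with the infinite product over places and yields the clean closed form $1+2\prod_v\delta_{S,v}$. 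Finally, the strict inequality $1+2\prod_v\delta_{S,v}>1$ follows once positivity of the product is checked: each $\delta_{S,v}$ is strictly positive because the trivial local class always satisfies the conditions, and at all but finitely many $v$ --- those away from $2$, from the primes dividing elements of $S$, and from the primes of bad reduction --- the twists $E_D$ are unramified and the density takes its generic value, ensuring the product converges to a positive limit.
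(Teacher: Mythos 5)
Your proposal follows essentially the same route as the paper: the intersection is cut out by simultaneous local conditions, its nonzero classes are parametrised by integral binary quartic forms that are locally soluble for every twist in $S$ at once (the paper's ``locally $S$-soluble'' forms), and the count is carried out with the Bhargava--Shankar geometry-of-numbers machinery, the main term factoring into the local densities $\delta_{S,v}$. The paper's only real organisational difference is that it first rewrites $\bigcap_{D\in S}\sel{2}(E_D/\QQ)$ as a corestriction Selmer group $\sel{\CCC(K)}(\QQ,E[2])$ for the multiquadratic field $K$ generated by the $\sqrt{D}$, identifying the intersected local conditions with local norm groups $N_{K_w/\QQ_v}E(K_w)/2E(\QQ_v)$; this is what makes the verification at good unramified primes (the acceptability of the weight function, which your sketch correctly flags as the crux) and the positivity of the local densities tractable.
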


This then enables us to show that our Selmer groups meet nontrivially a positive proportion of the time.
\begin{corollary}[\Cref{thm:prob nontrivial is positive}]\label{thm:INTRO prob nontrivial is positive} Let $D$ be a squarefree integer.  Then,
    \[\liminf_{X\to\infty}\frac{\#\set{E\in\Epsilon(X)~:~\sel{2}(E/\QQ)\cap \sel{2}(E_D/\QQ)\neq 0}}{\#\Epsilon(X)}>0.\]
\end{corollary}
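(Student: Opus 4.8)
\emph{Proof strategy.} Fix the squarefree integer $D$ and apply \Cref{thm:INTRO average size of intersection is product of densities} with $S=\{1,D\}$; since $E_{1}=E$ this controls exactly the intersection in question. Set
\[
N(E)=\#\bigl(\sel{2}(E/\QQ)\cap\sel{2}(E_D/\QQ)\bigr),
\]
which is a power of $2$ with $N(E)\ge 1$, and which satisfies $N(E)\ge 2$ precisely when the intersection is nontrivial. Writing $\expect{f}_X=\tfrac{1}{\#\Epsilon(X)}\sum_{E\in\Epsilon(X)}f(E)$ for the average over $\Epsilon(X)$, the theorem gives
\[
\lim_{X\to\infty}\expect{N}_X=1+2\prod_{v\in\places_\QQ}\delta_{S,v}=:L>1,
\]
so that $\expect{N-1}_X\to L-1>0$. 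The quantity to be bounded below is
\[
p(X)=\frac{\#\{\,E\in\Epsilon(X):N(E)\ge 2\,\}}{\#\Epsilon(X)}.
\]

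The plan is to turn this strictly positive first moment of the nonnegative quantity $N-1$ into a positive lower bound for $p(X)$ by a Cauchy--Schwarz (Paley--Zygmund) argument. Since $N-1$ vanishes whenever $N=1$, we have $N-1=(N-1)\bbone_{N\ge 2}$ pointwise, and Cauchy--Schwarz over $\Epsilon(X)$ yields
\[
\expect{N-1}_X=\expect{(N-1)\bbone_{N\ge 2}}_X\le \bigl(\expect{(N-1)^2}_X\bigr)^{1/2}\,p(X)^{1/2}.
\]
Rearranging gives $p(X)\ge \bigl(\expect{N-1}_X\bigr)^2/\expect{(N-1)^2}_X$. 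Hence, provided one can establish a uniform second-moment bound $\limsup_{X\to\infty}\expect{(N-1)^2}_X\le C<\infty$, it follows that
\[
\liminf_{X\to\infty}p(X)\ge \frac{(L-1)^2}{C}>0,
\]
which is the assertion of the corollary.

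The hard part will be the uniform second-moment bound, which amounts to ruling out that the positive first moment is produced by a negligible proportion of curves with anomalously large intersections. Equivalently, since $\expect{(N-1)^2}_X=\expect{N^2}_X-2\expect{N}_X+1$ and the middle term already converges, it suffices to bound $\limsup_{X\to\infty}\expect{N^2}_X$. Here I would exploit the identity
\[
N(E)^2=\#\{\,(\sigma,\tau):\sigma,\tau\in\sel{2}(E/\QQ)\cap\sel{2}(E_D/\QQ)\,\},
\]
so that $\expect{N^2}_X$ is an averaged count of pairs of classes lying in the intersection. As the subgroup $\langle\sigma,\tau\rangle$ then also lies in both Selmer groups, these configurations are governed by the same local conditions that enter \Cref{thm:INTRO average size of intersection is product of densities}, now imposed on two classes rather than one; I would expect the geometry-of-numbers count underlying that theorem to extend and yield $\expect{N^2}_X=O(1)$, the essential input being that higher-dimensional intersections are rare enough. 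Establishing this boundedness uniformly in $X$ is the crux on which the corollary rests.
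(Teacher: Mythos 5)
Your Cauchy--Schwarz (Paley--Zygmund) skeleton is exactly the paper's: there one sets $\cA(X)=\set{E\in\Epsilon(X):N(E)\ge 2}$, applies Cauchy--Schwarz to $N(E)-1=\#\sel{\CCC(K)}(\QQ,E[2])-1$ (the identification of the intersection with the corestriction Selmer group being \Cref{prop:Multiquadratic extn desc of CandF with Twisted Kummers}), and bounds $\#\cA(X)/\#\Epsilon(X)$ below by the square of the first moment divided by the second moment, with the first moment supplied by \Cref{thm:average size of intersection is product of densities}. The gap is in the step you yourself flag as the crux: you leave the uniform bound on the second moment of $N-1$ unproven, proposing to establish it by extending the binary-quartic-form count to pairs of Selmer classes. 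That extension is not a routine elaboration of the first-moment argument --- counting pairs of classes is precisely the (substantially harder) second-moment problem for $2$-Selmer groups --- so as written the proof is incomplete.

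The paper closes this gap with a one-line monotonicity observation that makes any new counting unnecessary: since $\sel{2}(E/\QQ)\cap\sel{2}(E_D/\QQ)$ is a subgroup of $\sel{2}(E/\QQ)$, one has pointwise
\[
0\le N(E)-1\le \#\sel{2}(E/\QQ)-1,
\qquad\text{hence}\qquad
\bigl(N(E)-1\bigr)^2\le\bigl(\#\sel{2}(E/\QQ)-1\bigr)^2,
\]
and the average of the right-hand side over $\Epsilon(X)$ is bounded (by $15$) by the second-moment theorem of Bhargava--Shankar--Swaminathan \cite{swaminathan2021second}*{Theorem 1.1}. Inserting this into your inequality gives $\liminf_{X\to\infty}p(X)\ge (L-1)^2/15>0$ and completes the argument. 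So your route is the right one, but you are missing the key idea (domination of the intersection by the full $2$-Selmer group plus the known second moment) that turns the ``hard part'' into a citation.
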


\noindent However, explicitly determining the local densities $\delta_{S,v}$ we show that this proportion is typically quite small.

\begin{theorem}[\Cref{thm:prob nonzero goes to zero}]\label{thm:INTRO prob nonzero goes to zero}
    Let $D$ be a squarefree integer, then
    \[\limsup_{X\to\infty}\frac{\#\set{E\in\Epsilon(X)~:~\sel{2}(E/\QQ)\cap\sel{2}(E_D/\QQ)\neq 0}}{\#\Epsilon(X)}\ll \braces{\frac{23}{24}}^{\omega(D)},\]
    where the implied constant is independent of $D$.
\end{theorem}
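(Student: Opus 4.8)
The plan is to derive this from \Cref{thm:INTRO average size of intersection is product of densities} by a first-moment argument. Take $S=\{1,D\}$, so that $\bigcap_{d\in S}\sel{2}(E_d/\QQ)=\sel{2}(E/\QQ)\cap\sel{2}(E_D/\QQ)$. Since this intersection is an $\FF_2$-vector space, it is nonzero exactly when it contains a nonzero class, so its indicator function is at most its number of nonzero elements, namely $\#\bigl(\sel{2}(E/\QQ)\cap\sel{2}(E_D/\QQ)\bigr)-1$. Summing over $E\in\Epsilon(X)$, dividing by $\#\Epsilon(X)$ and passing to the limit superior, \Cref{thm:INTRO average size of intersection is product of densities} shows the averaged right-hand side converges, giving
\[\limsup_{X\to\infty}\frac{\#\set{E\in\Epsilon(X)~:~\sel{2}(E/\QQ)\cap\sel{2}(E_D/\QQ)\neq 0}}{\#\Epsilon(X)}\le 2\prod_{v\in\places_\QQ}\delta_{S,v}.\]
Everything now reduces to bounding this product of local densities.

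Next I would factor the product according to whether a place divides $D$. For $v=p\mid D$ the twist $E_D$ is ramified at $p$: even when $E$ has good reduction there, $E_D$ acquires additive reduction, so the local Selmer conditions of $E$ and $E_D$ genuinely differ and their overlap shrinks. Concretely, $\delta_{S,p}$ depends only on local data at $p$ (with $v_p(D)=1$), so it is a function $f(p)$ of $p$ and the square class of $D$ at $p$; the plan is to evaluate it from the definition in \Cref{thm:INTRO average size of intersection is product of densities} --- averaging the $2$-descent local images for $E$ and its ramified twist over $(A,B)$ modulo powers of $p$ and across the reduction types --- and to establish the uniform bound $f(p)\le \tfrac{23}{24}$, this value being the largest that $f$ takes over all primes. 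This explicit $p$-adic computation is the step I expect to be the main obstacle, as it requires a careful case analysis of how the local images of $E(\QQ_p)/2$ and $E_D(\QQ_p)/2$ sit inside $H^1(\QQ_p,E[2])$ when the reduction types of $E$ and $E_D$ disagree, together with the bookkeeping of the resulting intersection.

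Finally, at the places $v\nmid D$ (including $\infty$, and $2$ when $2\nmid D$) the twist $E_D$ is either trivial or unramified over $\QQ_v$, so at a prime $p\nmid 2D$ of good reduction the local conditions for $E$ and $E_D$ coincide and $\delta_{S,p}=1+O(p^{-2})$ uniformly in $D$, the correction coming from the $O(p^{-1})$ proportion of bad-reduction curves. Hence $\prod_{v\nmid D}\delta_{S,v}\le\prod_{v}(1+O(p^{-2}))$ converges to a bound independent of $D$, the finitely many factors at $\infty$ and $2$ being harmlessly absorbed. Combining the two estimates,
\[2\prod_{v\in\places_\QQ}\delta_{S,v}=\Bigl(2\prod_{v\nmid D}\delta_{S,v}\Bigr)\prod_{p\mid D}f(p)\ll\braces{\tfrac{23}{24}}^{\omega(D)},\]
with the implied constant independent of $D$, as required.
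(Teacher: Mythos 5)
Your proposal is correct and follows essentially the same route as the paper: a first-moment bound (you use that the indicator of nontriviality is at most $\#\braces{\sel{2}(E/\QQ)\cap\sel{2}(E_D/\QQ)}-1$, while the paper passes through $\dim\sel{\CCC(\QQ(\sqrt{D}))}(\QQ,E[2])$ via \Cref{cor:Average Size of Cores Selmer for nat ordering and MQ}) combined with the product of local densities from \Cref{thm:average size of intersection is product of densities}, where each prime dividing $D$ contributes a factor bounded by $\tfrac{23}{24}$ and the remaining places contribute a convergent product independent of $D$. The explicit $p$-adic computation you flag as the main obstacle is precisely what the paper carries out in \Cref{sec:LocalDensities} (see \Cref{def:L_v} and \Cref{lem:p-adic mass for cores and all EC}), by importing the local norm-index densities of \cite{PatMQG} rather than recomputing the local $2$-descent images from scratch.
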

\subsubsection{Heuristic}
The result in \Cref{thm:INTRO prob nontrivial is positive} differs from an analogous result for certain quadratic twist families in \cite{MR4400944}*{Theorem 6.1 + Remark 6.3}, where it is shown that there the average size of $\sel{2}(E/\QQ)\cap \sel{2}(E_D/\QQ)$ is exactly $1$.  In this article we also pose a heuristic which both explains these results and makes further predictions for other families of elliptic curves.

We now give a short summary of our heuristic, in lieu of details which can be found in \Cref{sec:Heuristic}.  A standard heuristic model for the behaviour of $\sel{2}(E/\QQ)$ is to model it as an intersection of $2$ random maximal isotropic subspaces in an appropriate quadratic space, as first suggested by \cite{Poonen_2012}.  Following from this, one can naturally interpret $\sel{2}(E/\QQ)\cap\sel{2}(E_D/\QQ)$ as an intersection of $3$ such spaces, and so na\"ively one might model it as though these $3$ spaces are random.  We show that this na\"ive model suggests that the intersection should be trivial with probability $1$, which agrees with the result in \cite{MR4400944}, but does not agree with \Cref{thm:INTRO prob nontrivial is positive}.

We propose that the assumption that we have $3$ \textit{independent} random maximal isotropic subspaces is the flaw with the na\"ive heuristic above.  We give a precise condition for when one should consider these to be independent, which holds for quadratic twist families but not for the family of all elliptic curves.  In some cases where this condition fails we predict that the distribution converges to the na\"ive model as the number of (distinct) prime divisors of $D$ grows, which is true of the family of all elliptic curves as a consequence of \Cref{thm:INTRO prob nonzero goes to zero}.

\subsection{Outline}
In \Cref{sec:CoresSelmerGroups}, we relate the intersections of $2$-Selmer groups of quadratic twists to Selmer groups given by the corestriction Selmer structures.  Following this, in \Cref{sec:SelmerandBQF} we recall the correspondence between $2$-Selmer elements and binary quartic forms, and refine this correspondence for corestriction Selmer groups.

In \Cref{sec:RecallingBS} we recall some necessary results from the pioneering work of Bhargava--Shankar \cite{MR3272925}.  This is used in \Cref{sec:SelmerBundles}, where we first introduce the notion of a $2$-Selmer bundle, which, loosely speaking, is a continuous collection of pairs $(E,\cL)$ where $E/\QQ$ is an elliptic curve and $\cL$ is a Selmer structure on $E[2]$.  We generalise the work of Bhargava--Shankar on counting average sizes of $2$-Selmer groups \cite{MR3272925} to count the average sizes of the associated Selmer groups $\sel{\cL}(\QQ, E[2])$, and conclude the section by applying this to $\cL=\CCC(K)$ to obtain \Cref{thm:INTRO average size of intersection is product of densities} and \Cref{thm:INTRO prob nontrivial is positive}.

In \Cref{sec:LocalDensities} we compute the local densities in \Cref{thm:INTRO average size of intersection is product of densities}, which allows us to deduce \Cref{thm:INTRO prob nonzero goes to zero} and consequently \Cref{thm:INTROTHM imprecise on decomp}.

In \Cref{sec:Heuristic} we present the heuristic model summarised above for the behaviour of the intersection $\sel{2}(E/\QQ)\cap\sel{2}(E_D/\QQ)$ as $E$ varies in some reasonable family of elliptic curves.

\subsection{Acknowledgements}
We would like to thank Bjorn Poonen for a very helpful conversation during the workshop ``Arithmetic, Algebra, and Algorithms'' at the ICMS in Edinburgh.  We are also grateful to Alex Bartel and Adam Morgan for helpful conversations, as well as comments on a previous version of this manuscript.  Part of this work was supported by a Ph.D. scholarship from the Carnegie Trust for the Universities of Scotland.

\subsection{Notation and Conventions}
\subsubsection*{Fields and Cohomology}
Throughout the article, $F$ will often denote a number field, $\Omega_F$ its set of places, and $F_v$ the completion of $F$ at $v\in\places_F$.  We write $G_F$, $G_{F_v}$ for the corresponding absolute Galois groups, having fixed appropriate algebraic closures $\bar{F},\bar{F_v}$, and embeddings $\bar{F}\to\bar{F_v}$, so that $G_{F_v}\subseteq G_F$.

For a field $K$ as above and a (discrete) $G_K$-module $M$, we write $H^1(K,M)$ for the continuous group cohomology $H^1(G_K, M)$.  Additionally, if $K=F_v$ is a local field, then we write $H^1_\nr(F_v,M)$ for the unramified classes in $H^1(F_v,M)$, meaning those which are trivial on restriction to the inertia subgroup $I_v\leq G_{F_v}$.  For a field extension $L/K$, write
\begin{align*}\cor_{L/K}&:H^1(L, M)\to H^1(K,M),&\res_{L/K}&:H^1(K,M)\to H^1(L,M)\end{align*}
for the natural corestriction and restriction maps.
\subsubsection*{Elliptic Curves and Selmer Structures}
$E/F$ will denote an elliptic curve, and for $n\in\ZZ_{>1}$ we use $E[n]$ to denote the $n$-torsion subgroup.

By a Selmer structure on $E[n]$ (over $F$), we will mean a collection $\cL=(\cL_v)_{v\in\places_F}$ of subgroups $\cL_v\leq H^1(F_v, E[n])$ such that for all but finitely many $v\in\places_F$ we have $\cL_v=H^1_\nr(F_v,E[n])$.  Associated to this data, we have a Selmer group 
\[\sel{\cL}(F,E[n])=\set{x\in H^1(F,E[n])~:~\res_v(x)\in \cL_v\ \forall v}\]
where $\res_v:H^1(F,E[n])\to H^1(F_v,E[n])$ is the restriction map above.  Such groups are always finite by a standard argument, and a brief summary of their properties is given in \cite{MR4400944}*{\S3}.
\subsubsection*{Specific Notation}
Below we provide a table of common notations which, where appropriate, indicates where various notations are introduced.   Excluded from this is the specific notation of \Cref{not:FORGETTHIS}, which is solely for use in \Cref{subsec:LocalTheory}.
\begin{center}
\begin{tabular}{|c|c|c|}
    \hline
    \textbf{Notation}&\textbf{Meaning}&\textbf{Location}\\
    \hline
    $\cG_\cA^+\braces{K}$&Average of genus theory&\Cref{lem:genus theory gives independence}\\
    $E_\theta$&Quadratic twist of $E$ by $\theta$&\Cref{def:quadtwistisom}\\
    $\varphi_\theta$&isomorphism $E_\theta\to E$, defined over field extension by $\sqrt{\theta}$&\Cref{def:quadtwistisom}\\
    $\SSS_v^{(\theta)}(F;E)$&twisted Kummer image in $H^1(F_v,E[2])$&\Cref{def:twistedKummer}\\
    $\CCC_v(K/F;E)$& corestriction Selmer conditions&\Cref{def:cores selmer}\\
    $\sel{\CCC(K)}(F,E[2])$&corestriction Selmer group&\Cref{def:cores selmer}\\
    $V_R,V_{R}^{(i)}$&spaces of binary quartic forms over $R$&\Cref{def:V_Z^i}\\
    $I(E),J(E)$&associated invariants to binary quartic form&\Cref{def:I(E) and J(E)}\\
    $\cL$&$2$-Selmer bundle&\Cref{def:2selbundle}\\
    $\mu_p(S)$&$p$-adic density of set $S\subseteq \ZZ_p^n$&-\\
    $\Epsilon_p$&$p$-adic conditions on family of all elliptic curves&\Cref{not:all EC}\\
    $\Epsilon$&family of all elliptic curves over $\QQ$&\Cref{eq:epsilon}\\
    $\Epsilon(X)$&subset of $\Epsilon$ of height at most $X$&\Cref{eq:epsilon}\\
    $\cF$&a large family of elliptic curves&\Cref{def:large family}\\
    $\Inv(\cF), \Inv_v(\cF)$&Spaces of invariants for $\cF$&\Cref{not:invariants for large family}\\
    $\cG_\cA^+$&upper average of genus theory invariant&\Cref{lem:genus theory gives independence}\\
    \hline
\end{tabular}
\end{center}

We write $\omega(n)$ for the number of distinct prime divisors of $0\neq n\in\ZZ$.
\section{Corestriction Selmer Groups}\label{sec:CoresSelmerGroups}
We begin with an alternative interpretation of intersections of $2$-Selmer groups in terms of the corestriction map.  This will enable us to perform density computations later in the paper.  Throughout the section, we let $F$ be a number field.

\begin{definition}\label{def:quadtwistisom}
    Let $E/F$ be an elliptic curve.  Recall that for each $\theta\in F^\times/F^{\times2}$ there is an $F(\sqrt{\theta})$ isomorphism $E_\theta\cong E$, where the former is the quadratic twist of $E$ by $\theta$.  If we are given Weierstrass equations
    \begin{align*}
        E&:y^2=x^3+Ax+B\\
        E_\theta&:\theta y^2=x^3+Ax+B
    \end{align*}
    then the map is
    \begin{align*}
        E_\theta&\to E\\
        (x,y)&\mapsto (x,\sqrt{\theta}y)
    \end{align*}

    If $\theta$ is not the trivial class, then clearly this isomorphism maps the points of $E_\theta(F)$ exactly to those of $E(F(\sqrt{\theta}))$ which are acted on by $-1$ by the generator of $\gal(F(\sqrt{\theta})/F)$, and so in particular it restricts to an isomorphism of $G_F$-modules 
    \[\varphi_\theta:E_\theta[2]\cong E[2].\]
\end{definition}
\subsection{Twisted Kummer Images}\label{subsec:TwistedKummerImages}
The map $\varphi_\theta$ gives rise to various twisted Kummer images locally, which will cut out $\sel{2}(E_\theta/\QQ)\subseteq H^1(F, E[2])$.

\begin{definition}\label{def:twistedKummer}
    For elliptic curve $E/F_v$ and each $\theta\in F_v^\times/F_v^{\times2}$, we write $\SSS_v^{(\theta)}(F;E)$ for the associated twisted Kummer image.  That is, the image of
    \[\begin{tikzcd}
    E_\theta(F_v)/2E_\theta(F_v)\arrow[r, "\delta_\theta"]&
    H^1(F_v, E_\theta[2])\arrow[r, "(\varphi_\theta)^*" above, "\sim" below]&
    H^1(F_v, E[2]),
    \end{tikzcd}\]
    where $\delta_\theta$ is the usual connecting map from the short exact sequence induced by multiplication by $2$ on $E_\theta$.  When $\theta$ is the trivial class, we will abbreviate $\SSS_v(F;E):=\SSS_v^{(1)}(F;E)$.
\end{definition}
\noindent With respect to the local Tate pairing, these local groups are self-dual.
\begin{lemma}\label{lem:twisted Kummer image is selfdual}
    For every place $v$ of $F$, each elliptic curve $E/F_v$ and each $\theta\in F_v^\times/F_v^{\times2}$, the twisted Kummer image $\SSS_v^{(\theta)}(F;E)\subseteq H^1(F_v, E[2])$ is its own orthogonal complement with respect to the local Tate pairing.
\end{lemma}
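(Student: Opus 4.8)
The plan is to reduce the twisted statement to the classical untwisted one and then settle that by a dimension count. First I would unwind the definition: by construction $\SSS_v^{(\theta)}(F;E)=(\varphi_\theta)^*\bigl(\im\delta_\theta\bigr)$, where $\im\delta_\theta\subseteq H^1(F_v,E_\theta[2])$ is the image of the ordinary local Kummer map of the elliptic curve $E_\theta/F_v$. The crucial observation is that $(\varphi_\theta)^*$ is an \emph{isometry} for the local Tate pairings. Indeed, that pairing is cup product followed by the Weil pairing $e_2$ and the invariant map $\Br(F_v)[2]\cong\tfrac12\ZZ/\ZZ$; since $\varphi_\theta$ is a genuine isomorphism of elliptic curves over $F(\sqrt\theta)$ it has degree one and hence preserves the Weil pairing exactly, i.e. $e_2^{E}(\varphi_\theta S,\varphi_\theta T)=e_2^{E_\theta}(S,T)$ for all $S,T\in E_\theta[2]$. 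This identity, valid over $\bar F$, is precisely what forces the induced isomorphism $(\varphi_\theta)^*$ on $H^1$ to carry the Tate pairing on $H^1(F_v,E_\theta[2])$ to that on $H^1(F_v,E[2])$. An isometry sends a subspace equal to its own orthogonal complement to another such subspace, so it suffices to prove the lemma for trivial $\theta$, i.e. for the ordinary local Kummer image of an arbitrary elliptic curve over $F_v$.

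For the untwisted statement I would argue in two steps: isotropy and maximality. The image $\im\delta$ is isotropic for the Tate pairing — this is the well-known compatibility of the Weil pairing with the connecting homomorphism of $0\to E[2]\to E\xrightarrow{2}E\to 0$, and I would cite it (it ultimately rests on exhibiting $\delta(P)\cup\delta(Q)$ as a coboundary through the functions defining $e_2$, cf. \cite{MR4400944}*{\S3}) rather than recompute the cup product. For maximality I would run a dimension count. Local Tate duality makes $H^1(F_v,E[2])$ self-dual and gives $\#H^2(F_v,E[2])=\#H^0(F_v,E[2])$, while the local Euler characteristic formula yields $\#H^0\cdot\#H^2/\#H^1=\|4\|_v$; combining these gives $\#H^1(F_v,E[2])=\bigl(\#H^0\cdot\|2\|_v^{-1}\bigr)^2$. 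On the other hand the standard formula for the local Mordell--Weil group reads $\#\bigl(E(F_v)/2E(F_v)\bigr)=\#E(F_v)[2]\cdot\|2\|_v^{-1}=\#H^0(F_v,E[2])\cdot\|2\|_v^{-1}$. As $\delta$ is injective, $\#\im\delta$ equals this last quantity, so $(\#\im\delta)^2=\#H^1(F_v,E[2])$. An isotropic subspace of exactly half the dimension must coincide with its own orthogonal complement, which completes the untwisted case and, via the reduction, the lemma.

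I expect the only genuinely non-formal input to be the isotropy of the Kummer image; the rest is bookkeeping with standard duality and Euler-characteristic facts, valid uniformly across all places (with the customary archimedean conventions for $\|\cdot\|_v$ and for the Euler characteristic). The single point in the reduction I would state with care is that $\varphi_\theta$ \emph{preserves}, rather than merely scales, the Weil pairing; this is automatic since it is an isomorphism of degree one, but it is the hinge that makes $(\varphi_\theta)^*$ an isometry and thereby lets the twisted case inherit self-duality from the untwisted one.
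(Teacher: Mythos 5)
Your proposal is correct and follows essentially the same route as the paper: the paper's proof likewise reduces to the untwisted case by observing that $\varphi_\theta$, being an $F_v(\sqrt{\theta})$-isomorphism, preserves the Weil pairing, and then invokes Tate's local duality for the ordinary Kummer image. The only difference is that you unpack that citation (isotropy plus the Euler-characteristic dimension count), which is exactly the standard proof of the cited theorem.
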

\begin{proof}
    Note that since $\varphi_\theta$ is an $F_v(\sqrt{\theta})$-isomorphism, the Weil pairing on $E_\theta[2]$ is preserved by $\varphi_\theta$ and so this follows from the case when $\theta$ is the trivial class which, in turn, follows from Tate's local duality \cite{MR0175892}*{Theorem 2.3}.
\end{proof}

We now note that these local groups do, in fact, define Selmer structures.
\begin{proposition}
    Let $E/F$ be an elliptic curve, $\theta\in F^\times/F^{\times2}$, and consider the collection $\SSS^{(\theta)}(F;E):=\set{\SSS_v^{(\theta)}(F;E)}_{v\in\places_F}$.  Then for all but finitely many $v\in\places_F$ we have
    \[\SSS_v^{(\theta)}=H^1_\nr(F,E[2]).\]
    In particular $\SSS^{(\theta)}(F,E[2])$ defines a Selmer structure for $E[2]$, and moreover
    \[\sel{\SSS^{(\theta)}}(F,E[2])=\varphi_\theta^*\braces{\sel{2}(E_\theta/F)}.\]
\end{proposition}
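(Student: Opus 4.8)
The plan is to treat the two assertions separately, in both cases transporting standard facts about $E_\theta$ across the isomorphism $\varphi_\theta\colon E_\theta[2]\cong E[2]$ furnished by \Cref{def:quadtwistisom}. The observation to keep in view throughout is that, although $\varphi_\theta$ is only an isomorphism of curves over $F(\sqrt{\theta})$, its restriction to the $2$-torsion is already an isomorphism of $G_F$-modules; consequently the induced maps $\varphi_\theta^*$ on $H^1(F,-)$ and on each $H^1(F_v,-)$ are genuine isomorphisms which, by functoriality, commute with every restriction map $\res_v$.

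For the first assertion I would begin from the standard fact that an elliptic curve with good reduction at a place $v\nmid 2$ has local Kummer image equal to the unramified subgroup; applied to $E_\theta$ this gives $\SSS_v(F;E_\theta)=H^1_\nr(F_v,E_\theta[2])$. One then notes that $E_\theta$ has good reduction at all but finitely many $v$, the exceptions lying among $v\mid 2$, the places of bad reduction of $E$, and the finitely many places at which $\theta$ has odd valuation, so this identification holds outside a finite set. It remains to push it through $\varphi_\theta^*$: since $\varphi_\theta$ is an isomorphism of $G_{F_v}$-modules it commutes with restriction to the inertia subgroup $I_v$, and hence carries $H^1_\nr(F_v,E_\theta[2])$ isomorphically onto $H^1_\nr(F_v,E[2])$. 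As $\SSS_v^{(\theta)}(F;E)=\varphi_\theta^*\bigl(\SSS_v(F;E_\theta)\bigr)$ by \Cref{def:twistedKummer}, this yields $\SSS_v^{(\theta)}=H^1_\nr(F_v,E[2])$ on the same cofinite set of places, and the claim that $\SSS^{(\theta)}(F;E)$ defines a Selmer structure is then immediate from the definition.

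For the equality of Selmer groups I would argue by a short diagram chase. Because $\varphi_\theta^*\colon H^1(F,E_\theta[2])\to H^1(F,E[2])$ is an isomorphism, each $x\in H^1(F,E[2])$ is uniquely $\varphi_\theta^*(c)$ for some $c\in H^1(F,E_\theta[2])$. Functoriality of restriction gives $\res_v(x)=\varphi_\theta^*\bigl(\res_v(c)\bigr)$ for every $v$, and since $\varphi_\theta^*$ is injective and $\SSS_v^{(\theta)}(F;E)=\varphi_\theta^*\bigl(\SSS_v(F;E_\theta)\bigr)$, the condition $\res_v(x)\in\SSS_v^{(\theta)}(F;E)$ is equivalent to $\res_v(c)\in\SSS_v(F;E_\theta)$. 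Quantifying over all $v$ shows that $x\in\sel{\SSS^{(\theta)}}(F,E[2])$ if and only if $c\in\sel{2}(E_\theta/F)$, which is precisely the asserted identity $\sel{\SSS^{(\theta)}}(F,E[2])=\varphi_\theta^*\bigl(\sel{2}(E_\theta/F)\bigr)$.

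The only genuine subtlety, and really the single load-bearing point, is the one flagged at the outset: that $\varphi_\theta$ descends to a $G_F$-module (hence $G_{F_v}$-module) isomorphism on $2$-torsion even though it does not descend as a morphism of curves. Everything else is formal, being either the cited good-reduction computation of $\SSS_v(F;E_\theta)$ or the functoriality of restriction. I would therefore expect no real difficulty beyond stating this compatibility carefully and invoking the good-reduction fact with the correct finite set of bad places.
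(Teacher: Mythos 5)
Your proposal is correct and follows essentially the same route as the paper: identify the twisted Kummer image with $\varphi_\theta^*$ of the Kummer image of $E_\theta$, invoke the good-reduction identification of the Kummer image with the unramified subgroup at all but finitely many places, and use that the $G_{F_v}$-module isomorphism $\varphi_\theta$ preserves unramified classes. The paper treats the Selmer-group equality as immediate, whereas you spell out the diagram chase; this is harmless extra detail, not a different argument.
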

\begin{proof}
    The only thing which needs to be checked is the first claim, as the rest is immediate.

    For all but finitely many $v\in\places_F$ we have two equalities
    \begin{align*}
        \SSS_v(F;E)&=H^1_{\nr}(F_v, E[2]),& \SSS_v^{(\theta)}(F;E)&=\varphi_\theta^*\left(H^1_{\nr}(F_v, E_\theta[2])\right).
    \end{align*}
    Since Galois isomorphisms of modules map unramified classes to unramified classes, $\SSS_v(F;E)=\SSS_v^{(\theta)}(F;E)=H^1_{\nr}(F_v, E[2])$ for such $v$
\end{proof}

\subsection{Local Theory}\label{subsec:LocalTheory}
We take some new notation for the rest of this section.
\begin{notation}\label{not:FORGETTHIS}
    Let $K/F$ be a multiquadratic extension.  Let $v\in\places_F$, and assume that $w\in\places_K$ is a place extending $v$.  Write $G_v:=\gal(K_w/F_v)$.  Moreover, assume that the local degree is $[K_w:F_v]=2^r$, write $K_w=F_v(\sqrt{\theta_1},\dots,\sqrt{\theta_r})$ for some $\theta_i\in F_v$, and let $S:=\gp{\theta_1,\dots,\theta_r}\leq F_v^\times/F_v^{\times 2}$.  We denote $K_{w,i}:=F_v(\sqrt{\theta_i})$.  Let $E/F_v$ be an elliptic curve with a choice of minimal discriminant $\Delta_{E}$.  
\end{notation}

We now prove some useful lemmata, before going on to provide the main proposition for this subsection.
\begin{lemma}\label{lem:Galstructs for tors}
    With notation as in \Cref{not:FORGETTHIS}, exactly one of the following holds.
    \begin{itemize}
        \item[1.] $E(K_w)[2]=E(F_v)[2]$.
        \item[2.] $\Delta_E\in K_w^{\times2}\backslash F_v^{\times 2}$ and, writing $H_v=\gal(K_w/F_v(\sqrt{\Delta_E}))$, there is an isomorphism of $\FF_2[G_v]$--modules $E(K_w)[2]\cong_{\FF_2[G_v]}\FF_2[G_v/H_v]$.
    \end{itemize}
\end{lemma}
\begin{proof}
    The nontrivial $2$-torsion points on $E$ over the algebraic closure are precisely the points which have $x$-coordinates which are roots of the cubic polynomial $f(x)$.  In particular, the only situation in which $E(F_v)[2]\neq E(K_w)[2]$ must be when $\dim_{\FF_2}E(F_v)[2]=1$ and $\dim_{\FF_2}E(K_w)[2]=2$.  In this latter case, the extra $2$-torsion point is obtained over the quadratic extension $F_v(\sqrt{\Delta_E})$.  Then $G_v$ acts on $E(K_w)[2]$ via the quotient $G_v/H_v$, and this module must be $2$-dimensional with a $1$-dimensional fixed space and so (by e.g. \cite{alperin_1986}*{page 24}) there is precisely one isomorphism class of module to which it can belong:  that of $\FF_2[G_v/H_v]$.
\end{proof}

\begin{lemma}\label{lem:H1ofMultiQuadIsSumOfQuadH1}
    With notation as in \Cref{not:FORGETTHIS}, we have that
    \[H^1(K_w/F_v, E(K)[2])=\oplus_{i=1}^r\infl_i\left(H^1\left(K_{w,i}/F_v, E(K_{w,i})[2]\right)\right)\]
    where the direct sum here is the internal sum of $\FF_2$-vector spaces, and 
    \[\inf_i:H^1(K_{w,i}/F_v, E(K_{w,i})[2])\to H^1(K_w/F_v, E(K_{w})[2])\]
    denotes the usual inflation map from $K_{w,i}$ to $K_w$.
\end{lemma}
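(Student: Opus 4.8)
The plan is to compute $H^1(K_w/F_v, E(K_w)[2]) = H^1(G_v, E(K_w)[2])$ directly, splitting into the two module-structure cases furnished by \Cref{lem:Galstructs for tors}. First I would fix a Kummer-theoretic basis of the character group: let $\chi_i \in \Hom(G_v, \FF_2)$ denote the character cutting out $K_{w,i}$, equivalently the inflation of the nontrivial character of $\gal(K_{w,i}/F_v)$. Since $\theta_1, \dots, \theta_r$ are independent in $F_v^\times/F_v^{\times 2}$, the set $\{\chi_1, \dots, \chi_r\}$ is an $\FF_2$-basis of $\Hom(G_v, \FF_2)$, and I will use throughout that inflation is injective by inflation--restriction. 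In Case 1, where $E(K_w)[2] = E(F_v)[2]$ carries the trivial $G_v$-action, one has $H^1(G_v, E(K_w)[2]) = \Hom(G_v, \FF_2) \otimes_{\FF_2} E(F_v)[2]$, and each $\infl_i$ contributes exactly $\langle \chi_i\rangle \otimes E(F_v)[2]$ (as $E(K_{w,i})[2] = E(F_v)[2]$ is also trivial); because the $\chi_i$ form a basis, these subspaces are independent and span, giving a genuine direct sum.

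The substance is Case 2, where $E(K_w)[2] \cong \FF_2[G_v/H_v]$ with $H_v = \gal(K_w/F_v(\sqrt{\Delta_E}))$ of index two. Here I would exploit the canonical short exact sequence of $G_v$-modules $0 \to \FF_2 \xrightarrow{N} \FF_2[G_v/H_v] \to \FF_2 \to 0$, where $N$ sends $1$ to the sum of the two cosets, the final arrow is the augmentation, and both outer terms are trivial. Its long exact sequence in $G_v$-cohomology, combined with the dimension count $\dim H^1(G_v, \FF_2[G_v/H_v]) = \dim H^1(H_v, \FF_2) = r-1$ coming from Shapiro's lemma, shows that the connecting map $\FF_2 = (\FF_2)^{G_v} \to H^1(G_v, \FF_2)$ is injective with image $\langle \chi_{\Delta_E}\rangle$, and that $N$ induces a surjection $H^1(G_v, \FF_2) \to H^1(G_v, E(K_w)[2])$ with kernel $\langle \chi_{\Delta_E}\rangle$. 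This yields a canonical isomorphism $H^1(G_v, E(K_w)[2]) \cong \Hom(G_v, \FF_2)/\langle \chi_{\Delta_E}\rangle$, where $\chi_{\Delta_E}$ is the character cutting out $F_v(\sqrt{\Delta_E})$, i.e. $\sum_{i \in I}\chi_i$ when $\Delta_E \equiv \prod_{i \in I}\theta_i$ modulo squares.

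It then remains to identify the inflation images under this isomorphism, which is the point requiring the most care. For each $i$ with $\theta_i \neq \Delta_E$, the subgroup $N_i = \gal(K_w/K_{w,i})$ is not contained in $H_v$, so $E(K_{w,i})[2] = E(K_w)[2]^{N_i}$ is the line spanned by the norm element, namely the image of $N$; chasing the definitions shows $\infl_i$ factors through $N$ and corresponds to $\chi_i \bmod \chi_{\Delta_E}$. For the at most one index with $\theta_i = \Delta_E$, the module $E(K_{w,i})[2]$ is the regular representation, whose $H^1$ vanishes and which therefore contributes nothing. Since $\{\chi_1, \dots, \chi_r\}$ spans $\Hom(G_v, \FF_2)$, the classes $\chi_i \bmod \chi_{\Delta_E}$ span the quotient, so the inflation images sum to all of $H^1(K_w/F_v, E(K_w)[2])$, as claimed. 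The main obstacle is precisely this bookkeeping in Case 2: when $\Delta_E$ is a product of two or more of the chosen generators $\theta_i$, the relation $\sum_{i \in I}\chi_i \equiv 0$ shows the inflation images are not independent, so the symbol $\oplus$ must be read (as the statement indicates) as the internal sum of subspaces rather than an internal direct sum.
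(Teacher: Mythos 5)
Your proof is correct, and in the substantive case it takes a genuinely different route from the paper's. The paper also splits along \Cref{lem:Galstructs for tors} and handles the trivial-action case exactly as you do; but when $E(K_w)[2]\cong\FF_2[G_v/H_v]$ it first normalises the generators so that $\theta_1=\Delta_E$, identifies $H^1(K_w/F_v,E(K_w)[2])$ with $H^1(K_w/K_{w,1},\FF_2)=\Hom(H_v,\FF_2)$ via Shapiro's lemma, transfers each $\infl_i$ ($i\geq 2$) across a commutative square to the inflation from the quadratic subextension $K_{w,i}\cdot K_{w,1}/K_{w,1}$, and finishes with the elementary decomposition of a group of homomorphisms, the $i=1$ term vanishing again by Shapiro. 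You instead stay on $G_v$ and present $H^1(G_v,\FF_2[G_v/H_v])$ as the quotient $\Hom(G_v,\FF_2)/\gp{\chi_{\Delta_E}}$ via the long exact sequence of the norm sequence $0\to\FF_2\to\FF_2[G_v/H_v]\to\FF_2\to 0$, using Shapiro only for the dimension count that forces $N_*$ to be surjective, and then read off each inflation image as $\chi_i\bmod\chi_{\Delta_E}$; all the individual claims here (the image of the connecting map, the identification of $E(K_{w,i})[2]$ with the norm line when $\theta_i\not\equiv\Delta_E$, the vanishing for $\theta_i\equiv\Delta_E$) check out. The two presentations are canonically the same space, since restriction to $H_v$ identifies $\Hom(G_v,\FF_2)/\gp{\chi_{\Delta_E}}$ with $\Hom(H_v,\FF_2)$, so the computations agree. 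What your route buys is that it works with the generators $\theta_1,\dots,\theta_r$ exactly as fixed in \Cref{not:FORGETTHIS} rather than re-choosing them: it makes explicit that when $\Delta_E\equiv\prod_{i\in I}\theta_i$ with $\#I\geq 2$ the inflation images satisfy the single relation $\sum_{i\in I}\chi_i\equiv 0$ and the sum is internal but not direct, whereas after the paper's normalisation the sum is genuinely direct. This distinction is immaterial for the downstream use in \Cref{prop:local corestriction images are intersections of twisted Kummers}, where one sums over all $\theta\in S$, and the paper's route is the shorter of the two since no connecting map needs to be computed.
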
 
\begin{proof}
    Without loss of generality, if $F_v(\sqrt{\Delta_E})/F_v$ is a quadratic extension contained in $K$, then we assume $\theta_1=\Delta_E$.

    If $E(F_v)[2]=E(K_w)[2]$ then the $\gal(K_w/F_v)$ action is trivial on $E(K_w)[2]$, and so the cohomology groups in the statement are just $\hom(\gal(K_w/F_v), E(K_w)[2])$ and similarly for the $K_{w,i}$.  Moreover the inflation maps are simply given by extension of homomorphisms, and so the result is clear.

    By \Cref{lem:Galstructs for tors}, it remains to consider the case that $\Delta_E\in K_w^{\times2}\backslash F_v^{\times2}$ and \newline$E(K_w)[2]\cong_{\FF_2[G_v]} \FF_2[G_v/H_v]$ where 
    \[G_v:=\gal(K_w/F_v)\geq \gal(K_w/K_{w,1})=\gal\left(K_w/F_v(\sqrt{\Delta_E})\right)=:H_v.\]
    In this case we have a commutative diagram for each $2\leq i\leq r$
    \[\begin{tikzcd}
        H^1(K_w/F_v,\ E(K_w)[2])\ar{r}{\sim}&H^1(K_w/K_{w,1},\ \FF_2)\\
        H^1(K_{w,i}/F_v,\ E(K_{w,i})[2])\ar{r}{\sim}\ar{u}{\infl_i}&H^1(K_{w,i}\cdot K_{w,1}/K_{w,1},\ \FF_2)\ar{u}{\infl_i},
    \end{tikzcd}\]
    where the vertical maps are the natural inflation maps, the lower horizontal is induced by the natural isomorphism $\gal(K_{w,i}\cdot K_{w,1}/K_{w,1})\cong \gal(K_{w,i}/F_v)$ and the fact that $E(K_{w,i})[2]=E(F_v)[2]\cong \FF_2$, and the top horizontal is the isomorphism of Shapiro's lemma (as in, for example, \cite{neukirch2013class}*{Theorem 4.19}).  That this diagram commutes is clear from the definitions of the maps.  Clearly (as in the previous case), we have
    \[H^1(K_w/K_{w,1},\ \FF_2)= \oplus_{i=2}^r\infl_{i}\left(H^1(K_{w,i}\cdot K_{w,1}/K_{w,1},\ \FF_2)\right),\]
    since both are just groups of homomorphisms, and so in particular by the diagram above
    \[H^1(K_w/F_v,\ E(K_w)[2])\cong \oplus_{i=2}^r\infl_i\left(H^1(K_{w,i}/F_v,\ E(K_{w,i})[2])\right).
    \]
    Noting that $E(K_w)[2]\cong_{\FF_2[G_v]} \FF_2[G_v/H_v]$ we have from Shapiro's Lemma that
    \[H^1(K_{w,1}/F_v, E(K_{w,1})[2])=0,\]
    and so the result follows.
\end{proof}

\begin{lemma}\label{lem:points that are sums of the quadratic points}
    Set notation as in \Cref{not:FORGETTHIS}.  Let $\delta:E(K_w)\to H^1(K_w, E[2])$ be the connecting map arising from the multiplication by $2$ exact sequence on $E$.  For every point $P\in E(K_w)$ such that $\delta(P)\in \textup{Im}(\res_{K_w/F_v}:H^1(F_v, E[2])\to H^1(K_w, E[2]))$, there is a tuple $(P_\theta)\in \prod_{\theta\in S}E_\theta(F_v)$ such that
    \[P=\sum_{\theta\in S}\varphi_{\theta}(P_\theta).\]
\end{lemma}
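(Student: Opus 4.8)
The plan is to reformulate the hypothesis cohomologically and then split $P$ into eigencomponents for the $G_v$-action. First I would record the eigenspace description of the target: writing $M:=\sum_{\theta\in S}\varphi_\theta(E_\theta(F_v))$, for each $\theta\in S$ the image $\varphi_\theta(E_\theta(F_v))$ is exactly the eigenspace $E(K_w)^{\chi_\theta}=\{Q: gQ=\chi_\theta(g)Q\ \forall g\in G_v\}$, where $\chi_\theta\in\hat G_v$ is the character cut out by $\theta$ under the identification $S\cong\hat G_v$ (for $\theta$ trivial this is $E(F_v)$). So the assertion is precisely that $P\in M=\sum_{\chi\in\hat G_v}E(K_w)^{\chi}$.

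The key first step is to translate $\delta(P)\in\im(\res_{K_w/F_v})$ into the existence of a $1$-cocycle $\beta\in Z^1(G_v,E(K_w))$ with $(g-1)P=2\beta(g)$ for all $g\in G_v$. Indeed, choose $R\in\bar F_v$-points with $2R=P$, so that $\delta(P)$ is represented by $\tau\mapsto\tau R-R$ on $G_{K_w}$; lying in the image of restriction means, after translating $R$ by a $2$-torsion point (which does not change $P=2R$), that this cocycle is the restriction of some $\zeta\in Z^1(G_{F_v},E[2])$. Then $\sigma\mapsto\sigma R-R-\zeta(\sigma)$ satisfies the cocycle identity, obeys $2(\sigma R-R-\zeta(\sigma))=\sigma P-P$, and vanishes on $G_{K_w}$, hence descends to the desired $\beta\in Z^1(G_v,E(K_w))$; conversely any such $\beta$ rebuilds a class in $\im(\res)$, so the two conditions are equivalent.

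When $r=1$, say $G_v=\langle\sigma\rangle$, this already finishes the proof: the cocycle relation forces $\sigma\beta(\sigma)=-\beta(\sigma)$, so $-\beta(\sigma)$ lies in the anti-invariant eigenspace while $P+\beta(\sigma)$ is $\sigma$-invariant, i.e. in $E(F_v)$, giving $P=(P+\beta(\sigma))+(-\beta(\sigma))\in M$. For general $r$ I would induct by peeling off one generator $\sigma_1$: setting $P_1:=P+\beta(\sigma_1)$ lands in $E(K_w)^{\sigma_1}=E(L)$ for $L=K_w^{\sigma_1}$, while the complementary piece $-\beta(\sigma_1)$ is $\sigma_1$-anti-invariant and so, via $\varphi_{\theta_1}$, is a point of $E_{\theta_1}(L)$. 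One then applies the inductive hypothesis over $L/F_v$ to $P_1$ (for $E$) and to $\beta(\sigma_1)$ (for the twist $E_{\theta_1}$), reassembling the eigencomponents through the composition $\varphi_{\theta_1}\circ\varphi^{(E_{\theta_1})}_\eta=\varphi_{\theta_1\eta}$; the decomposition of $H^1$ in \Cref{lem:H1ofMultiQuadIsSumOfQuadH1} is the natural bookkeeping device for checking that the hypothesis descends through each quadratic layer.

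The main obstacle is the inductive step itself. Producing the half-cocycle over $\gal(L/F_v)$ amounts to dividing expressions such as $(\sigma_2-1)P_1=(1+\sigma_1)\beta(\sigma_2)$ by $2$ inside $E(L)$, and this $2$-divisibility is genuinely \emph{false} at the level of abstract $\FF_2[G_v]$-modules: in the regular representation $\ZZ[G_v]$ there are points of the form $2D$ that do not lie in $\sum_\chi E(K_w)^\chi$, even though they manifestly admit a cocycle $\beta=(g-1)D$. Thus the argument cannot be purely group-cohomological and must use the structure of $E(K_w)$ as a group of local points. I expect the clean input to be the formal group: for $v\nmid 2\infty$ the formal group $\hat E(\mathfrak{m}_{K_w})$ is uniquely $2$-divisible, so the idempotents $2^{-r}\sum_{g}\chi(g)g$ act on it and split it into eigenspaces outright, reducing matters to the finite residue-field quotient on which $G_v$ acts through its unramified (cyclic) quotient. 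The finitely many places above $2$ (and the archimedean place, where $r\le 1$) are where the requisite $2$-divisibility has to be established by hand, and I expect that to be the technical heart of the lemma.
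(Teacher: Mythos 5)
Your cocycle reformulation of the hypothesis is correct and, for $r\le 1$, your argument is essentially the paper's in different clothing: the paper produces your $\beta(\sigma)$ by observing that $\delta_M(N_{K_w/M}P)=\cor_{K_w/M}\circ\res_{K_w/M}(\res_{M/F_v}x)=2\res_{M/F_v}(x)=0$, hence $N_{K_w/M}(P)\in\ker(\delta_M)=2E(M)$, and then splits $P=Q+R$ into the $\sigma$-invariant half $Q$ of the norm and the anti-invariant remainder $R=\varphi_\theta(P_\theta)$. Note where the ``division by $2$'' actually comes from: not from the formal group, but from exactness of the Kummer sequence over the intermediate field ($\ker\delta_M=2E(M)$) together with $\cor\circ\res=2=0$ on $H^1(M,E[2])$. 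That is precisely the non-abstract-module input you correctly sense is needed. The paper then recurses on $Q\in E(M)$ and $P_\theta\in E_\theta(M)$ over the degree $2^{r-1}$ extension $M/F_v$, using $\varphi_\theta\circ\varphi_{\theta'}=\varphi_{\theta\theta'}$ to reassemble the eigencomponents --- the same bookkeeping you describe.

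The genuine gap is that you do not close the case $r\ge 2$, which is the only case where the lemma goes beyond Kramer and the only reason it is in the paper. You correctly isolate the obstruction (halving expressions like $(1+\sigma_1)\beta(\sigma_2)$ inside $E(M)$, equivalently verifying that $Q$ and $P_\theta$ again satisfy the restriction hypothesis over $M/F_v$), but the proposed repair is not carried out: you explicitly defer the places above $2$, and these are exactly the places where $r$ can equal $2$ or $3$ and where the corestriction conditions actually differ from the $2$-Selmer conditions, so they cannot be waved away as a technicality. Moreover, even at $v\nmid 2\infty$ the formal-group reduction does not dispose of the problem: splitting off the uniquely $2$-divisible subgroup $\hat{E}(\mathfrak{m}_{K_w})$ leaves a finite quotient, and the divisibility obstruction you yourself exhibit already lives in finite $\FF_2[G_v]$-modules, so ``reducing to the residue-field quotient'' is not a reduction at all. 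To finish along the paper's lines you should instead iterate the norm/Kummer-sequence argument through a chain of quadratic subextensions, checking at each stage that the two pieces $Q$ and $P_\theta$ inherit the hypothesis $\delta(\,\cdot\,)\in\im(\res_{M/F_v})$ (the paper asserts this inheritance; it is the step your analysis shows requires care, and \Cref{lem:H1ofMultiQuadIsSumOfQuadH1} together with Kramer's Lemma~3 is what controls the ambiguity coming from $\ker(\res_{K_w/M})$). As it stands, your write-up proves the lemma only for $[K_w:F_v]\le 2$.
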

\begin{proof}
    We will prove this by induction on $r=\dim_{\FF_2}\gal(K_w/F_v)$.  If $r=0$ then the claim is trivial.  Assuming that $r>0$, let $M$ be an intermediate field such that $[K_w:M]=2$ and write $K_w=M(\sqrt{\theta})$ for some $\theta\in S$.  Let $x\in H^1(F_v, E[2])$ be such that $\delta(P)=\res_{K_w/F_v}(x)$. Observe that the commutative diagram
    \[\begin{tikzcd}
        E(K_w)\ar{r}{\delta}\ar{d}{N_{K_w/M}}&H^1(K_w, E[2])\ar{d}{\cores_{K_w/M}}\\
        E(M)\ar{r}{\delta}&H^1(M, E[2]),
    \end{tikzcd}\]
    shows that 
    \[\delta(N_{K_w/M}(P))=\cores_{K_w/M}\circ\res_{K_w/M}\circ\res_{M/F_v}(x)=2\res_{M/F_v}(x).\]
    Since $H^1(M, E[2])$ is a $2$-torsion group we have that $N_{K_w/M}(P)\in \ker(\delta)=2E(M)$, so let $Q\in E(M)$ be a point such that $2Q=N_{K_w/M}(P)$.  Let $R:=P-Q$, and let $\sigma\in \gal(K_w/M)$ be the nontrivial element, and observe that
    \[\sigma(R)=\sigma(P)-Q=N_{K_w/M}(P)-P-Q=Q-P=-R.\]
    In particular $R=\varphi_\theta(P_\theta)$ for some point $P_\theta\in E_\theta(M)$.  Now since $P_\theta$ and $Q$ are both points on elliptic curves satisfying the constraints above (and since $\varphi_{\theta}\circ\varphi_{\theta'}=\varphi_{\theta\theta'}$) but with $K_w/F_v$ replaced by the degree $2^{r-1}$ extension $M/F_v$, we conclude the result.
\end{proof}

These lemmata then allow us to prove the following very useful result on the corestriction and the preimage under restriction of the Kummer image.

\begin{proposition}\label{prop:local corestriction images are intersections of twisted Kummers}
    Take the notation of \Cref{not:FORGETTHIS}.  Then we have identities:
    \begin{align*}
    \res_{K_w/F_v}^{-1}(\SSS_w(K;E))&=\sum_{\theta\in S}\SSS_v^{(\theta)}(F;E),\\
    \cores_{K_w/F_v}(\SSS_w(K;E))&=\bigcap_{\theta\in S}\SSS_v^{(\theta)}(F;E).
\end{align*}    
\end{proposition}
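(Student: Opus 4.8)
The plan is to prove the two identities by exploiting the duality between corestriction and restriction under the local Tate pairing, which reduces the second identity to the first. The central relationship I would use is that, with respect to the perfect local Tate pairing on $H^1(F_v, E[2])$, the maps $\res_{K_w/F_v}$ and $\cores_{K_w/F_v}$ are adjoint to one another (up to the identification of the pairings on $F_v$ and $K_w$). Concretely, for $x\in H^1(F_v, E[2])$ and $y\in H^1(K_w, E[2])$ one has $\langle x, \cores_{K_w/F_v}(y)\rangle_{F_v} = \langle \res_{K_w/F_v}(x), y\rangle_{K_w}$. Combined with \Cref{lem:twisted Kummer image is selfdual}, which tells us each $\SSS_v^{(\theta)}(F;E)$ is its own orthogonal complement (and likewise $\SSS_w(K;E)$ is self-dual over $K_w$), this adjointness converts sums into intersections under orthogonal complementation. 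Thus, granting the first identity, the second follows formally: $\cores_{K_w/F_v}(\SSS_w(K;E))$ is the orthogonal complement of $\res_{K_w/F_v}^{-1}(\SSS_w(K;E)^\perp) = \res_{K_w/F_v}^{-1}(\SSS_w(K;E))$, and taking orthogonal complements of $\sum_\theta \SSS_v^{(\theta)}$ turns it into $\bigcap_\theta (\SSS_v^{(\theta)})^\perp = \bigcap_\theta \SSS_v^{(\theta)}$.

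It therefore remains to establish the first identity, $\res_{K_w/F_v}^{-1}(\SSS_w(K;E)) = \sum_{\theta\in S}\SSS_v^{(\theta)}(F;E)$. The inclusion $\supseteq$ is the easier direction: for each $\theta\in S$, a class in $\SSS_v^{(\theta)}(F;E)$ comes from a point $P_\theta\in E_\theta(F_v)$ via $\varphi_\theta$, and its restriction to $K_w$ lands in the Kummer image $\SSS_w(K;E)$ because $\varphi_\theta(P_\theta)$ becomes a genuine $K_w$-rational point (as $\sqrt{\theta}\in K_w$ for $\theta\in S$), with compatibility of connecting maps under restriction. Since $\SSS_w(K;E)$ is a subgroup and restriction is a homomorphism, the whole sum restricts into $\SSS_w(K;E)$, giving the inclusion. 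For the reverse inclusion $\subseteq$, this is precisely where \Cref{lem:points that are sums of the quadratic points} does the work: if $x\in H^1(F_v, E[2])$ restricts into $\SSS_w(K;E)$, then $\res_{K_w/F_v}(x) = \delta(P)$ for some $P\in E(K_w)$, and this $P$ satisfies the hypothesis of that lemma, so $P = \sum_{\theta\in S}\varphi_\theta(P_\theta)$ for suitable $P_\theta\in E_\theta(F_v)$. I would then argue that $x$ and $\sum_\theta \delta_\theta(P_\theta)$ (transported via $\varphi_\theta^*$) have the same restriction to $K_w$, and that the kernel of $\res_{K_w/F_v}$ is itself contained in $\sum_\theta \SSS_v^{(\theta)}$ — for which \Cref{lem:H1ofMultiQuadIsSumOfQuadH1} identifying $H^1(K_w/F_v, E(K_w)[2])$ as an internal direct sum of the quadratic pieces should be the key input, since this kernel is the image of the inflation $H^1(K_w/F_v, E(K_w)[2])\to H^1(F_v, E[2])$.

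The main obstacle I anticipate is the careful bookkeeping in the reverse inclusion, specifically controlling the kernel of $\res_{K_w/F_v}$ and matching it cleanly against the sum $\sum_\theta \SSS_v^{(\theta)}$. The decomposition in \Cref{lem:H1ofMultiQuadIsSumOfQuadH1} is stated at the level of $H^1(K_w/F_v, E(K_w)[2])$, and I must verify that each inflated quadratic summand $\infl_i(H^1(K_{w,i}/F_v, E(K_{w,i})[2]))$, when pushed into $H^1(F_v, E[2])$, actually lies inside the corresponding twisted Kummer image $\SSS_v^{(\theta_i)}(F;E)$ — i.e. that the unramified-versus-ramified and connecting-map compatibilities genuinely align the Galois-cohomological description with the arithmetic one coming from points on $E_{\theta_i}$. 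A secondary subtlety is ensuring the adjointness formula for $\res$ and $\cores$ under the Tate pairing is applied with the correct normalisation, since an incorrect factor would not affect the set-theoretic identity but could obscure the self-duality argument; I would cite Tate local duality (as in \Cref{lem:twisted Kummer image is selfdual}) and the standard adjunction to keep this clean.
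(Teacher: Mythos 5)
Your skeleton matches the paper's proof exactly: the corestriction identity is deduced from the restriction identity by the adjointness of $\res$ and $\cores$ under the local Tate pairing together with \Cref{lem:twisted Kummer image is selfdual}; the inclusion $\supseteq$ comes from the compatibility of the connecting maps with $\varphi_\theta$; and the reverse inclusion is reduced, via \Cref{lem:points that are sums of the quadratic points}, to showing $\ker(\res_{K_w/F_v})\subseteq\sum_{\theta\in S}\SSS_v^{(\theta)}(F;E)$, with \Cref{lem:H1ofMultiQuadIsSumOfQuadH1} and inflation--restriction decomposing that kernel into the inflations from the quadratic subextensions.

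However, the step you defer as bookkeeping is the one genuine arithmetic input of the argument, and the statement you propose to verify is stronger than what is available or needed. You ask whether each inflated quadratic summand, i.e. $\ker(\res_{K_{w,i}/F_v})=\infl_i\left(H^1(K_{w,i}/F_v, E(K_{w,i})[2])\right)$, lies inside the \emph{single} twisted Kummer image $\SSS_v^{(\theta_i)}(F;E)$. There is no reason for that containment to hold; what the paper invokes is Kramer's lemma (\cite{kramer1981arithmetic}*{Lemma 3}), which gives only $\ker\left(\res_{F_v(\sqrt{\theta})/F_v}\right)\subseteq\SSS_v(F;E)+\SSS_v^{(\theta)}(F;E)$. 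Since $1\in S$, this sum of two Kummer images still lands in $\sum_{\theta\in S}\SSS_v^{(\theta)}(F;E)$, which is all the argument requires. Crucially, this is not a formal unramified-versus-ramified compatibility between the cohomological and arithmetic descriptions: it is a statement about which classes inflated from the quadratic layer are realised by local points on $E$ and on $E_\theta$, and it rests on an analysis of the norm map on local points. Without citing this (or reproving it), the reverse inclusion does not close; with it, your proof is complete and coincides with the paper's.
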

\begin{proof}
    To ease notation, we write $\FFF_v:=\res_{K_w/F_v}^{-1}(\SSS_w(K;E))$ and \newline$\CCC_v:=\cores_{K_w/F_v}(\SSS_w(K;E))$.  Firstly we note that the property for $\CCC_v$ follows from that of $\FFF_v$, the fact that these two are dual with respect to the local Tate pairing and that $\SSS_v^{(\theta)}(F;E)$ is its own orthogonal complement (\Cref{lem:twisted Kummer image is selfdual}).

    It remains to prove the identity for $\FFF_v$, which we now do.  If $\theta\in S$ then it is easy to see that the following diagram commutes:
    \begin{equation}\label{eq:PointsAndCohomForNiceExtensionFIdent}
    \begin{tikzcd}
        E_\theta(F_v)\ar{r}{\delta}\ar{d}{\varphi_\theta}&H^1(F_v, E_\theta[2])\ar{r}{\varphi_\theta^*}&H^1(F_v, E[2])\ar{d}{\res_{K_w/F_v}}\\
        E(F_v(\sqrt{\theta}))\ar{r}&E(K_w)\ar{r}{\delta}&H^1(K_w, E[2]),
    \end{tikzcd}
    \end{equation}
    and so $\SSS_v^{(\theta)}(F;E)\subseteq\FFF_v$.

    It remains to prove the other inclusion.  Let $x\in \FFF_v$, and choose $P\in E(K_w)$ such that $\delta(P)=\res_{K_w/F_v}(x)$.  Note that by \Cref{lem:points that are sums of the quadratic points} we can write
    \[P=\sum_{\theta\in S}\varphi_\theta(P_\theta)\]
    for some points $P_\theta\in E_\theta(F_v)$.  Then by commutativity of \eqref{eq:PointsAndCohomForNiceExtensionFIdent} we have
    \begin{align*}
    \res_{K_w/F_v}(x)
    =\delta(P)
    &=\sum_{\theta\in S}\delta\circ\varphi_\theta(P_\theta)
    \\&=\res_{K_w/F_v}\left(\sum_{\theta\in S}\varphi_\theta^*\circ\delta(P_\theta)\right)\in \res_{K_w/F_v}\left(\sum_{\theta\in S}\SSS_v^{(\theta)}(F;E)\right).
    \end{align*}
    It is then sufficient to prove that $\ker(\res_{K_w/F_v})\subseteq \sum_{\theta\in S}\SSS_v^{\theta}(F;E)$ when $K_w/F_v$ is a multiquadratic local extension, which we now do.  Using the inflation-restriction exact sequence, \Cref{lem:H1ofMultiQuadIsSumOfQuadH1} and compatibility of inflation maps, there is an equality
    \begin{align*}
    \ker(\res_{K_w/F_v})
    &=\im\left(\infl:H^1(K_w/F_v, E(K_w)[2])\to H^1(F_v, E[2])\right)
    \\&=\sum_{\theta\in S}\im\left(\infl:H^1\left(F_v(\sqrt{\theta})/F_v, E(F_v(\sqrt{\theta}))[2]\right)\to H^1\left(F_v, E[2]\right)\right)
    \\&=\sum_{\theta\in S}\ker\left(\res_{F_v(\sqrt{\theta})/F_v}\right).
    \end{align*}
    By \cite{kramer1981arithmetic}*{Lemma 3}, for each $\theta\in S$
    \[\ker\left(\res_{F_v(\sqrt{\theta})/F_v}\right)\subseteq \SSS_v^{}(E)+\SSS_v^{(\theta)}(E),\]
    and so the required result holds.
\end{proof}
\begin{rem}
    In the case that $K/F$ is a quadratic extension, \Cref{prop:local corestriction images are intersections of twisted Kummers} was shown by Kramer \cite{kramer1981arithmetic}*{Proposition 7}, the novelty here is in proving the local results for multiquadratic extensions.
\end{rem}

\subsection{Global Theory}\label{subsec:GlobalTheory}
We now aim to use the local results of the previous subsection to describe intersections of Selmer groups of twists in terms of the corestriction Selmer structure.  We begin by recalling the definition, as it is presented in \cite{Paterson2021}*{Definition 4.6, Lemma 4.8} (see also \cites{kramer1981arithmetic,MR4400944} for the quadratic case).

\begin{definition}\label{def:cores selmer}
    Let $E/F$ be an elliptic curve, and $K/F$ be a multiquadratic extension. For each $v\in\places_F$, choose a place $w\in\places_K$ extending it and define
    \[\CCC_v(K/F;E):=\frac{N_{K_w/F_v}E(K_w)+2E(F_v)}{2E(F_v)}\subseteq H^1(F_v, E[2]),\]
    where the inclusion is induced by the short exact sequence $0\to E[2]\to E\to E\to 0$.  Moreover, define the corestriction Selmer group to be
    \[\sel{\CCC(K)}(F,E[2])=\set{x\in H^1(F, E[2])~:~\res_v(x)\in \CCC_v(K/F;E)\ \forall v\in\places_F},\]
    where $\res_v:H^1(F,E[2])\to H^1(F_v,E[2])$ is the restriction map.
\end{definition}

It is worth briefly remarking on a special set of local conditions, which we make use of later.

\begin{lemma}\label{for good I0I1 unram have cores is sel}
    Let $E/F$ be an elliptic curve, and $K/F$ be a multiquadratic extension.  Then if $v\in\places_F$ is unramified in $K/F$ and $E/F_v$ has reduction of type $I_0$ or $I_1$ then
    \[\CCC_v(K/F;E)=\SSS_v(F,E).\]
\end{lemma}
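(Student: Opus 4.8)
The plan is to reduce the claimed identity to a statement about surjectivity of a norm map, and then to verify that surjectivity using the standard filtration of local points. First, since $N_{K_w/F_v}E(K_w)\subseteq E(F_v)$ always, the subgroup $\bigl(N_{K_w/F_v}E(K_w)+2E(F_v)\bigr)/2E(F_v)$ is contained in $E(F_v)/2E(F_v)$, so applying the connecting map $\delta$ shows $\CCC_v(K/F;E)\subseteq\SSS_v(F;E)$ with no hypotheses on $v$. Thus the content of the lemma is the reverse inclusion, which---since $\delta$ identifies $E(F_v)/2E(F_v)$ with $\SSS_v(F;E)$---is equivalent to the equality $N_{K_w/F_v}E(K_w)+2E(F_v)=E(F_v)$, that is, to the norm map $N_{K_w/F_v}\colon E(K_w)\to E(F_v)$ being surjective modulo $2E(F_v)$. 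I would in fact establish outright surjectivity.

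Second, I would exploit that $v$ is unramified in $K/F$. A finite unramified extension of local fields is cyclic, whereas $\gal(K_w/F_v)$ is a subquotient of the multiquadratic group $\gal(K/F)$ and hence elementary abelian; these are compatible only when $[K_w:F_v]\leq 2$. The case $K_w=F_v$ is trivial, so it suffices to treat the unramified quadratic extension, where $\gal(K_w/F_v)=\langle\Frob\rangle$ has order $2$. This observation is what lets the hypothesis ``unramified'' do real work.

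Third, I would pass to the N\'eron model and use the filtration $\hat{E}(\mathfrak{m})\subseteq E_0\subseteq E$ of local points, where $\hat{E}(\mathfrak{m})$ is the formal group and $E/E_0$ is the group of components. For reduction of type $I_0$ (good) or $I_1$ the component group is trivial over $F_v$, and remains so over $K_w$ (where the reduction persists as type $I_1$, possibly becoming split), so that $E(F_v)=E_0(F_v)$ and $E(K_w)=E_0(K_w)$. This gives a commutative diagram of short exact sequences relating the formal-group and special-fibre points over $K_w$ and $F_v$, with vertical maps the norm. The norm $\hat{E}(\mathfrak{m}_{K_w})\to\hat{E}(\mathfrak{m}_{F_v})$ is surjective because the extension is unramified: on each graded piece $\mathfrak{m}^i/\mathfrak{m}^{i+1}$ the norm induces the trace of the separable residue-field extension, which is surjective, and a successive-approximation argument lifts this to the formal group. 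On the special fibre, $\tilde{E}^0$ is either the reduced elliptic curve (type $I_0$) or a one-dimensional torus (type $I_1$, split or non-split), both connected, so the norm on their finite-field points is surjective by Lang's theorem. A short diagram chase then yields surjectivity of $N_{K_w/F_v}\colon E(K_w)\to E(F_v)$, and hence the lemma.

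The main obstacle will be assembling the two surjectivities into surjectivity of the full norm, and in particular confirming that for $I_1$ reduction the component group genuinely vanishes in both the split and non-split cases, and over the (possibly split-inducing) unramified quadratic extension, so that the filtration argument applies cleanly with no correction term. The formal-group and finite-field surjectivity statements are each standard; the care lies in verifying that the type-$I_0$/$I_1$ hypothesis is exactly what forces $E=E_0$ on both levels of the tower.
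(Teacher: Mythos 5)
Your proof is correct and follows the same route as the paper: both reduce the lemma to the surjectivity of the norm map $N_{K_w/F_v}\colon E(K_w)\to E(F_v)$, which the paper simply cites from Mazur (\emph{Rational points of abelian varieties\dots}, Cor.\ 4.4, together with a remark handling the $I_1$ case). Your filtration-plus-Lang argument is precisely the standard proof of that cited result, and your observation that an unramified multiquadratic local extension has degree at most $2$ is a correct (if not strictly necessary) simplification.
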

\begin{proof}
    By \cite{mazur1972rational}*{Cor 4.4} and \cite{Paterson2021}*{Remark 3.5} we have that 
    \[E(F_v)=N_{K_w/F_v}E(K_w),\]
    from which the claimed identity is immediate.
\end{proof}

\begin{proposition}\label{prop:Multiquadratic extn desc of CandF with Twisted Kummers}
Assume that $F$ is a number field, $K/F$ is a multiquadratic extension, and let $E/F$ be an elliptic curve.  Let $v\in\places_F$ and $w\in\places_K$ be a place extending $v$, and $S:=\ker(F^\times/F^{\times2}\to K^\times/K^{\times2})$.  Then
\begin{align*}
    \FFF_v(K/F;E)&=\sum_{\theta\in S}\SSS_v^{(\theta)}(F;E),\\
    \CCC_v(K/F;E)&=\bigcap_{\theta\in S}\SSS_v^{(\theta)}(F;E).
\end{align*}
Moreover,
\[\sel{\CCC(K)}(F, E[2])=\bigcap_{\substack{\theta\in S}}(\varphi_\theta)^*\left(\sel{2}(E_\theta/F)\right).\]
\end{proposition}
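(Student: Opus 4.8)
The plan is to deduce all three identities from the purely local \Cref{prop:local corestriction images are intersections of twisted Kummers}, the only real work being to reconcile the local group $S$ of \Cref{not:FORGETTHIS} with the global group $S=\ker(F^\times/F^{\times2}\to K^\times/K^{\times2})$ used here, and then to read off the global Selmer group from its local conditions. Throughout I fix a place $v\in\places_F$ and a choice of $w\in\places_K$ above it.

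First I would match the abstractly-defined groups $\FFF_v(K/F;E)=\res_{K_w/F_v}^{-1}(\SSS_w(K;E))$ and $\CCC_v(K/F;E)$ with the restriction-preimage and corestriction of $\SSS_w(K;E)$. For $\CCC_v$ this is the compatibility of corestriction with the norm: the square
\[
\begin{tikzcd}
E(K_w)\ar{r}{\delta}\ar{d}{N_{K_w/F_v}} & H^1(K_w,E[2])\ar{d}{\cores_{K_w/F_v}}\\
E(F_v)\ar{r}{\delta} & H^1(F_v,E[2])
\end{tikzcd}
\]
(already exploited in \Cref{lem:points that are sums of the quadratic points}) commutes, so $\cores_{K_w/F_v}(\SSS_w(K;E))=\delta\bigl(N_{K_w/F_v}E(K_w)\bigr)$, which is exactly the subgroup $\CCC_v(K/F;E)$ of \Cref{def:cores selmer}; the group $\FFF_v$ is its Tate dual, matching the self-duality of $\SSS_v^{(\theta)}$ in \Cref{lem:twisted Kummer image is selfdual}. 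With these identifications, \Cref{prop:local corestriction images are intersections of twisted Kummers} yields both local identities, but with $\theta$ ranging over the \emph{local} group of \Cref{not:FORGETTHIS}.

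The step I expect to require the most care — and the only genuinely new point beyond the local proposition — is checking that this local range of $\theta$ agrees with the global $S$. Here I would argue by Kummer theory: as $K/F$ is multiquadratic, $S=\ker(F^\times/F^{\times2}\to K^\times/K^{\times2})$ is precisely the subgroup with $K=F(\sqrt{S})$, so the completion satisfies $K_w=F_v\bigl(\sqrt{\im(S)}\bigr)$, and local Kummer theory identifies $\ker(F_v^\times/F_v^{\times2}\to K_w^\times/K_w^{\times2})$ with the image of $S$ in $F_v^\times/F_v^{\times2}$. Since $\SSS_v^{(\theta)}(F;E)$ depends only on the class of $\theta$ in $F_v^\times/F_v^{\times2}$, the local index set of \Cref{not:FORGETTHIS} and $\im(S)$ produce the same family $\{\SSS_v^{(\theta)}\}$, so the sums and intersections coincide. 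This argument simultaneously shows the conditions are independent of the choice of $w\mid v$, since conjugate places give isomorphic extensions $K_w/F_v$.

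Finally, the ``moreover'' is a formal manipulation. Unwinding \Cref{def:cores selmer} and substituting the local identity just established,
\[
\sel{\CCC(K)}(F,E[2])=\set{x\in H^1(F,E[2])~:~\res_v(x)\in\SSS_v^{(\theta)}(F;E)\ \forall v\in\places_F,\ \forall\theta\in S}.
\]
Interchanging the two universal quantifiers, the condition for a single fixed $\theta$ cuts out exactly the Selmer group $\sel{\SSS^{(\theta)}}(F,E[2])$, which by the Proposition of \Cref{subsec:TwistedKummerImages} equals $(\varphi_\theta)^*\bigl(\sel{2}(E_\theta/F)\bigr)$; intersecting over $\theta\in S$ gives the claim. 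The interchange of the intersection over places with that over $\theta$ is legitimate precisely because each $\SSS_v^{(\theta)}$ is an honest local condition, so no compatibility between distinct $\theta$ at a single place is needed.
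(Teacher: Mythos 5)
Your proof is correct and follows essentially the same route as the paper, which simply cites \Cref{prop:local corestriction images are intersections of twisted Kummers} for the two local identities and then reads off the global Selmer group from the local conditions. The extra bookkeeping you supply --- identifying $\cores_{K_w/F_v}(\SSS_w(K;E))$ with the norm-quotient of \Cref{def:cores selmer}, matching the global $S=\ker(F^\times/F^{\times2}\to K^\times/K^{\times2})$ with the local index set of \Cref{not:FORGETTHIS} via Kummer theory, and the quantifier interchange for the ``moreover'' --- is exactly what the paper leaves implicit, and all of it checks out.
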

\begin{proof}
    The first two identities follow from \Cref{prop:local corestriction images are intersections of twisted Kummers}.  The equality for the Selmer group then follows from the equalities for the local groups $\set{\CCC_v(K/F:E)}_{v\in\places_F}$.
\end{proof}

\section{Selmer Elements via Binary Quartic Forms}\label{sec:SelmerandBQF}
We first discuss the explicit correspondence between $2$-Selmer elements and certain equivalence classes of binary quartic forms.  This explicit version of the correspondence was first discovered by Birch and Swinnerton-Dyer \cite{MR146143} and has later been developed substantially \cite{MR1628193} (see also \cites{MR1693411,MR2509048,MR1951757}).  We conclude this section by determining which equivalence classes of binary quartic forms correspond to elements in the intersections of $2$-Selmer groups of quadratic twists.

\subsection{Binary Quartic Forms and Selmer Elements}
Recall that a binary quartic form over a ring $R$ is a degree $4$ homogeneous polynomial in two variables with coefficients in $R$.

\begin{notation}\label{def:V_Z^i}
For each ring $R$ we define $V_R$ to be the set of binary quartic forms with coefficients in $R$.  Moreover, for $i=0,1,2$ denote by $V_\RR^{(i)}$ to be the subset of binary quartic forms in $V_\RR$ with nonzero discriminant having $i$ pairs of complex roots in $\mathbb{P}^1_\CC$ and $4-2i$ real roots in $\mathbb{P}^1_\CC$.  The set of definite quartic forms, those which take only positive or only negative values when evaluated at nonzero elements $(x_0,y_0)\in\RR^2$, is $V_{\RR}^{(2)}$.  We denote the subset of positive (resp. negative) definite forms by $V_{\RR}^{(2+)}$ (resp. $V_{\RR}^{(2-)}$).  We write $V_\ZZ^{(i)}:=V_\ZZ\cap V_\RR^{(i)}$.
\end{notation}

\begin{definition}
    Let $K$ be a field and $g(x,y)\in V_K$ be a binary quartic form.  Then we say that $f$ is $K$-soluble if there are $x,y,z\in K$ with $(x,y)\neq (0,0)$ such that
    \[z^2=g(x,y).\]
    We say that $g\in V_\QQ$ is locally soluble if $g$ is $\QQ_v$-soluble for every place $v\in\places_\QQ$.
\end{definition}

For every ring $R$ there is a well defined (twisted) action of $\GL_2(R)$ on $V_R$, given for each $g\in V_R$ and $\gamma\in \GL_2(R)$ by
\[\gamma\cdot g(x,y)=\det(\gamma)^{-2}g((x,y)\cdot \gamma^t),\]
where the action of $\gamma$ on $(x,y)$ on the right hand side is just the standard matrix operation on a row vector, and $\gamma^t$ is the transpose matrix.  Moreover, the centre of $\GL_2(R)$ acts trivially, and so this descends to an action of $\PGL_2(R)$.

Associated to $g(x,y)=ax^4+bx^3y+cx^2y^2+dxy^3+ey^4\in V_R$ there are then two invariants of this action, $I(g)$ and $J(g)$, given by
\begin{align*}
    I(g)&=12ae-3bd+c^2,\\
    J(g)&=72ace+9bcd-27ad^2-27eb^2-2c^3.
\end{align*}
There are also the covariants $g_4(g;X,Y), g_6(g;X,Y)\in R[X,Y]$, given by
\begin{eqnarray*}
    g_4(g;X,Y)&=&(3b^2-8ac)X^4 + 4(bc-6ad)X^3Y + 2(2c^2-24ae-3bd)X^2Y^2
    \\&& + 4(cd-6be)XY^3 + (3d^2-8ce)Y^4,
    \\g_6(g;X,Y)&=&b^3+8a^2d-4abc)X^6+2(16a^2e+2abd-4ac^2+b^2c)X^5Y
    \\&&+5(8abe+b^2d-4acd)X^4Y^2+20(b^2e-ad^2)X^3Y^3
    \\&&-5(8ade+bd^2-4bce)X^2Y^4-2(16ae^2+2bde-4c^2e+cd^2)XY^5
    \\&&-(d^3+8be^2-4cde)Y^6.
\end{eqnarray*}

\subsection{Correspondence}

We now describe the relationship to $2$-Selmer groups of elliptic curves.  We note the parts of use to us as one large theorem, to gather everything in one place for use later in the article.  The theorem summarises \cite{MR2509048}*{\S6}, though the reader should note that we consider the elliptic curve $E:y^2=x^3-\tfrac{I}{3}x-\tfrac{J}{27}$, whereas the authors in loc. cit. consider the model (for the same curve) $y^2=x^3-27Ix-27J$, and so there is a change of variables applied between their results and what is stated below.

\begin{theorem}[\cite{MR2509048}*{\S6}]\label{thm:CremonaFisherSummary}
    Given the data of a pair, $(E,F)$ where $F$ is a characteristic $0$ field, and $E/F$ is an elliptic curve with specified Weierstrass equation
    \[E:y^2=x^3-\tfrac{I}{3}x-\tfrac{J}{27},\]
    the following is true.

    Let $\bar{L}:=\bar{F}\times \bar{F}\times \bar{F}$, let $f(X):=X^3-3IX+J$, and consider the \'etale algebra $L=L(E):=F[\epsilon]/\gp{f(\epsilon)}$.  Fix $\epsilon_1,\epsilon_2,\epsilon_3\in \bar{F}$ to be the three roots of $f(X)$ and so fix the embedding $L\subseteq \bar{L}$ induced by mapping $\epsilon\mapsto (\epsilon_1,\epsilon_2,\epsilon_3)$.  Under this embedding, we define the norm of an element $w=(w_1,w_2,w_3)\in L\subseteq \bar{L}$ to be $N_{L/F}w:=w_1w_2w_3\in F$.

    Then there is a commutative diagram (of groups)
    \begin{equation}\label{eq:CremonaFisherDiagram}
    \begin{tikzcd}
        E(F)/2E(F)\ar[r, "\delta"]\ar[d, "q"]
        &H^1(F, E[2])\ar[d, "\alpha"]\\
        \set{\substack{\PGL_2(F)\textnormal{-equivalence classes of}\\\textnormal{binary quartic forms }g\in V_F\\\textnormal{with }(I(g),J(g))=(I,J)}}\ar[r, "z"]\ar[ur, "\delta'"]
        &\ker\left(N_{L/F}:L^\times/L^{\times2}\to F^\times/F^{\times2}\right)
    \end{tikzcd}
    \end{equation}
    where the definitions and properties of the maps are as follows:
    \begin{itemize}
        \item The map $\delta$ is an injection.  It is the usual connecting map arising from taking Galois cohomology on the multiplication by $2$ short exact sequence for $E$.
        \item The map $q$ is injective, and its image is the collection of cosets represented by $F$-soluble binary quartic forms.  For $P=(\xi,\eta)\in E(F)$ we define 
        \[q(P):=\set{X^4-\frac{3}{2}\xi X^2Y^2-\eta XY^3+(\tfrac{I}{12}-\tfrac{3\xi^2}{4})Y^4}\]
        where the braces denote that we map to the $\PGL_2(F)$-equivalence class of the stated binary quartic form.  
        \item The map $\alpha$ is an isomorphism.  This map is constructed as follows (see \cite{MR1326746}*{Theorem 1.1}): consider the map
        \begin{align*}
            w:E[2]&\to \mu_2(\bar{L})\\
            P&\mapsto \left(e_2(P, (\epsilon_1,0)),\ e_2(P, (\epsilon_2,0)),\ e_2(P, (\epsilon_3,0))\right)
        \end{align*}
        where $e_2$ denotes the Weil Pairing.  This induces a map
        \[w^*:H^1(F, E[2])\to H^1(F, \mu_2(\bar{L})).\]
        Additionally we have the Kummer isomorphism (see \cite{MR554237}*{p.152, Ex.2})
        \[\kappa:H^1(F, \mu_2(\bar{L}))\cong L^\times/L^{\times 2}.\]
        Then $\alpha:=\kappa\circ w^*$.
        \item The map $z$ called the cubic invariant, and is injective.  It is constructed as follows.  For a binary quartic form $g$ representing an element of the domain, define the `irrational covariant' to be
        \[G(X,Y):=\frac{1}{3}\left(4\epsilon g(X,Y)+g_4(g;X,Y)\right)\in L[X,Y].\]
        Choose $(x,y)\in F\times F$ such that $G(x,y)\in L^\times$ (such a pair exists by \cite{MR2509048}*{\S2 Paragraph 2}), and define
        \[z(g):=G(x,y)\in L^\times/L^{\times2}.\]
        This map is independent of the choice of $(x,y)$ by \cite{MR2509048}*{Proposition 2}, and $z(g)$ has square norm by \cite{MR2509048}*{Lemma 1}.
        \item The map $\delta'$ is injective, and is constructed as follows.  Let $g(X,Y)$ be a representative of an equivalence class in the domain.  Then let $C$ be the smooth projective curve with affine equation $Z^2=g(X,1)$, and define the map
        \begin{align*}
        \pi:C&\to E\\
        (x,z)&\mapsto \left(\frac{g_4(g;x,1)}{12z^2}, \frac{g_6(g;x,1)}{8z^3}\right).
        \end{align*}
        The pair $(C,\pi)$ is in fact a $2$-covering of $E$ (see \cite{MR2509048}*{\S6 p11 Remarks (3)} and references therein).  The map $\delta'$ sends the equivalence class of $g$ to the equivalence class of the $2$-covering $(C,\pi)$ and then maps this to a cocycle class via the usual correspondence (see e.g. \cite{stoll2006descent}).
    \end{itemize}
\end{theorem}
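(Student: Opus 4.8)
The plan is to assemble the diagram from its three structural ingredients: the algebraic isomorphism $\alpha$, the geometric descent map $\delta'$ identifying binary quartics with $2$-coverings, and the point map $q$. The maps $\delta$ and $z$ are then read off as composites, and the commutativity of the two halves becomes the heart of the matter.

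First I would construct $\alpha$ and establish that it is an isomorphism onto $\ker(N_{L/F})$. The key input is the classical $G_F$-equivariant identification of $E[2]$ with the norm-one (equivalently, product-one) subgroup of $\mu_2(\bar L)$: the map $w$ of the statement sends a $2$-torsion point to the triple of its Weil pairings against the three nontrivial $2$-torsion points, which are indexed by the roots $\epsilon_i$ of $f$. Since the three nontrivial $2$-torsion points sum to $O$ and $e_2$ is bilinear, the product of the coordinates of $w(P)$ is trivial, so $w$ has image in the norm-one subgroup; nondegeneracy of $e_2$ makes $w$ injective, and a count shows it is onto that subgroup. This is exactly \cite{MR1326746}*{Theorem 1.1}. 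Taking $H^1$ and composing with the Kummer isomorphism $\kappa$ of \cite{MR554237} then yields $\alpha$ as an isomorphism $H^1(F,E[2])\cong\ker(N_{L/F}:L^\times/L^{\times2}\to F^\times/F^{\times2})$, the norm condition on cohomology descending from the product-one condition on coefficients. The injectivity of $\delta$ is the usual consequence of the long exact sequence attached to multiplication by $2$ on $E$.

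Next I would set up $\delta'$ via the standard theory of $2$-descent. The $\PGL_2(F)$-equivalence classes of binary quartics with fixed invariants $(I,J)$ are in bijection with isomorphism classes of $2$-coverings $(C,\pi)$ of $E$: to a quartic $g$ one attaches the curve $C:Z^2=g(X,1)$ together with the degree-$4$ map $\pi$ built from the covariants $g_4,g_6$ as recorded in the statement, and one checks that $\pi$ exhibits $C$ as a torsor under $E$ in the appropriate sense (\cites{stoll2006descent,MR1628193}). Sending such a torsor to its class in $H^1(F,E[2])$ gives the injective map $\delta'$. The map $q$ is then the explicit soluble quartic attached to a point $P=(\xi,\eta)$; I would verify directly that its invariants are $(I,J)$ and that it is $F$-soluble, since it arises from the rational point $P$. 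Thus $q$ lands among the $F$-soluble classes, is injective, and satisfies $\delta'\circ q=\delta$; since a $2$-covering is $F$-soluble precisely when its class lies in the image of $\delta$, this pins down the image of $q$ as exactly the $F$-soluble quartics, matching the claimed description.

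The main obstacle, and the real content of \cite{MR2509048}, is the lower triangle: proving that the explicitly defined cubic invariant $z$ computes the composite $\alpha\circ\delta'$. Concretely one must show that the irrational covariant $G(X,Y)=\tfrac{1}{3}(4\epsilon\,g+g_4(g))$ is well defined in $L^\times/L^{\times2}$ independently of the auxiliary point $(x,y)$ (\cite{MR2509048}*{Proposition 2}), that $z(g)$ has square norm so that it really lands in $\ker(N_{L/F})$ (\cite{MR2509048}*{Lemma 1}), and finally that evaluating $G$ matches the Kummer class of the cocycle cut out by the torsor $\delta'(g)$ under the Weil-pairing identification $\alpha$. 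This last matching is the computation that forces the definitions of $w$, $\kappa$ and $G$ to cohere, and is where I expect all the genuine difficulty to lie. The only additional bookkeeping is to propagate the change of model from $y^2=x^3-27Ix-27J$ in loc.\ cit.\ to $y^2=x^3-\tfrac{I}{3}x-\tfrac{J}{27}$ through every formula, rescaling the invariants, covariants and roots $\epsilon_i$ accordingly, which is routine but must be carried out with care to keep the diagram commutative.
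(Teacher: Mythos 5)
Your proposal is correct and matches the paper's treatment: the paper states this theorem purely as a summary of \cite{MR2509048}*{\S6} (together with \cite{MR1326746} for $\alpha$), offering no independent proof beyond the remark that a change of Weierstrass model from $y^2=x^3-27Ix-27J$ must be propagated through the formulas. Your outline correctly defers each ingredient to the same sources and identifies the genuine content (the compatibility $z=\alpha\circ\delta'$ and the model rescaling) in the same way.
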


From this we state a shorter helpful corollary, which is at the core of the work of Bhargava and Shankar on $2$-Selmer groups \cite{MR3272925}*{see e.g. \S3.1}.
\begin{corollary}[See also \cite{MR3272925}*{Theorem 3.2} and references therein]\label{cor:solBQFsAreE/2}
    Let $F$ be a characteristic $0$ field, and $E/F$ be an elliptic curve with specified Weierstrass equation 
    \[E:y^2=x^3-\tfrac{I}{3}x-\tfrac{J}{27}.\]

    Using the notation of \Cref{thm:CremonaFisherSummary} for the data $(E,F)$ above, the map $q$ is a bijection between $\PGL_2(F)$-orbits of $F$-soluble binary quartic forms with invariants $I,J$ and elements of $E(F)/2E(F)$.  Under this bijection, the identity element corresponds to the (unique!) $\PGL_2(F)$-orbit of binary quartic forms possessing a linear factor over $F$.

    Furthermore, the stabiliser in $\PGL_2(F)$ of any (not necessarily $F$-soluble) binary quartic form $g\in V_F$ with invariants $(I,J)=(I(g),J(g))$ such that $4I^3-J^2\neq 0$ is isomorphic to $E(F)[2]$ where $E$ is the elliptic curve defined by $y^2=x^3-\tfrac{I}{3}x-\tfrac{J}{27}$.
\end{corollary}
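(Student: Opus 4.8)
The plan is to deduce everything from \Cref{thm:CremonaFisherSummary}, which already records the injectivity of the maps $q$, $\delta$ and $\delta'$ together with the description of the image of $q$. Since $F$-solubility of a binary quartic form is preserved by the $\PGL_2(F)$-action (a change of variables does not affect the existence of a solution to $z^2=g(x,y)$), the collection of orbits represented by $F$-soluble forms coincides with the collection of orbits all of whose members are $F$-soluble. As \Cref{thm:CremonaFisherSummary} identifies this collection with the image of the injection $q$, the first bijection is immediate.

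For the identity element I would exploit the commutativity $\delta=\delta'\circ q$ coming from \eqref{eq:CremonaFisherDiagram}. If $g$ has a linear factor over $F$, then the root of that factor furnishes a rational point (with $z=0$) on the $2$-covering $C\colon z^2=g(x,1)$, so $(C,\pi)$ is the trivial torsor and hence $\delta'$ sends the class of $g$ to $0$. Because such a $g$ is in particular $F$-soluble, we may write $[g]=q(P)$, and then $\delta(P)=\delta'(q(P))=0$ forces $P=O$ by injectivity of $\delta$. Thus every form with a rational linear factor lies in the single orbit $q(O)$; conversely $q(O)$, being the trivial class under the injection $\delta'$, corresponds to the trivial $2$-covering, whose associated form is reducible over $F$ in the standard dictionary of $2$-descent. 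This pins down $q(O)$ as the unique orbit possessing a rational linear factor.

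The stabiliser statement I would prove by Galois descent from $\bar{F}$. A form $g$ with $4I^3-J^2\neq0$ has four distinct roots in $\PP^1(\bar{F})$, and an element of $\PGL_2(\bar{F})$ fixing $g$ must permute these four points; since a Möbius transformation is determined by three points, a direct check shows the fixing subgroup is exactly the Klein four-group of double transpositions, so $\textup{Stab}_{\PGL_2(\bar{F})}(g)\cong(\ZZ/2\ZZ)^2$. The three nontrivial involutions are indexed by the three ways of partitioning the four roots into two pairs, and these partitions correspond $G_F$-equivariantly to the three roots $\epsilon_1,\epsilon_2,\epsilon_3$ of the resolvent cubic $f(X)=X^3-3IX+J$, that is, to the three nonzero $2$-torsion points of $E$. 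Hence the isomorphism $\textup{Stab}_{\PGL_2(\bar{F})}(g)\cong E[2]$ is $G_F$-equivariant. Since $\PGL_2(F)=\PGL_2(\bar{F})^{G_F}$ by Hilbert's Theorem 90, taking $G_F$-invariants yields $\textup{Stab}_{\PGL_2(F)}(g)\cong E[2]^{G_F}=E(F)[2]$.

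The main obstacle is the last paragraph: establishing that the identification of the four-group of double transpositions with $E[2]$ is genuinely $G_F$-equivariant. This amounts to matching the combinatorics of the root partitions of $g$ with the Weil-pairing structure on $E[2]$ encoded by the map $w$ (equivalently $\alpha$) of \Cref{thm:CremonaFisherSummary}, and checking that the Galois action on the four roots induces the same permutation of the three pairings as it does on $\set{\epsilon_1,\epsilon_2,\epsilon_3}$. Verifying the existence half of the identity claim — that $q(O)$ really contains a reducible representative — is the only other delicate point, and I would dispatch it using the standard correspondence between trivial $2$-coverings and reducible binary quartic forms.
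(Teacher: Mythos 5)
The paper gives no proof of this corollary: it is stated as an immediate repackaging of \Cref{thm:CremonaFisherSummary}, with the substantive content (above all the stabiliser computation) delegated to \cite{MR3272925}*{Theorem 3.2} and the references cited there. Your reconstruction follows the standard route and is sound where it is complete. The first bijection is indeed just the theorem's description of the image of $q$, once one notes (as you do) that $F$-solubility is a $\PGL_2(F)$-invariant notion. For the identity element, the argument via $\delta=\delta'\circ q$ and injectivity of $\delta$ is correct, but ``the torsor is trivial'' is looser than what you need: a rational point of $C$ only trivialises the class in $H^1(F,E)$, whereas the class in $H^1(F,E[2])$ is $\delta(\pi(Q))$; the relevant feature of your point is that $z=0$ forces $\pi(Q)=O$, so the class does vanish. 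The existence of a reducible representative, which you flag, is settled at once by exhibiting $x^3y-\tfrac{I}{3}xy^3-\tfrac{J}{27}y^4$, which has invariants $(I,J)$ and the linear factor $y$.

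The one place where your ``direct check'' conceals real content is the stabiliser claim. An element $\gamma\in\PGL_2(\bar{F})$ permuting the four roots satisfies only $\gamma\cdot g=\lambda g$ for some scalar $\lambda$ constrained by $\lambda^2I=I$ and $\lambda^3J=J$, so permuting the roots is strictly weaker than fixing the form; and when the roots lie in harmonic or equianharmonic position the subgroup of $\PGL_2(\bar{F})$ preserving the root set is strictly larger than the Klein four-group (it contains $4$-cycles, resp.\ $3$-cycles), contrary to what ``determined by three points'' suggests. For instance $g=x^4+y^4$ has $J=0$ and the $4$-cycle $\textup{diag}(i,1)$ preserves the roots but sends $g$ to $-g$. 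One must therefore verify both that each double-transposition involution fixes $g$ on the nose (for an involution $\lambda=\pm1$, and when $J=0$ the invariant constraints alone do not force $\lambda=1$) and that the extra root-preserving elements in the special configurations scale $g$ by a nontrivial root of unity. These checks, together with the $G_F$-equivariance of the identification of the three pairings with the nonzero elements of $E[2]$ that you rightly single out as the main remaining obstacle, are precisely what \cite{MR3272925}*{Theorem 3.2} and its sources supply; your Galois-descent endgame via $\PGL_2(F)=\PGL_2(\bar{F})^{G_F}$ and $E[2]^{G_F}=E(F)[2]$ is then correct.
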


\subsection{Elliptic Curves Over \texorpdfstring{$\QQ$}{the Rational Numbers}}

At this point we restrict to our case of interest, which here is elliptic curves over $F=\QQ$.  We begin with a useful bookkeeping definition.

\begin{definition}[\cite{MR3272925}*{\S3}]\label{def:I(E) and J(E)}
    Let $E/\QQ$ be an elliptic curve, and let $(A,B)\in\Epsilon$ be the unique pair such that
    \begin{equation}\label{eq:E_(A,B)}
        E\cong E_{A,B}:y^2=x^3+Ax+B.
    \end{equation}
    Then we define the quantities
    \begin{align*}
        I(E)&:=-3A\\
        J(E)&:=-27B
    \end{align*}
    Moreover, for $(I,J)=(I(E),J(E))$ we will use the notation $E^{I,J}:y^2=x^3-\tfrac{I}{3}x-\tfrac{J}{27}$ in order to pass back from the invariants $I(E),J(E)$ to a model for the curve $E$.
\end{definition}

We then have the following well-known proposition, which follows from \cite{MR146143}*{Lemma 2} (see also \cite{MR1628193}).

\begin{proposition}[see also \cite{MR3272925}*{Proposition 3.3}]\label{prop:2SelmerElementsAsBQF}
    Let $E/\QQ$ be an elliptic curve, with specified Weierstrass equation
    \[E:y^2=x^3-\tfrac{I}{3}x-\tfrac{J}{27}.\]
    Then using the notation of \Cref{thm:CremonaFisherSummary} for the data $(E,\QQ)$: the map $\delta'$ is a bijection between the set of $\PGL_2(\QQ)$-orbits of locally soluble binary quartic forms $g\in V_\QQ$ with $(I(g),J(g))=(I,J)$ and the $2$-Selmer group $\sel{2}(E/\QQ)\subseteq H^1(\QQ, E[2])$.

    Furthermore, the set of binary quartic forms $g\in V_\QQ$ having a linear factor (over $\QQ$) and invariants $(I,J)$ lie in a single $\PGL_2(\QQ)$-orbit, and this orbit maps to the identity element of $\sel{2}(E/\QQ)$.
\end{proposition}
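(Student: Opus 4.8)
The plan is to deduce this global statement over $\QQ$ from the field-by-field correspondence of \Cref{thm:CremonaFisherSummary}, by imposing local solubility conditions at every place simultaneously and recognising that this is exactly the defining condition for the $2$-Selmer group. First I would invoke \Cref{thm:CremonaFisherSummary} and \Cref{cor:solBQFsAreE/2} for each completion $\QQ_v$: for each place $v$, the map $q$ gives a bijection between $\PGL_2(\QQ_v)$-orbits of $\QQ_v$-soluble binary quartic forms with invariants $(I,J)$ and $E(\QQ_v)/2E(\QQ_v)$, and composing with $\delta$ identifies this image with the local Kummer image $\SSS_v(\QQ;E)\subseteq H^1(\QQ_v,E[2])$. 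The key compatibility I would establish is that the global map $\delta'$ from \Cref{thm:CremonaFisherSummary} (applied over $F=\QQ$) is compatible with the local maps $\delta'_v$ under the restriction maps $\res_v:H^1(\QQ,E[2])\to H^1(\QQ_v,E[2])$; this follows because the $2$-covering $(C,\pi)$ attached to $g$ base-changes to the $2$-covering attached to $g$ viewed over $\QQ_v$, and the correspondence between $2$-coverings and cohomology classes is functorial in the base field.

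With this compatibility in hand, the central step is the chain of equivalences: a $\PGL_2(\QQ)$-orbit of forms $g$ with invariants $(I,J)$ maps under $\delta'$ to a class $x\in H^1(\QQ,E[2])$, and $g$ is locally soluble (i.e. $\QQ_v$-soluble for all $v$) if and only if $\res_v(x)$ lies in the image of the local connecting map $\delta$ for every $v$, which is precisely the condition $\res_v(x)\in\SSS_v(\QQ;E)$ for all $v$, i.e. $x\in\sel{2}(E/\QQ)$. Here I would use the local case of \Cref{cor:solBQFsAreE/2}: $\QQ_v$-solubility of $g$ is equivalent to $\res_v(x)$ lying in the image of $E(\QQ_v)/2E(\QQ_v)$ under $\delta$, which is exactly $\SSS_v(\QQ;E)$. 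Injectivity of $\delta'$ globally (already asserted in \Cref{thm:CremonaFisherSummary}) gives injectivity of the claimed map, and surjectivity onto $\sel{2}(E/\QQ)$ follows because any $x\in\sel{2}(E/\QQ)$ is everywhere locally in the Kummer image, hence arises from a locally soluble form by running the local bijections and the global-to-local compatibility backwards.

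For the final clause, I would apply the identity-element part of \Cref{cor:solBQFsAreE/2} over $\QQ$: binary quartic forms with invariants $(I,J)$ possessing a linear factor over $\QQ$ form a single $\PGL_2(\QQ)$-orbit, corresponding to the identity of $E(\QQ)/2E(\QQ)$; since such a form is manifestly $\QQ$-soluble (a rational linear factor yields a rational point with $z=0$), it is in particular locally soluble, and its image under $\delta'=\delta\circ q$ is $\delta(O)=0$, the identity of $\sel{2}(E/\QQ)$.

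I expect the main obstacle to be the global-to-local compatibility of $\delta'$ with the restriction maps, together with the precise matching of ``$\QQ_v$-soluble'' with ``$\res_v(x)\in\SSS_v(\QQ;E)$''. The subtlety is that $\delta'$ is defined via $2$-coverings rather than directly via $\delta$, so one must check that the $2$-covering description and the connecting-map description agree locally (they do by the standard theory, e.g. \cite{stoll2006descent}), and that local solubility of the covering $C$ is equivalent to the image class being a Kummer class at $v$; this is the content of the local Hasse principle for the relevant torsors and is exactly where the Selmer condition enters.
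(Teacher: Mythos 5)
Your local analysis is sound: the compatibility of $\delta'$ with restriction to each $\QQ_v$, the identification of ``$\QQ_v$-soluble'' with ``$\res_v(x)$ lies in the Kummer image $\SSS_v(\QQ;E)$'' via \Cref{cor:solBQFsAreE/2}, and the injectivity of the global $\delta'$ from \Cref{thm:CremonaFisherSummary} together give exactly the statement that $\delta'$ restricted to $\PGL_2(\QQ)$-orbits of locally soluble forms is an \emph{injection into} $\sel{2}(E/\QQ)$, and the final clause about forms with a rational linear factor is handled correctly. For what it is worth, the paper does not argue even this much: it simply cites \cite{MR146143}*{Lemma 2} (see also \cite{MR1628193}).

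The gap is in surjectivity. ``Running the local bijections backwards'' produces, for each place $v$, a $\QQ_v$-soluble form $g_v\in V_{\QQ_v}$ with $\delta'_v([g_v])=\res_v(x)$, and the global-to-local compatibility only tells you that \emph{if} a global form $g\in V_\QQ$ with $\delta'([g])=x$ exists, then it is $\PGL_2(\QQ_v)$-equivalent to $g_v$ for every $v$; nothing in \Cref{thm:CremonaFisherSummary} asserts that the image of the global $\delta'$ contains $\sel{2}(E/\QQ)$, and a collection of local forms does not glue to a global one. Concretely, a class $x\in H^1(\QQ,E[2])$ gives a $2$-covering $(C,\pi)$ of $E$ over $\QQ$, but writing $C$ in the form $z^2=g(x,y)$ with $g\in V_\QQ$ requires the degree-$2$ divisor class on $C$ to be represented by a rational divisor, i.e.\ a certain conic (the quotient of $C$ by its hyperelliptic involution) to have a $\QQ$-point; equivalently, a Brauer obstruction attached to $x$ must vanish. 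For $x$ in the Selmer group this obstruction is everywhere locally trivial, and one concludes by the Hasse principle for conics (Hasse--Minkowski). This global input is precisely the content of \cite{MR146143}*{Lemma 2} and is the substance of the proposition; your argument as written does not supply it.
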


In order to reduce to counting lattice points, we have the following lemma which follows from \cite{MR146143}*{Lemmas 3,4,5} and shows that we can always find an integral representative (i.e. $g\in V_\ZZ\subset V_\QQ$) in the $\PGL_2(\QQ)$-orbit of a locally soluble binary quartic form.
\begin{lemma}[\cite{MR146143}*{Lemmas 3,4,5}, see also \cite{MR3272925}*{Lemma 3.4}]\label{lem:loc sol BQF with decent invts are repd by integral BQF}
    Let $g\in V_\QQ$ be a locally soluble binary quartic form having integer invariants $(I,J):=(I(g), J(g))$ such that $(2^4\cdot 3)\mid I$ and $(2^6\cdot 3^3)\mid J$.  Then $g$ is $\PGL_2(\QQ)$-equivalent to an element of $V_\ZZ$.
\end{lemma}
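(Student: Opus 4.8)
The plan is to prove this as a local-to-global statement: first show that for each prime $p$ the form $g$ is $\GL_2(\QQ_p)$-equivalent (for the twisted action) to a $p$-integral form, and then glue these local transformations into a single element of $\GL_2(\QQ)$ carrying $g$ into $V_\ZZ$. Since the twisted action descends to $\PGL_2$, such a $\gamma$ exhibits the required integral representative in the $\PGL_2(\QQ)$-orbit of $g$.

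For the local step, fix a prime $p$. As $g$ is locally soluble, the associated genus-one curve $z^2=g(x,y)$ has a $\QQ_p$-point, so there is a primitive pair $(x_0:y_0)\in\PP^1(\QQ_p)$ at which $g$ takes a square value (a rational root being the degenerate case). Moving this point to $(1:0)$ by an element of $\SL_2(\ZZ_p)$ arranges that the leading coefficient of $g$ is a square in $\QQ_p$ (or zero): the $\det^{-2}$ twist alters values only by squares, so solubility is precisely what controls the leading coefficient. With the leading coefficient a square, diagonal scaling by a suitable power of $p$ clears denominators while fixing the invariants $(I,J)$, producing a $\GL_2(\QQ_p)$-translate of $g$ lying in $V_{\ZZ_p}$. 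For $p\geq 5$ no hypothesis beyond $I,J\in\ZZ_p$ is needed; this is essentially \cite{MR146143}*{Lemma 3}.

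The arithmetic at $p=2$ and $p=3$ is the crux, and is where the divisibility hypotheses $2^4\cdot 3\mid I$ and $2^6\cdot 3^3\mid J$ enter. At these primes the minimization above degenerates: the valuation of the leading coefficient no longer forces integrality through the invariants, and one must instead exhibit integral representatives directly by a finite case analysis on the $2$-adic and $3$-adic structure of $g$. The stated divisibility of $I$ and $J$ is exactly calibrated to the normalization of the model $y^2=x^3-\tfrac{I}{3}x-\tfrac{J}{27}$, so that such integral representatives always exist; this is the content of \cite{MR146143}*{Lemmas 4 and 5}. I expect this to be the main obstacle, both because the case analysis is genuinely more involved and because the conclusion fails without these extra powers of $2$ and $3$.

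Finally, to globalize, I observe that $g$ already lies in $V_{\ZZ_p}$ for all but the finitely many $p$ dividing a common denominator, so at such primes the identity serves as the local transformation. For each $p$ the set of $\gamma\in\GL_2(\QQ_p)$ with $\gamma\cdot g\in V_{\ZZ_p}$ is open and right-$\GL_2(\ZZ_p)$-stable, and contains $\GL_2(\ZZ_p)$ at the good primes. Strong approximation for $\SL_2$, together with the fact that the class number of $\GL_2$ over $\QQ$ is $1$ (handling the determinant), then produces a single $\gamma\in\GL_2(\QQ)$ simultaneously approximating each local $\gamma_p$ closely enough to satisfy its open integrality condition. Hence $\gamma\cdot g\in V_{\ZZ_p}$ for every $p$, so $\gamma\cdot g\in V_\ZZ$, as desired.
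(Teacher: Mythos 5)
First, note that the paper does not actually prove this lemma: it is quoted wholesale from Birch--Swinnerton-Dyer (Lemmas 3, 4, 5) and Bhargava--Shankar (Lemma 3.4), so the only meaningful comparison is with those references. Your overall architecture --- integralize $g$ one prime at a time and then glue the local transformations into a single element of $\GL_2(\QQ)$ using that $\GL_2/\QQ$ has class number one --- is a faithful reformulation of what Birch--Swinnerton-Dyer do (they work prime by prime with matrices in $\GL_2(\ZZ[1/p])$, which amounts to the same gluing), and your globalization step is sound as described.

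The genuine gap is in the local step at a single prime. From local solubility you correctly extract an element of $\SL_2(\ZZ_p)$ after which the leading coefficient of $g$ is a square in $\QQ_p$, but the claim that ``diagonal scaling by a suitable power of $p$ clears denominators while fixing the invariants'' fails. Under $\mathrm{diag}(\lambda,\mu)$ the coefficients $(a,b,c,d,e)$ scale by $(\lambda^2\mu^{-2},\,\lambda\mu^{-1},\,1,\,\lambda^{-1}\mu,\,\lambda^{-2}\mu^2)$: the valuations of $a$ and $e$ move in opposite directions and $c$ is fixed outright, so if $c$ has a denominator, or both ends of the form do, no diagonal scaling can integralize $g$. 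Relatedly, your sketch never actually invokes the integrality of $I$ and $J$ at primes $p\geq 5$, even though you correctly assert it is needed there; it is exactly this integrality that bounds denominators from below (since $I$ and $J$ are homogeneous of degrees $2$ and $3$ in the coefficients, replacing $g$ by $p^{-1}g$ divides $(I,J)$ by $(p^2,p^3)$), and the real content of Birch--Swinnerton-Dyer's Lemma 3 is to thread the solubility through a normal form for the quartic and use $I,J\in\ZZ_p$ to control each coefficient in turn. Finally, the cases $p\in\{2,3\}$ --- which you yourself identify as the crux, and which are where the hypotheses $2^4\cdot 3\mid I$ and $2^6\cdot 3^3\mid J$ do their work --- are deferred entirely to the citations. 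Since the paper also simply cites these lemmas you are not doing worse than the source, but as written the proposal is an outline whose one worked step does not go through, rather than a proof.
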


Since each elliptic curve $E/\QQ$ is isomorphic to the elliptic curve defined by the equation $y^2=x^3-\tfrac{2^4I(E)}{3}x-\tfrac{2^6J(E)}{27}$, \Cref{lem:loc sol BQF with decent invts are repd by integral BQF} and \Cref{prop:2SelmerElementsAsBQF} imply the following key theorem.

\begin{theorem}[\cite{MR3272925}*{Theorem 3.5}]\label{thm:Integral BQF with proper invariants correspond to selmer elements}
    Let $(A,B)\in\Epsilon$, and specify the Weierstrass equation for $E\cong E_{A,B}$ to be
    \[E:y^2=x^3+2^4Ax+2^6B.\]
    Then using the notation of \Cref{thm:CremonaFisherSummary} for the data $(E,\QQ)$: the map $\delta'$ induces a bijection between $\PGL_2(\QQ)$-equivalence classes of locally soluble binary quartic forms $g\in V_\ZZ$ with invariants $(I(g),J(g))=(2^4I(E),2^6J(E))$ and the $2$-Selmer group $\sel{2}(E/\QQ)$.

    Furthermore, the set of $g\in V_\ZZ$ having a linear factor (over $\QQ$) and invariants $(2^4I(E),2^6J(E))$ lie in a single $\PGL_2(\QQ)$-orbit, and this orbit maps to the identity element of $\sel{2}(E/\QQ)$.
\end{theorem}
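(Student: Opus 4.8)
The plan is to deduce this directly from the rational statement \Cref{prop:2SelmerElementsAsBQF} together with the integrality lemma \Cref{lem:loc sol BQF with decent invts are repd by integral BQF}, the only real content being to confirm that the inflated model $y^2=x^3+2^4Ax+2^6B$ places us exactly in the range where the latter applies. First I would record the bookkeeping identity matching the two Weierstrass models. Writing $(I,J)=(2^4I(E),2^6J(E))$ and using $I(E)=-3A$, $J(E)=-27B$ from \Cref{def:I(E) and J(E)}, one computes $-\tfrac{I}{3}=2^4A$ and $-\tfrac{J}{27}=2^6B$, so the curve $E^{I,J}:y^2=x^3-\tfrac{I}{3}x-\tfrac{J}{27}$ of \Cref{thm:CremonaFisherSummary} is literally the model $y^2=x^3+2^4Ax+2^6B$ in the statement. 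Hence \Cref{prop:2SelmerElementsAsBQF}, applied to the data $(E,\QQ)$ with this equation, already gives that $\delta'$ is a bijection from the set of $\PGL_2(\QQ)$-orbits of locally soluble forms $g\in V_\QQ$ with $(I(g),J(g))=(I,J)$ onto $\sel{2}(E/\QQ)$, the orbit of forms with a rational linear factor mapping to the identity.

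It then remains only to replace rational orbits by $\PGL_2(\QQ)$-classes of integral forms, and this is where the choice of model pays off. Since $A,B\in\ZZ$, the invariants $I=-2^4\cdot 3\,A$ and $J=-2^6\cdot 3^3\,B$ are integers satisfying $(2^4\cdot 3)\mid I$ and $(2^6\cdot 3^3)\mid J$, which are precisely the divisibility hypotheses of \Cref{lem:loc sol BQF with decent invts are repd by integral BQF}. Thus every locally soluble $g\in V_\QQ$ with these invariants is $\PGL_2(\QQ)$-equivalent to an integral form. I would then observe that a $\PGL_2(\QQ)$-equivalence class of integral forms is exactly the intersection with $V_\ZZ$ of a single $\PGL_2(\QQ)$-orbit in $V_\QQ$; since local solubility is a $\PGL_2(\QQ)$-invariant of an orbit, the assignment sending such an integral class to its ambient rational orbit is an injection from the set of $\PGL_2(\QQ)$-classes of locally soluble integral forms with invariants $(I,J)$ into the set of locally soluble rational orbits with invariants $(I,J)$. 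The integrality lemma says every orbit in the target meets $V_\ZZ$, so this injection is in fact a bijection, and composing with the bijection $\delta'$ of the previous paragraph yields the asserted bijection with $\sel{2}(E/\QQ)$.

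For the final clause I would argue that the distinguished orbit is handled by the same mechanism. By \Cref{prop:2SelmerElementsAsBQF} the rational forms with a linear factor over $\QQ$ and invariants $(I,J)$ form a single $\PGL_2(\QQ)$-orbit, and as it maps to the identity of $\sel{2}(E/\QQ)$ it is in particular locally soluble; hence by the integrality lemma it meets $V_\ZZ$ and corresponds to exactly one integral class. Every integral form with a rational linear factor and invariants $(I,J)$ lies in this rational orbit, so all such integral forms fall into that single class, which maps to the identity.

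I do not expect a genuine obstacle here: the whole argument is a transfer from $\QQ$ to $\ZZ$, and the only thing requiring verification is the pair of divisibility conditions $(2^4\cdot 3)\mid I$ and $(2^6\cdot 3^3)\mid J$ — precisely the reason the statement inflates $(A,B)$ to $(2^4A,2^6B)$ rather than working with the minimal model. The one point meriting a line of care is the identification of a $\PGL_2(\QQ)$-class of integral forms with the trace on $V_\ZZ$ of a rational orbit, which makes injectivity of the transfer map automatic so that only surjectivity, supplied by \Cref{lem:loc sol BQF with decent invts are repd by integral BQF}, needs input.
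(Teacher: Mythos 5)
Your argument is correct and is essentially the paper's own: the text derives this theorem in one sentence from exactly the two ingredients you use, namely \Cref{prop:2SelmerElementsAsBQF} for the rational statement and \Cref{lem:loc sol BQF with decent invts are repd by integral BQF} for integrality, with the inflated model chosen precisely so that $(2^4\cdot 3)\mid I$ and $(2^6\cdot 3^3)\mid J$. Your extra care in identifying a $\PGL_2(\QQ)$-class of integral forms with the trace on $V_\ZZ$ of a rational orbit is a correct (and welcome) elaboration of what the paper leaves implicit.
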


We now improve \Cref{cor:solBQFsAreE/2} (in the case that $F$ is a local field) to determine which locally soluble binary quartic forms correspond to intersections of the groups $\SSS_v^{(\theta)}$.  We will then improve \Cref{thm:Integral BQF with proper invariants correspond to selmer elements}, to also describe the corestriction Selmer group in terms of binary quartic forms.

\begin{definition}
Let $F$ be a characteristic $0$ field, and $S\leq F^\times/F^{\times2}$ be a finite subgroup.  Then we say that a binary quartic form $g(X,Y)\in V_F$ is $(F,S)$-soluble if for each $\theta\in S$ there exist $x,y,z\in F$ with $(x,y)\neq (0,0)$ such that
\[z^2=\theta g(x,y).\]
Moreover, we say that $g\in V_\QQ$ is locally $S$-soluble if for every place $v\in\places_\QQ$, $g$ is $(\QQ_v, S)$-soluble ($S$ here being interpreted as its image in $\QQ_v^\times/\QQ_v^{\times2}$).
\end{definition}
\begin{rem}
    This is clearly well defined, since altering $\theta$ by a square scales the chosen $z$-coordinate.  Further note that being $(F,\gp{1})$-soluble is the same as being $F$-soluble, and in fact being $(F,S)$-soluble always requires at least being $F$-soluble. 
\end{rem}

Before we can describe image of $(F,S)$-soluble binary quartic forms under the correspondence of \Cref{cor:solBQFsAreE/2}, we will require a helpful lemma.

\begin{lemma}\label{thm:BQF rep of twisted Kummer}
    Let $F$ be a characteristic $0$ field, and let $E/F$ be an elliptic curve with fixed Weierstrass equation $E:y^2=x^3+\tfrac{I}{3}x-\tfrac{J}{27}$.  Let $\theta\in F^\times$ be nonsquare, and fix the following model for the quadratic twist $E_{\theta}$
    \[E_{\theta}:y^2=x^3-\tfrac{\theta^2 I}{3}x-\tfrac{\theta^3J}{27}.\]
    Write $\varphi_\theta:E_{\theta}\to E$ for the isomorphism (over $F(\sqrt{\theta})$) given by $(x,y)\mapsto (\frac{x}{\theta}, \frac{y}{\theta\sqrt{\theta}})$.  This map restricts to an isomorphism (over $F$) $E_{\theta}[2]\to E[2]$, and the diagram below commutes:
    \[\begin{tikzcd}
        H^1(F, E_{\theta}[2])\ar[r, "\varphi_\theta^*"]&H^1(F, E[2])\\
        \set{\substack{\PGL_2(F)\textnormal{-equivalence classes of}\\\textnormal{binary quartic forms }g\in V_F\\\textnormal{with }(I(g),J(g))=(\theta^2I,\theta^3J)}}\ar[r, "\phi_\theta"]\ar[u, "\delta'"]
        &\set{\substack{\PGL_2(F)\textnormal{-equivalence classes of}\\\textnormal{binary quartic forms }g\in V_F\\\textnormal{with }(I(g),J(g))=(I,J)}}\ar[u, "\delta'"],
    \end{tikzcd}\]
    where the vertical maps are those described in \Cref{thm:CremonaFisherSummary} for the data $(E_{\theta},F)$ and $(E,F)$ respectively.  The map $\phi_\theta$ is given by sending the equivalence class of a binary quartic form $g$ to that of $\theta^{-1} g$.
\end{lemma}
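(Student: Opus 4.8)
The plan is to factor both vertical maps through the cubic invariant $z$ and thereby reduce the assertion to a statement purely about binary quartic forms and étale algebras. By \Cref{thm:CremonaFisherSummary} the isomorphism $\alpha$ satisfies $\alpha\circ\delta'=z$ on each side, so $\delta'_E=\alpha_E^{-1}\circ z_E$ and $\delta'_{E_\theta}=\alpha_{E_\theta}^{-1}\circ z_{E_\theta}$; proving that the square commutes then amounts to understanding how $\alpha$ and $z$ transform under twisting by $\theta$. First I would record the two elementary preliminaries. On the one hand, $\varphi_\theta$ carries $2$-torsion (where $y=0$) to $2$-torsion by $(x,0)\mapsto(x/\theta,0)$ with no $\sqrt{\theta}$ appearing, hence is genuinely defined over $F$ on $E_\theta[2]$. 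On the other hand, $\phi_\theta$ is well defined: since $I$ and $J$ are homogeneous of degrees $2$ and $3$ in the coefficients of a form, $\theta^{-1}g$ has invariants $(I,J)$ whenever $g$ has invariants $(\theta^2 I,\theta^3 J)$, and the scaling $g\mapsto\theta^{-1}g$ commutes with the $\PGL_2(F)$-action.

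Next I would set up the identification of the two étale algebras. Writing $f_E(X)=X^3-3IX+J$ with roots $\epsilon_1,\epsilon_2,\epsilon_3$, a direct substitution $X=\theta\epsilon_i$ shows that $f_{E_\theta}(X)=X^3-3\theta^2 IX+\theta^3 J$ has roots $\theta\epsilon_1,\theta\epsilon_2,\theta\epsilon_3$. Hence there is a canonical $F$-algebra isomorphism $\iota\colon L(E_\theta)\to L(E)$ sending the class of $\epsilon'$ to $\theta\epsilon$, compatible with the chosen embeddings into $\bar F\times\bar F\times\bar F$. Because $\iota$ acts coordinatewise it preserves the norms, and so restricts to an isomorphism $\ker(N_{L(E_\theta)/F})\cong\ker(N_{L(E)/F})$, inducing a map $\iota_*$ on $L^\times/L^{\times2}$.

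Then I would establish the cohomological compatibility $\alpha_E\circ\varphi_\theta^{*}=\iota_{*}\circ\alpha_{E_\theta}$. This follows from the Weil-pairing description of $\alpha$: since $\varphi_\theta$ is an isomorphism of elliptic curves over $F(\sqrt{\theta})$ it preserves the Weil pairing (cf.\ the proof of \Cref{lem:twisted Kummer image is selfdual}) and sends $(\theta\epsilon_i,0)$ to $(\epsilon_i,0)$, whence $e_2^{E_\theta}(P,(\theta\epsilon_i,0))=e_2^{E}(\varphi_\theta P,(\epsilon_i,0))$ coordinatewise. This is precisely the statement that $w_E\circ\varphi_\theta$ equals the coordinate identification composed with $w_{E_\theta}$, and applying the Kummer isomorphism $\kappa$ gives the claim. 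Combining this with $\delta'=\alpha^{-1}\circ z$ reduces commutativity of the diagram to the single identity $\iota_*\circ z_{E_\theta}=z_E\circ\phi_\theta$ on forms.

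Finally I would verify this identity by an explicit computation with the irrational covariant, which is the main computational content. For a form $g$ with invariants $(\theta^2 I,\theta^3 J)$ and a suitable point $(x,y)\in F^2$, one has $z_{E_\theta}(g)=\tfrac13\bigl(4\epsilon' g(x,y)+g_4(g;x,y)\bigr)$, so $\iota_*(z_{E_\theta}(g))=\tfrac13\bigl(4\theta\epsilon\,g(x,y)+g_4(g;x,y)\bigr)$. On the other side, since $g\mapsto\theta^{-1}g$ scales $g$ linearly but scales the covariant $g_4$ (quadratic in the coefficients) by $\theta^{-2}$, one finds $z_E(\theta^{-1}g)=\tfrac13\bigl(4\epsilon\,\theta^{-1}g(x,y)+\theta^{-2}g_4(g;x,y)\bigr)=\theta^{-2}\cdot\tfrac13\bigl(4\theta\epsilon\,g(x,y)+g_4(g;x,y)\bigr)$, where I use that $z$ is independent of the evaluation point (\Cref{thm:CremonaFisherSummary}) to reuse the same $(x,y)$. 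As $\theta^{-2}\in F^{\times2}\subseteq L^{\times2}$, the two elements agree in $L^\times/L^{\times2}$, completing the reduction. I expect the genuine obstacle to be the third step---pinning down exactly how $\varphi_\theta^{*}$ intertwines the two $\alpha$-maps, i.e.\ checking that the bookkeeping of $2$-torsion points and the roots $\theta\epsilon_i$ matches the coordinatewise algebra isomorphism $\iota$---rather than the final homogeneity computation, which is routine once the scaling degrees of $g$ and $g_4$ are in hand.
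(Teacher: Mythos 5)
Your proposal is correct and follows essentially the same route as the paper's proof: you identify the \'etale algebras via $\epsilon\mapsto\theta\epsilon$ (the paper's map $\beta$, your $\iota$), use preservation of the Weil pairing to intertwine the two $\alpha$-maps, reduce commutativity of the $\delta'$-square to the corresponding square for the cubic invariant $z$ by injectivity of $\alpha$, and close with the same homogeneity computation on the irrational covariant, where the discrepancy $\theta^{-2}$ is absorbed as a square in $L^\times/L^{\times2}$. No substantive differences to flag.
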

\begin{proof}
    Note firstly that the new map $\phi_\theta$ is well defined: scalar multiplication commutes with the action of $\PGL_2(F)$ on the forms, and the claimed invariants in the image are correct as $I(g)$ and $J(g)$ are homogeneous of degree $2$ and $3$ in the coefficients of the associated form $g$.

    Write $L:=L(E)=F[\epsilon]/\epsilon^3-3I\epsilon+J$ and $L_\theta:=L(E_{\theta})=F[\epsilon]/\epsilon^3-3\theta^2I\epsilon+\theta^3J$ for the \'etale algebras associated to the data of $(E,F)$ and $(E_{\theta},F)$ by \Cref{thm:CremonaFisherSummary}.  Moreover, let us write
        \begin{align*}
            \beta:L_\theta&\to L\\
            \epsilon&\mapsto \theta\epsilon.
    \end{align*}
    We begin by noting that the following diagram commutes:
    \begin{equation}
        \begin{tikzcd}
            L_\theta\ar[d, "\kappa"]\ar[r, "\beta"]&L\ar[d, "\kappa"]\\
            \bar{L}\ar[r, equal]&\bar{L}
        \end{tikzcd}
    \end{equation}
    where $\kappa$ are the Kummer maps induced by the inclusions $L, L_\theta\subset\bar{L}$ fixed in \Cref{thm:CremonaFisherSummary}.  Then, since the Weil pairing is preserved by the isomorphism $\varphi_\theta$, it is then clear from the definition of the maps $\alpha$ of \Cref{thm:CremonaFisherSummary} that the diagram below commutes:
    \[\begin{tikzcd}
        H^1(F, E_{\theta}[2])\ar[r, "\varphi_\theta^*"]\ar[d, "\alpha"]&H^1(F, E[2])\ar[d, "\alpha"]\\
        \ker(N_{L_\theta/F}:L_\theta^\times/L_\theta^{\times2}\to K^\times/K^{\times2})\ar[r, "\beta"]
        &\ker(N_{L/F}:L^\times/L^{\times2}\to K^\times/K^{\times2}),
    \end{tikzcd}\]
    where the vertical maps are those described in \Cref{thm:CremonaFisherSummary} for the data $(E_{\theta},F)$ and $(E,F)$ respectively.

    Using the commutativity of the diagram in \Cref{thm:CremonaFisherSummary} for the data $(E,F)$ and $(E_{\theta},F)$, and that the $\alpha$-maps are injective, we then see that the claim that $\delta'\circ \phi_\theta=\varphi_\theta^*\circ \delta'$ holds if and only if the diagram below commutes:
    \[\begin{tikzcd}
        \ker(N_{L_\theta/F}:L_\theta^\times/L_\theta^{\times2}\to K^\times/K^{\times2})\ar[r, "\beta"]
        &\ker(N_{L/F}:L^\times/L^{\times2}\to K^\times/K^{\times2})\\
        \set{\substack{\PGL_2(F)\textnormal{-equivalence classes of}\\\textnormal{binary quartic forms }g\in V_F\\\textnormal{with }(I(g),J(g))=(\theta^2I,\theta^3J)}}\ar[r, "\phi_\theta"]\ar[u, "z"]
        &\set{\substack{\PGL_2(F)\textnormal{-equivalence classes of}\\\textnormal{binary quartic forms }g\in V_F\\\textnormal{with }(I(g),J(g))=(I,J)}}\ar[u, "z"],
    \end{tikzcd}\]
    but this follows from the definition of the maps $z$:  if $g$ is a binary quartic form with invariants $(\theta^2I, \theta^3J)$ then (for an appropriate choice of $(x,y)\in F\times F$)
    \begin{align*}
        \beta\circ z(g)&=\beta\left(\frac{1}{3}(4\epsilon g(x,y)+g_4(g;x,y))\right)\\
        &=\frac{1}{3}(4\theta\epsilon g(x,y)+g_4(g;x,y))
        \\&\equiv\frac{1}{3}(4\theta^{-1}\epsilon g(x,y)+\theta^{-2}g_4(g;x,y))
        \\&=\frac{1}{3}(4\epsilon \theta^{-1}g(x,y)+g_4(\theta^{-1}g;x,y))
        \\&=z\circ\phi_\theta(g)
    \end{align*}
\end{proof}

Using \Cref{thm:BQF rep of twisted Kummer} we are then able to describe the local groups of the corestriction Selmer structure in terms of the correspondence to binary quartic forms from \Cref{cor:solBQFsAreE/2}.

\begin{corollary}\label{cor:local norms correspond to S-soluble BQFS}
    Let $F$ be a number field, and $K/F$ be a Galois extension.  Let $v\in\places_F$, assume that $w\in\places_K$ is a place extending $v$ such that $K_w/F_v$ is multiquadratic, and write $S:=\ker(F_v^\times/F_v^{\times2}\to K_w^\times/K_w^{\times2})$.  Let $E/F_v$ be an elliptic curve, with specified Weierstrass equation 
    \[E:y^2=x^3-\tfrac{I}{3}x-\tfrac{J}{27}.\]

    Using the notation of \Cref{thm:CremonaFisherSummary} for the data $(E,F_v)$ above, the map $\delta'$ restricts to a bijection between $\PGL_2(F)$-orbits of $(F,S)$-soluble binary quartic forms with invariants $I,J$ and elements of $\bigcap_{\theta\in S}\SSS_v^{(\theta)}(F;E)$.  Under this bijection, the identity element corresponds to the (unique!) $\PGL_2(F)$-orbit of binary quartic forms possessing a linear factor over $F$.

\end{corollary}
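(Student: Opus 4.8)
The plan is to reduce the statement one element $\theta\in S$ at a time to the correspondence of \Cref{cor:solBQFsAreE/2} for the twisted curve $E_\theta$, and then to intersect over $S$ using the injectivity of $\delta'$. Throughout I work over the local field $F_v$, which is the ground field meant by the solubility and $\PGL_2$ conditions. The first observation is purely definitional: a form $g\in V_{F_v}$ with invariants $(I,J)$ is $(F_v,S)$-soluble precisely when, for every $\theta\in S$, the scaled form $\theta g$ is $F_v$-soluble, since the defining relation $z^2=\theta g(x,y)$ of the $\theta$-clause is literally the assertion that $\theta g$ is $F_v$-soluble.

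Next I would pin down, for a single $\theta\in S$, the preimage of $\SSS_v^{(\theta)}(F_v;E)$ under $\delta'$. By \Cref{def:twistedKummer} we have $\SSS_v^{(\theta)}(F_v;E)=\varphi_\theta^*\bigl(\SSS_v(F_v;E_\theta)\bigr)$, and \Cref{cor:solBQFsAreE/2} applied to $E_\theta$ (whose model has invariants $(\theta^2I,\theta^3J)$) identifies $\SSS_v(F_v;E_\theta)$ with the image under $\delta'$ of the $F_v$-soluble orbits of invariants $(\theta^2I,\theta^3J)$. Feeding this through the commutative square of \Cref{thm:BQF rep of twisted Kummer}, which reads $\varphi_\theta^*\circ\delta'=\delta'\circ\phi_\theta$ with $\phi_\theta(h)=\theta^{-1}h$, gives
\[\SSS_v^{(\theta)}(F_v;E)=\delta'\Bigl(\phi_\theta\bigl(\{\,h : h\ F_v\text{-soluble, with }(I(h),J(h))=(\theta^2I,\theta^3J)\,\}\bigr)\Bigr).\]
Writing $g=\theta^{-1}h$, so that $h=\theta g$ has invariants $(\theta^2I,\theta^3J)$ exactly when $g$ has invariants $(I,J)$, the bracketed set becomes the orbits $g$ of invariants $(I,J)$ for which $\theta g$ is $F_v$-soluble. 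Since $\delta'$ is injective (\Cref{thm:CremonaFisherSummary}), this yields the clean criterion: $\delta'(g)\in\SSS_v^{(\theta)}(F_v;E)$ if and only if $\theta g$ is $F_v$-soluble.

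Finally I would intersect over $\theta\in S$. Because $1\in S$, the intersection is contained in $\SSS_v^{(1)}(F_v;E)=\SSS_v(F_v;E)$, so any class in $\bigcap_{\theta\in S}\SSS_v^{(\theta)}(F_v;E)$ equals $\delta'(g)$ for a single $F_v$-soluble orbit $g$ of invariants $(I,J)$, unique by injectivity. Membership in each $\SSS_v^{(\theta)}(F_v;E)$ then forces $\theta g$ to be $F_v$-soluble for every $\theta\in S$, i.e. $g$ is $(F_v,S)$-soluble; conversely every $(F_v,S)$-soluble orbit maps into each $\SSS_v^{(\theta)}(F_v;E)$ and hence into the intersection, exhibiting $\delta'$ as the asserted bijection. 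For the final sentence, an orbit with a linear factor over $F_v$ has a nonzero point with $g(x_0,y_0)=0$, so $z=0$ witnesses $(F_v,S)$-solubility for any $S$; combined with the identification of this orbit with the identity of $\SSS_v(F_v;E)$ in \Cref{cor:solBQFsAreE/2}, it corresponds to the identity of the intersection. The only real bookkeeping is tracking the invariants correctly through $\phi_\theta$ and invoking injectivity of $\delta'$ to guarantee that one and the same orbit $g$ serves all $\theta$ simultaneously; beyond this there is no genuine obstacle.
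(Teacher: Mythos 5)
Your proposal is correct and follows essentially the same route as the paper: both reduce to the single-twist case via \Cref{thm:BQF rep of twisted Kummer} combined with \Cref{cor:solBQFsAreE/2} to identify $\SSS_v^{(\theta)}(F;E)$ with the $\delta'$-image of orbits $g$ of invariants $(I,J)$ for which $\theta g$ is $F_v$-soluble, and then intersect over $\theta\in S$. The only difference is that you spell out the invariant bookkeeping through $\phi_\theta$ and the role of injectivity of $\delta'$, which the paper leaves implicit.
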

\begin{proof}
    By \Cref{thm:BQF rep of twisted Kummer} and \Cref{cor:solBQFsAreE/2} $\SSS_v^{(\theta)}(F;E)$ corresponds through $\delta'$ to the equivalence classes of binary quartic forms $g\in V_F$ with invariants $I,J$ such that $\theta g$ is $F$-soluble.  Thus the intersection corresponds precisely to the equivalence classes of $(F,S)$-soluble binary quartic forms.  That the second statement is equivalent is clear from \Cref{thm:CremonaFisherSummary}.
\end{proof}

We can now strengthen \Cref{thm:Integral BQF with proper invariants correspond to selmer elements} to identify the corestriction Selmer group as a subset of the equivalence classes of locally soluble binary quartic forms.

\begin{theorem}\label{thm:Corestriction Selmer corresponds to loc S-sol integral BQFs}
    Let $K/\QQ$ be a multiquadratic extension, and $S:=\ker(\QQ^\times/\QQ^{\times2}\to K^\times/K^{\times2})$.  Let $(A,B)\in\Epsilon$, and specify the Weierstrass equation for $E=E_{A,B}$ to be
    \[E:y^2=x^3+2^4Ax+2^6B.\]
    Then using the notation of \Cref{thm:CremonaFisherSummary} for the data $(E,\QQ)$: the map $\delta'$ induces a bijection between $\PGL_2(\QQ)$-equivalence classes of locally $S$-soluble binary quartic forms $g\in V_\ZZ$ with invariants $(I(g),J(g))=(2^4I(E),2^6J(E))$ and the intersection $\bigcap_{\theta\in S}\varphi_\theta^*\sel{2}(E_{\theta}/\QQ)\subseteq H^1(\QQ, E[2])$.

    Furthermore, the set of $g\in V_\ZZ$ having a linear factor (over $\QQ$) and invariants $(2^4I(E),2^6J(E))$ lie in a single $\PGL_2(\QQ)$-orbit, and this orbit maps to the identity element.
\end{theorem}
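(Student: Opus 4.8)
The plan is to realise the intersection $\bigcap_{\theta\in S}\varphi_\theta^*\sel{2}(E_\theta/\QQ)$ as the corestriction Selmer group of \Cref{def:cores selmer} and then to cut it out of $\sel{2}(E/\QQ)$ one place at a time, matching the resulting local conditions against local $S$-solubility of a representing quartic. First I would invoke \Cref{prop:Multiquadratic extn desc of CandF with Twisted Kummers} to rewrite the intersection as the corestriction Selmer group $\sel{\CCC(K)}(\QQ,E[2])=\set{x\in H^1(\QQ,E[2]):\res_v(x)\in\CCC_v(K/\QQ;E)\ \forall v}$, whose local conditions are $\CCC_v(K/\QQ;E)=\bigcap_{\theta\in S}\SSS_v^{(\theta)}(\QQ;E)$. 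Since the trivial class always lies in $S$, we have $\CCC_v\subseteq\SSS_v^{(1)}=\SSS_v(\QQ;E)$ at every place, so this intersection is a subgroup of $\sel{2}(E/\QQ)$, the latter being cut out by the $\SSS_v(\QQ;E)$. In particular it suffices to decide which classes of $\sel{2}(E/\QQ)$ lie in it.

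Next I would run the parametrisation of \Cref{thm:Integral BQF with proper invariants correspond to selmer elements}, under which $\delta'$ identifies $\sel{2}(E/\QQ)$ with $\PGL_2(\QQ)$-classes of locally soluble integral quartics $g$ with invariants $(2^4I(E),2^6J(E))$. For such a $g$ corresponding to $x\in\sel{2}(E/\QQ)$, membership $x\in\sel{\CCC(K)}$ is the conjunction over all $v$ of $\res_v(x)\in\CCC_v$. The key point is that the Cremona--Fisher correspondence of \Cref{thm:CremonaFisherSummary} is natural in the base field, so that under the base change $\QQ\hookrightarrow\QQ_v$ the class $\res_v(x)$ is represented by the image of $g$ in $V_{\QQ_v}$; granting this, \Cref{cor:local norms correspond to S-soluble BQFS} applied at $v$ translates $\res_v(x)\in\CCC_v=\bigcap_{\theta\in S}\SSS_v^{(\theta)}$ into the assertion that $g$ is $(\QQ_v,S)$-soluble. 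Running over all $v$ shows that $x\in\sel{\CCC(K)}$ precisely when $g$ is locally $S$-soluble; and since a locally $S$-soluble form is in particular locally soluble, its $\PGL_2(\QQ)$-class still has an integral representative by \Cref{lem:loc sol BQF with decent invts are repd by integral BQF}, so integrality is inherited and the claimed bijection follows.

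For the final assertion I would observe that a quartic $g\in V_\ZZ$ with a rational linear factor has a zero $(x_0,y_0)\in\QQ^2\setminus\set{(0,0)}$; then $(x_0,y_0,0)$ witnesses $\QQ$-solubility of $\theta g$ for every $\theta$, so such $g$ is $(\QQ,S)$-soluble, hence locally $S$-soluble. By \Cref{thm:Integral BQF with proper invariants correspond to selmer elements} these forms already form a single $\PGL_2(\QQ)$-orbit mapping to the identity of $\sel{2}(E/\QQ)$, and the identity certainly lies in the subgroup $\sel{\CCC(K)}$; thus this orbit appears among the locally $S$-soluble classes and maps to the identity of the intersection.

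The main obstacle I expect is the compatibility of $\delta'$ with localisation used in the second paragraph: one must check that restricting a global $2$-Selmer class to $\QQ_v$ corresponds, under the local instance of \Cref{thm:CremonaFisherSummary}, to simply viewing the representing integral quartic over $\QQ_v$. This is a functoriality statement for the maps $\alpha$ (Weil pairing and Kummer map), $z$ (cubic invariant) and $\delta'$ ($2$-covering), each defined by base-field-natural constructions, so the verification is routine but must be made explicit in order to legitimately splice the global parametrisation together with the purely local description of \Cref{cor:local norms correspond to S-soluble BQFS}. A secondary point to keep straight is that the group $S$ of \Cref{cor:local norms correspond to S-soluble BQFS} is the \emph{local} norm kernel at $v$, which has to be reconciled with the global $S$ appearing here exactly as is already done in \Cref{prop:Multiquadratic extn desc of CandF with Twisted Kummers}.
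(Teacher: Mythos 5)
Your proposal is correct and takes essentially the same route as the paper: the paper's proof likewise combines the global parametrisation of \Cref{thm:Integral BQF with proper invariants correspond to selmer elements} with the place-by-place translation of \Cref{cor:local norms correspond to S-soluble BQFS} (identifying $\bigcap_{\theta\in S}\SSS_v^{(\theta)}$ with the $(\QQ_v,S_v)$-soluble classes), working directly with the local intersections rather than naming the corestriction Selmer group. The compatibility of $\delta'$ with localisation that you flag as the main obstacle is left implicit in the paper's (much terser) argument, so your writeup is if anything more careful on that point.
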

\begin{proof}
    To ease discussion below, for each $v\in\places_\QQ$ let $\delta'_v$ be the map from \Cref{thm:CremonaFisherSummary} with the data $(E, \QQ_v)$.  Moreover write $S_v$ for the image of $S$ in $\QQ_v^\times/\QQ_v^{\times2}$.

    Let $A$ be the set of $\PGL_2(\QQ)$-equivalence classes of locally soluble binary quartic forms $g\in V_\ZZ$ having invariants $(I(g),J(g))=(2^4I, 2^6J)$ such that: for every $v\in\places_\QQ$, the map $\delta'_v$ maps the equivalence class of $g$ to an element of $\bigcap_{\theta\in S}\SSS_v^{(\theta)}(F;E)$.  By \Cref{cor:local norms correspond to S-soluble BQFS}, $\bigcap_{\theta\in S}\SSS_v^{(\theta)}(F;E)$ corresponds under the $\delta'$ to the set of equivalence classes of $(\QQ_v,S_v)$-soluble binary quartic forms $g\in V_{\QQ_v}$ with invariants $(2^4I,2^6J)$.  Now by \Cref{thm:Integral BQF with proper invariants correspond to selmer elements}, $\delta'$ induces a bijection between $\bigcap_{\theta\in S}\varphi_\theta^*\sel{2}(E_{\theta}/\QQ)\subseteq\sel{2}(E/\QQ)$ and the set $A$, as required.
\end{proof}    

\Cref{thm:Corestriction Selmer corresponds to loc S-sol integral BQFs} tells us which $\PGL_2(\QQ)$-equivalence classes of integral binary quartic forms $g\in V_\ZZ$ to count in order to determine the average of $\bigcap_{\theta\in S}\varphi_\theta^*\sel{2}(E_{A,B}^{(\theta)}/\QQ)$ as $(A,B)\in\Epsilon$ varies.

\section{Recalling Bhargava--Shankar}
\label{sec:RecallingBS}
In this section we recall necessary counting results and definitions due to Bhargava--Shankar \cite{MR3272925}.
\subsection{Elliptic Curves and Families}
\label{subsec:ECandFamilies}
Firstly, we have the invariants corresponding to each elliptic curve.
\begin{definition}[\cite{MR3272925}*{\S3}]\label{def:naiveheightofEC}
    For each pair $(I,J)\in\RR^2$ we define the height to be
    \[H(I,J):=\max\set{\abs{I}^3, J^2/4}.\]
    For an elliptic curve $E/\QQ$, with associated invariants $I:=I(E)$ and $J:=J(E)$ (see \Cref{def:I(E) and J(E)}), we define the height of $E$ to be
    \[H'(E):=H(I,J).\]
    The discriminant of the pair $(I,J)$ is defined to be
    \[\Delta'(I,J):=\frac{4I^3-J^2}{27}\]
\end{definition}
\begin{rem}
    Note that if we present $E=E_{A,B}$ for $(A,B)\in\Epsilon$ then it is easy to see that the notion of height introduced above differs from the naive height by a constant factor:
    \[H'(E)=\max\set{\abs{-3A}^3, (-27B)^2}=\frac{27}{4}\max\set{4\abs{A}^3, 27B^2},\]
    so the ordering on elliptic curves is equivalent to that of the naive height.  Moreover,
    \[\Delta'(I,J)=-(4A^3+27B^2)=-\Delta(A,B)\]
    recovers the discriminant of the given model of $E$.
\end{rem}

We now introduce the `large families' of elliptic curves to which the results of \cite{MR3272925} apply.
\begin{definition}[\cite{MR3272925}*{\S3}]\label{def:cong cdn family}
    For each prime $p$, let 
    \[\Sigma_p\subset \set{(I,J)\in\ZZ_p^2~:~ \Delta'(I,J)\neq 0}\]
    be a nonempty closed subset with boundary of measure 0.  Moreover let $\Sigma_\infty$ be one of the following:
    \begin{align*}
    &\set{(I,J)\in\RR^2~:~\Delta'(I,J)<0},\ \set{(I,J)\in\RR^2~:~\Delta'(I,J)>0},
    \\&\set{(I,J)\in\RR^2~:~\Delta'(I,J)\neq0}.
    \end{align*}
    Associated to the data $\Sigma=(\Sigma_v)_{v\in\places_\QQ}$, we have a family of elliptic curves over $\QQ$, 
    \[\cF_{\Sigma}:=\set{E/\QQ~:~(I(E), J(E))\in \Sigma_v\quad \forall v\in\places_\QQ}.\]
    A family of elliptic curves $\cF$ is said to be defined by congruence conditions if $\cF=\cF_\Sigma$ for some $\Sigma=(\Sigma_v)_{v\in\places_\QQ}$ as above.
\end{definition}
Associated to a family of curves which is defined by congruence conditions, we have some additional data.
\begin{notation}\label{not:invariants for large family}
Let $\cF=\cF_\Sigma$ be a family of elliptic curves defined by congruence conditions.  Then we have
    \begin{itemize}
         \item $\Inv(\cF):=\set{(I(E), J(E))~:~E\in\cF}$.
         \item For each prime number $p$, $\Inv_p(\cF)$ is the set of $(I,J)$ in the $p$-adic closure of $\Inv(\cF)$ in $\ZZ_p^2$ for which $\Delta'(I,J)\neq 0$.
         \item $\Inv_\infty(\cF):=\Sigma_\infty$.
    \end{itemize}
\end{notation}
\begin{rem}
    It is not, in this generality, true that $\Inv_p(\cF_\Sigma)=\Sigma_p$.  Take, for instance, 
    \[\Sigma_p=\set{(I,J)\in\ZZ_p^2~:~I\equiv J\equiv 0\mod p\textnormal{ and } \Delta'(I,J)\neq 0}\]
    for every prime number $p$.  Then of course $\cF_\Sigma=\emptyset$, so $\Inv_p(\cF_\Sigma)=\emptyset$.
\end{rem}
The families that can be studied with the analytic tools of \cite{MR3272925} are those defined by congruence conditions that satisfy an additional `largeness' axiom.
\begin{definition}\label{def:large family}
    A family $\cF$ of elliptic curves which is defined by congruence conditions is further called a large family if for all but finitely many primes $p$ the set $\Inv_p(\cF)$ contains all pairs $(I,J)\in\ZZ_p^2$ such that $p^2\nmid \Delta(I,J)$.
\end{definition}

\begin{rem}
    Rephrased in terms of the associated elliptic curves, this definition states that: for sufficiently large $p$, $\Sigma_p$ contains all elliptic curves with reduction types $I_0$ and $I_1$.  
\end{rem}
\subsection{Counting Binary Quartic Forms}\label{subsec:CountingBQF}
By \Cref{thm:Integral BQF with proper invariants correspond to selmer elements}, finding the average of $\sel{2}(E/\QQ)$ as $E$ varies in a large family is equivalent to counting $\PGL_2(\QQ)$-equivalence classes of elements $g\in V_\ZZ$ with certain invariants.  Bhargava and Shankar count $\PGL_2(\ZZ)$-orbits in $V_\ZZ$, rather than $\PGL_2(\QQ)$-equivalence classes.  We postpone the relationship between the two counting problems to the next subsection, and simply present the results.
\begin{definition}\label{def:height of BQF}
The height of a binary quartic form $f\in V_\QQ$ with invariants $I,J$ is defined to be
\[H(f):=\max\set{\abs{I}^3, J^2/4}\]
\end{definition}
\begin{rem}
    The height of a binary quartic form is a function of its invariants.  In particular, the binary quartic forms $g\in V_\ZZ$ which have $\PGL_2(\QQ)$-equivalence classes corresponding (via \Cref{thm:Integral BQF with proper invariants correspond to selmer elements}) to $2$-Selmer group elements for $E/\QQ$ have height
    \[H(g)=2^{12}H'(E).\]
    Thus counting $\PGL_2(\QQ)$-equivalence classes of binary quartic forms of bounded height corresponds to counting the elements of all $E_{A,B}$ of bounded naive height. 
\end{rem}

In order to apply local conditions (such as solubility) to the equivalence classes of binary quartic forms, we require a notion of acceptable congruence conditions.
\begin{definition}[\cite{MR3272925}*{\S2.7}]\label{def:AcceptableFunction}
    A function $\psi:V_\ZZ\to [0,1]\subset \RR$ is said to be defined by congruence conditions if, for all primes $p$, there exist functions $\psi_p:V_{\ZZ_p}\to [0,1]$ satisfying
    \begin{itemize}
        \item[(i)] For all $f\in V_\ZZ$, the product $\prod_{p}\psi_p(f)$ converges to $\psi(f)$. 
        \item[(ii)] For each prime $p$, the function $\psi_p$ is locally constant outside some closed set $S_p\subset V_{\ZZ_p}$ of measure zero.
    \end{itemize}
    If additionally for all but finitely many primes $p$, we have $\psi_p(f)=1$ whenever $p^2\nmid \Delta(f)$, then we say $\psi$ is acceptable.
\end{definition}

We then have our notation for the relevant counts.
\begin{definition}
    For each real number $X>0$, we define $N(V_\ZZ^{(i)};X)$ to be the number of $\PGL_2(\ZZ)$-equivalence classes of irreducible elements $f\in V_\ZZ^{(i)}$ satisfying $H(f)<X$

    Let $\psi$ be an acceptable function with corresponding local functions $\psi_p$ which are $\PGL_2(\ZZ)$-invariant.  Then we further define $N_\psi(V_\ZZ^{(i)};X)$ to be the number of $\PGL_2(\ZZ)$-orbits of irreducible elements $f\in V_\ZZ^{(i)}$ satisfying $H(f)<X$, where each equivalence class is counted with weight $\psi(f)$.
\end{definition}

We then have their main counting machine.
\begin{theorem}[\cite{MR3272925}*{Theorem 2.21}]\label{BS:Theorem 2.21}
    Let $\psi:V_\ZZ\to[0,1]$ be an acceptable function defined by congruence conditions via local functions $\psi_p:V_{\ZZ_p}\to [0,1]$ which are $\PGL_2(\ZZ)$-invariant.  Then for $i\in\set{0,1,2+,2-}$
    \[N_{\psi}(V_{\ZZ}^{(i)};X)=N(V_{\ZZ}^{(i)};X)\prod_p\int_{f\in V_{\ZZ_p}}\psi_p(f)df+o(X^{5/6})\]
\end{theorem}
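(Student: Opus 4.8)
The plan is to count $\PGL_2(\ZZ)$-orbits in $V_\ZZ^{(i)}$ directly by the geometry of numbers, and then to fold in the congruence weights $\psi_p$ by a sieve. First I would establish the \emph{unweighted} asymptotic $N(V_\ZZ^{(i)};X)\sim c_i X^{5/6}$ by realising orbits as lattice points in a fundamental domain $\cD$ for the action of $\PGL_2(\ZZ)$ on $V_\RR^{(i)}$. The naive domain is cuspidal, so I would use Bhargava's averaging device: rather than count lattice points in a single ill-behaved domain, average the count over the translates $g\cdot\cD$ as $g$ ranges over a fixed compact region of $\GL_2(\RR)$, and apply a Davenport-type lattice-point estimate (volume plus a boundary error) in each translate. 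The exponent $5/6$ reflects that the region $\set{H(I,J)<X}$ in the plane of invariants $(I,J)$ has area of order $X^{1/3}\cdot X^{1/2}=X^{5/6}$, since $\abs{I}\lesssim X^{1/3}$ and $\abs{J}\lesssim X^{1/2}$.

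The main obstacle at this stage is the cusp. One must show that irreducible integral forms do not accumulate in the cuspidal region: away from a negligible set, the integer points deep in the cusp correspond to \emph{reducible} forms, which are excluded from the count. Isolating the main term then reduces to a volume computation over the body of the domain. I would run this separately for each real type $i\in\set{0,1,2+,2-}$, since the number of connected components of $V_\RR^{(i)}$ and the real stabilisers differ between the definite, indefinite, and two-complex-root cases; this changes the constant $c_i$ but not the method.

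Next I would incorporate the weights. For a weight depending only on $f$ modulo a fixed modulus $M$, the count factors: partitioning $V_\ZZ$ into residue classes mod $M$ and rerunning the lattice-point count class by class multiplies the main term by the density $\prod_{p\mid M}\int_{V_{\ZZ_p}}\psi_p\,df$, with the same $o(X^{5/6})$ error. To reach the full product over all primes, I would truncate, writing $\psi^{(Y)}$ for the weight that uses only primes $p\le Y$, so that $N_{\psi^{(Y)}}(V_\ZZ^{(i)};X)=N(V_\ZZ^{(i)};X)\prod_{p\le Y}\int_{V_{\ZZ_p}}\psi_p\,df+o_Y(X^{5/6})$, and then control the difference $N_\psi-N_{\psi^{(Y)}}$ by the number of irreducible orbits with $H(f)<X$ for which some $\psi_p$ with $p>Y$ fails to be identically $1$. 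By the acceptability hypothesis of \Cref{def:AcceptableFunction}, this can only happen when $p^2\mid\Delta(f)$.

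The crux, and the step I expect to be hardest, is therefore a \emph{uniformity estimate}: a bound, uniform in $p$, on the number of $\PGL_2(\ZZ)$-orbits of irreducible $f\in V_\ZZ^{(i)}$ with $H(f)<X$ and $p^2\mid\Delta(f)$, strong enough that $X^{-5/6}\sum_{p>Y}\#\set{\text{such orbits}}\to 0$ as $Y\to\infty$, uniformly enough in $X$ to permit interchanging the limits. Establishing this tail bound — essentially that forms whose discriminant carries a large square factor are rare, even in the presence of the cusp — is exactly what forces the restriction to \emph{acceptable} weights, and it is the technical heart of the argument. Granting it, the truncated products converge to $\prod_p\int_{V_{\ZZ_p}}\psi_p\,df$, the various error terms assemble into a single $o(X^{5/6})$, and the stated formula follows.
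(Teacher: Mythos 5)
The paper does not prove this statement at all: it is quoted verbatim from Bhargava--Shankar \cite{MR3272925}*{Theorem 2.21} and used as a black box. Your sketch is a faithful outline of the actual argument in that source --- the averaging over translates of a fundamental domain, the cusp/reducibility analysis giving the $X^{5/6}$ main term, the sieve for finitely many congruence conditions, and the uniformity estimate on orbits with $p^2\mid\Delta(f)$ (their Theorem 2.13) as the technical crux justifying the truncation --- so there is nothing to correct, only to note that within this paper the result is cited rather than reproved.
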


\subsection{Weighted integral orbits}\label{subsec:WeightedIntOrbits}
We now explain the reduction from $\PGL_2(\QQ)$-equivalence classes in $V_\ZZ$ to $\PGL_2(\ZZ)$-orbits.  Doing so makes use of a certain well-behaved weighting.  Ideally, in order to count $\PGL_2(\QQ)$-equivalence classes in $V_\ZZ$, one could simply count the number of $\PGL_2(\ZZ)$-orbits of $f\in V_\ZZ$ with weight $1/n(f)$ where $n(f)$ is the number of $\PGL_2(\ZZ)$-orbits inside the $\PGL_2(\QQ)$-equivalence class of $f$.  However, this weighting is not defined by congruence conditions and so \Cref{BS:Theorem 2.21} would not be possible.  In order to resolve this, one replaces $n(f)$ by a slightly different weight $m(f)$.

\begin{definition}[\cite{MR3272925}*{\S3.2}]\label{def:weights m(f)}
    For a binary quartic form $f\in V_{\ZZ}$ we define a weighting
        \[m(f):=\sum_{f'\in B(f)}\frac{\#\Aut_\QQ(f)}{\#\Aut_\ZZ(f')},\]
    where $B(f)$ denotes a set of representatives of orbits of the action of $\PGL_2(\ZZ)$ on the $\PGL_2(\QQ)$-equivalence class of $f\in V_\ZZ$, and $\Aut_R(f)$ is the stabiliser of $f$ in $\PGL_2(R)$.  Analogously there are local weights at each prime $p$ for $f\in V_{\ZZ_p}$
        \[m_p(f):=\sum_{f'\in B_p(f)}\frac{\#\Aut_{\QQ_p}(f)}{\#\Aut_{\ZZ_p}(f')},\]
    where $B_p(f)$ denotes a set of representatives of orbits for the action of $\PGL_2(\ZZ_p)$ on the $\PGL_2(\QQ_p)$-equivalence class of $f\in V_\ZZ$.
\end{definition}

This new weighting is defined by congruence conditions, which keeps us on track for using \Cref{BS:Theorem 2.21}.

\begin{proposition}[\cite{MR3272925}*{Prop 3.6}]\label{BS:prop3.6}
    Suppose $f\in V_\ZZ$ has invariants $I,J$ such that $\Delta'(I,J)\neq 0$, then $m(f)=\prod_p m_p(f)$.
\end{proposition}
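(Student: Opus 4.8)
The plan is to recognise this as a standard local--global mass formula and to reduce it to the class number one property of $\GL_2$ over $\QQ$. First I would record some structural consequences of the hypothesis $\Delta'(I,J)\neq 0$: by \Cref{cor:solBQFsAreE/2} the stabiliser of $f$ in $\PGL_2$ over any (completion of) $\QQ$ is isomorphic to $E[2]$, hence finite, so all the groups $\Aut_\QQ(f)$, $\Aut_{\ZZ}(f')$, $\Aut_{\QQ_p}(f)$, $\Aut_{\ZZ_p}(f')$ appearing in \Cref{def:weights m(f)} are finite and the orbit sets are discrete. Since $f$ and any $f'$ in its $\PGL_2(\QQ)$-class have conjugate stabilisers, $\#\Aut_\QQ(f)=\#\Aut_\QQ(f')$, so I can write $m(f)=\#\Aut_\QQ(f)\cdot\mu$ where $\mu:=\sum_{f'\in B(f)}\tfrac{1}{\#\Aut_\ZZ(f')}$ is the mass of the integral orbits inside the rational orbit $O:=\PGL_2(\QQ)f$, and similarly $m_p(f)=\#\Aut_{\QQ_p}(f)\cdot\mu_p$ with $\mu_p$ the corresponding local mass. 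I would also note here that $m_p(f)=1$ for every $p\nmid\Delta(f)$: at such a prime $f$ has nonzero discriminant mod $p$, Hensel's lemma forces $O\cap V_{\ZZ_p}$ to be a single $\PGL_2(\ZZ_p)$-orbit, and the reduction map is injective on the (finite, \'etale) stabiliser so $\Aut_{\ZZ_p}(f)=\Aut_{\QQ_p}(f)$. This both guarantees convergence of $\prod_p m_p(f)$ and reduces the identity to a finite set of primes.

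Next I would set up the double-coset parametrisation. For each prime $p$ put $X_p:=\{\tau\in\PGL_2(\QQ_p):\tau\cdot f\in V_{\ZZ_p}\}$, which is stable under left multiplication by $\PGL_2(\ZZ_p)$ and right multiplication by $\Aut_{\QQ_p}(f)$. Writing each integral form in $O$ as $\sigma\cdot f$ with $\sigma$ well defined modulo $\Aut_\QQ(f)$, one obtains identifications
\[
\PGL_2(\ZZ)\backslash\bigl(O\cap V_\ZZ\bigr)\ \cong\ \PGL_2(\ZZ)\Big\backslash\Bigl(\PGL_2(\QQ)\cap\textstyle\prod_p X_p\Bigr)\Big/\Aut_\QQ(f),
\]
\[
\PGL_2(\ZZ_p)\backslash\bigl(O\cap V_{\ZZ_p}\bigr)\ \cong\ \PGL_2(\ZZ_p)\big\backslash X_p\big/\Aut_{\QQ_p}(f).
\]

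The engine is then the class number one statement $\PGL_2(\AA_f)=\PGL_2(\QQ)\cdot\prod_p\PGL_2(\ZZ_p)$. This follows from $\GL_2(\AA_f)=\GL_2(\QQ)\,\GL_2(\hat\ZZ)$ (every rank-$2$ lattice over the PID $\ZZ$ is free, so the relevant class set is trivial), together with the surjectivity of $\GL_2\to\PGL_2$ on $\QQ_p$-points and on $\ZZ_p$-points. I would use this decomposition to factor any coherent tuple of local data through a single global rational element: given representatives $(g_p)_p$ of local orbits with $g_p$ in the orbit of $f$ for almost all $p$, class number one produces $\gamma\in\PGL_2(\QQ)$ and $u_p\in\PGL_2(\ZZ_p)$ realising the tuple, and conversely a global integral form localises to such a tuple. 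The fibres and the failure of this localisation map to be a bijection of plain sets are governed precisely by the map $\Aut_\QQ(f)\to\prod_p\Aut_{\QQ_p}(f)$, and a routine orbit-counting (groupoid-cardinality) argument shows that the mass identity
\[
\#\Aut_\QQ(f)\cdot\mu\ =\ \prod_p\Bigl(\#\Aut_{\QQ_p}(f)\cdot\mu_p\Bigr)
\]
holds; this is exactly $m(f)=\prod_p m_p(f)$.

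The main obstacle is the bookkeeping of stabilisers in the previous step: the naive orbit counts satisfy \emph{no} such product formula, and it is only after inserting the automorphism factors $\#\Aut_\QQ(f)$ and $\#\Aut_{\QQ_p}(f)$ — which is the entire point of replacing Bhargava--Shankar's invariant $n(f)$ by the weighting $m(f)$ — that multiplicativity is restored. Concretely, the delicate computation is to check that the global mass $\mu$ differs from the formal product $\prod_p\mu_p$ by exactly the compensating ratio $\prod_p\#\Aut_{\QQ_p}(f)\big/\#\Aut_\QQ(f)$, equivalently that the diagonal $\Aut_\QQ(f)$ acts on the product of local double cosets with orbit counts matching the global double coset. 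Once the class number one decomposition is in hand this is a finite, combinatorial verification (only primes dividing $\Delta(f)$ contribute), but it is the step requiring genuine care rather than the soft structural reductions preceding it.
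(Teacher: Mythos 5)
The paper does not prove this statement at all---it is quoted verbatim from \cite{MR3272925}*{Prop 3.6}---and your argument is essentially the proof given in that reference: identify the $\PGL_2(\ZZ)$-orbits (resp.\ $\PGL_2(\ZZ_p)$-orbits) inside the rational class of $f$ with double cosets, invoke the class number one decomposition $\PGL_2(\AA_f)=\PGL_2(\QQ)\prod_p\PGL_2(\ZZ_p)$ to match the global double coset space with the product of the local ones, and observe that the automorphism weights built into $m(f)$ and $m_p(f)$ are precisely what makes the resulting mass identity multiplicative. Your plan is correct and takes the same route; the one step you defer as ``routine'' (the groupoid-cardinality bookkeeping relating $\#\Aut_\QQ(f)\cdot\mu$ to $\prod_p\#\Aut_{\QQ_p}(f)\cdot\mu_p$) is indeed the whole content of the Bhargava--Shankar argument, but it goes through exactly as you describe, with only the primes dividing $\Delta(f)$ contributing nontrivially.
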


Helpfully, weighting by $m(f)$ instead of $n(f)$ does not matter particularly for counting purposes -- they differ only in a density zero set.

\begin{lemma}[\cite{MR3272925}*{Lemma 2.4}, see also \cite{MR3272925}*{\S3.2}]\label{lem:m(f) is same as n(f) often enough} For $f\in V_\ZZ$, denote its $\PGL_2(\QQ)$-equivalence class in $V_\ZZ$ by $[f]_\QQ$ .  Then for sufficiently large $X>0$ and any $\epsilon>0$
    \[\#\set{[f]_\QQ~:~\substack{f\in V_\ZZ,\\H(f)<X\\\Delta'(I(f),J(f))\neq 0\\m(f)\neq n(f)}}\ll_\epsilon X^{3/4+\epsilon}\]
\end{lemma}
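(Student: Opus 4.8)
The plan is to understand precisely when $m(f) \neq n(f)$ occurs and to show that this forces $f$ to land in a thin set whose lattice points of bounded height number at most $O_\epsilon(X^{3/4+\epsilon})$. The weights $m(f)$ and $n(f)$ measure essentially the same thing: $n(f)$ counts $\PGL_2(\ZZ)$-orbits inside the $\PGL_2(\QQ)$-class of $f$, while $m(f)$ weights each such orbit by the ratio $\#\Aut_\QQ(f)/\#\Aut_\ZZ(f')$. By \Cref{cor:solBQFsAreE/2}, the stabiliser $\Aut_\QQ(f) \cong E(\QQ)[2]$ where $E$ is the elliptic curve with invariants $(I(f), J(f))$, and analogously $\Aut_\ZZ(f')$ is a subgroup of this. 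Thus $m(f)$ and $n(f)$ agree whenever $\Aut_\QQ(f) = \Aut_\ZZ(f')$ for every representative $f'$, i.e. whenever the full rational $2$-torsion is realised integrally on each orbit representative. So the first step is to observe that $m(f) \neq n(f)$ can only happen when $\Aut_\QQ(f) \neq 1$, which forces $E[2]$ to have a nontrivial $\QQ$-rational point, equivalently $f$ has a nontrivial rational automorphism.

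The key point, then, is that having a nontrivial rational $2$-torsion automorphism is a strong constraint. First I would recall (this is the content of \cite{MR3272925}*{Lemma 2.4} and the surrounding discussion) that the forms $f$ with $m(f) \neq n(f)$ are precisely those for which the stabiliser behaves non-generically, and that these are governed by congruence and reducibility conditions cutting out a set of codimension-type thinness. Concretely, a nontrivial element of $\Aut_\QQ(f)$ corresponds to the cubic $X^3 - 3IX + J$ acquiring a rational root, i.e. the curve $E$ has a rational $2$-torsion point. The count of integral binary quartic forms of height $< X$ whose invariants $(I,J)$ satisfy $4I^3 - J^2 \neq 0$ and for which $X^3 - 3IX + J$ has a rational (hence integral) root is itself controlled: such $(I,J)$ lie on the discriminant-type locus parametrised by the root, reducing a two-parameter count to essentially a one-parameter family, which yields a bound of the shape $X^{3/4+\epsilon}$ after accounting for the relation $H(f) = \max\{|I|^3, J^2/4\}$.

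Thus the main steps in order are: (1) translate $m(f) \neq n(f)$ into the assertion $\Aut_\QQ(f) \neq 1$ via \Cref{def:weights m(f)} and \Cref{cor:solBQFsAreE/2}; (2) translate $\Aut_\QQ(f) \neq 1$ into the condition that $f$ has a $\PGL_2(\QQ)$-representative with a rational linear factor, or equivalently that the resolvent cubic has a rational root; (3) bound the number of $\PGL_2(\QQ)$-classes $[f]_\QQ$ of such reducible-resolvent forms of height $< X$ by parametrising by the rational root and estimating lattice points on the resulting lower-dimensional locus. I expect step (3) to be the main obstacle, since one must carefully handle the passage from $\PGL_2(\QQ)$-classes back to integral forms and ensure the reducibility condition genuinely drops the dimension of the counting problem by enough to beat the main term $X^{5/6}$; this is exactly the delicate lattice-point estimate that Bhargava--Shankar carry out in \cite{MR3272925}*{Lemma 2.4}, and I would lean on the geometry-of-numbers bounds for reducible or singular orbits established there rather than reproving them from scratch.
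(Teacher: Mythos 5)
Your overall route is the same as the paper's: the paper gives no independent proof of this lemma, it simply observes (as you do) that $m(f)\geq n(f)$ term by term, with equality unless some representative has $\Aut_\ZZ(f')\subsetneq\Aut_\QQ(f')$, so that $m(f)\neq n(f)$ forces $\Aut_\QQ(f)\neq 1$, and then invokes \cite{MR3272925}*{Lemma 2.4} for the bound $O_\epsilon(X^{3/4+\epsilon})$ on classes with nontrivial rational stabiliser. Your steps (1) and (3) match this, and deferring the geometry-of-numbers estimate to Bhargava--Shankar is exactly what the paper does.

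There is, however, one genuine error in your step (2): having $\Aut_\QQ(f)\neq 1$ is \emph{not} equivalent to $f$ having a $\PGL_2(\QQ)$-representative with a rational linear factor, and these two conditions are not equivalent to each other as you assert. A rational linear factor means $f$ has a nontrivial rational zero, which is a $\PGL_2(\QQ)$-invariant condition characterising the orbit mapping to the identity of $E(\QQ)/2E(\QQ)$ (\Cref{cor:solBQFsAreE/2}); a nontrivial stabiliser means $E(\QQ)[2]\neq 0$, i.e.\ the resolvent cubic $X^3-3IX+J$ has a rational root. Neither implies the other: $f=x^4+y^4$ has $(I,J)=(12,0)$, so the resolvent cubic $X(X^2-36)$ splits and the stabiliser is nontrivial (e.g.\ the class of $\left(\begin{smallmatrix}0&1\\1&0\end{smallmatrix}\right)$ fixes $f$), yet $f$ has no rational zero; conversely any form with a rational linear factor whose associated curve has $E(\QQ)[2]=0$ has trivial stabiliser. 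Fortunately the condition you actually feed into the count in step (3) is the correct one (rationality of a root of $X^3-3IX+J$), so the slip does not propagate, but the parenthetical ``or equivalently'' should be deleted and the phrase ``reducible-resolvent forms'' should be understood as referring to reducibility of the resolvent cubic, not of $f$ itself.
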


Thus, in order to count $\PGL_2(\QQ)$-equivalence classes of elements $f\in V_\ZZ$, it is enough to count $\PGL_2(\ZZ)$-equivalence classes of $f$ with weight $1/m(f)$, as the number of forms for which the correct weight differs from this is negligible when taking an average.

Finally, if we write $\psi_p:=\mathbbm{1}_p/m_p(f)$, where $\mathbbm{1}_p$ is the indicator function for the set of $\QQ_p$-soluble binary quartic forms then we see that $\psi=(\psi_p)_{p}$ an acceptable function defined by congruence conditions.
\begin{proposition}[\cite{MR3272925}*{Prop. 3.18}]\label{BS:Proposition 3.18}
    Let $p>2$ be an odd prime number.  If $f\in V_\ZZ$ is either not $\QQ_p$-soluble or $m_p(f)\neq 1$ then $p^2\mid \Delta'(I(f),J(f))$.
\end{proposition}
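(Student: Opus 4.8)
The plan is to prove the contrapositive: assuming $p^2 \nmid \Delta'(I(f), J(f))$, I will show that $f$ is $\QQ_p$-soluble and that $m_p(f) = 1$. Since $\Delta'(I(f),J(f))$ is (up to the normalisation fixed in \Cref{def:naiveheightofEC}) the discriminant of the binary quartic $f$, the hypothesis says that this discriminant vanishes to order at most one at $p$. Thus either $f \bmod p$ is separable (four distinct roots over $\bar{\FF}_p$) or it acquires exactly one double root; equivalently, the curve $E^{I,J}$ has reduction type $I_0$ or $I_1$ at $p$. I treat the two assertions in turn.

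For solubility I would pass to the genus-one $2$-covering $C : z^2 = f(x,y)$ attached to $f$. In the separable case $C$ has good reduction at $p$, so its special fibre is a smooth genus-one curve over $\FF_p$ and the Hasse--Weil bound forces $\#C(\FF_p) \geq (\sqrt{p}-1)^2 > 0$; in the $I_1$ case the special fibre has a single node and its smooth locus, a form of $\GG_m$, is again nonempty over $\FF_p$. In either case one finds a smooth $\FF_p$-point at which $f$ takes a nonzero value, and since $p$ is odd the squaring map is \'etale away from the origin, so Hensel's lemma lifts it to a point of $C(\QQ_p)$. Hence $f$ is $\QQ_p$-soluble.

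For the weight, recall from \Cref{cor:solBQFsAreE/2} that $\Aut_{\QQ_p}(f) \cong E(\QQ_p)[2]$ for $E = E^{I,J}$, and that by \Cref{def:weights m(f)}
\[
m_p(f) = \sum_{f' \in B_p(f)} \frac{\#\Aut_{\QQ_p}(f)}{\#\Aut_{\ZZ_p}(f')}.
\]
The crux is to show that, under our hypothesis, the $\PGL_2(\QQ_p)$-orbit of $f$ meets $V_{\ZZ_p}$ in a single $\PGL_2(\ZZ_p)$-orbit and that $\Aut_{\ZZ_p}(f) = \Aut_{\QQ_p}(f)$, so that the sum collapses to the single term $1$. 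I would argue this via reduction theory: because $v_p(\Delta) \leq 1$, the form $f$ is nondegenerate enough that any $\gamma \in \PGL_2(\QQ_p)$ carrying one integral representative to another must already lie in $\PGL_2(\ZZ_p)$ modulo the stabiliser. Concretely, one reduces modulo $p$ and uses that a quartic whose reduction is at worst nodal cannot be transported to a $\PGL_2(\ZZ_p)$-inequivalent integral form by a transformation with genuine denominators, since clearing those denominators would force the discriminant to drop by an even power of $p$. The same minimality shows that every $\QQ_p$-automorphism of $f$ is already integral, identifying $\Aut_{\ZZ_p}(f)$ with $\Aut_{\QQ_p}(f)$.

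I expect the weight computation to be the main obstacle: the solubility step is a routine point-count-and-lift, whereas $m_p(f)=1$ requires a careful analysis of the $\PGL_2(\ZZ_p)$-reduction theory of integral binary quartics of minimal discriminant, together with the matching of integral and rational stabilisers. It is precisely here that the hypothesis $p^2 \nmid \Delta'$, rather than merely $p \nmid \Delta'$, is needed, in order to include the multiplicative-reduction case $I_1$ alongside good reduction $I_0$.
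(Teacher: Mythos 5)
First, note that the paper does not prove this statement at all: it is imported verbatim from Bhargava--Shankar \cite{MR3272925}*{Prop.\ 3.18}, so the only comparison available is with the proof in that reference. Your outline does follow the strategy of that proof (contrapositive; solubility from a smooth point of the reduction plus Hensel; $m_p(f)=1$ from showing the $\PGL_2(\QQ_p)$-class meets $V_{\ZZ_p}$ in a single $\PGL_2(\ZZ_p)$-orbit with $\Aut_{\ZZ_p}(f)=\Aut_{\QQ_p}(f)$), and the solubility half is essentially complete: under $p^2\nmid\Delta'$ the reduction of $f$ has at most one double root, which must moreover be $\FF_p$-rational (a conjugate pair of double roots would already force $p^2\mid\Delta'$), so the special fibre of $z^2=f(x,y)$ is either a smooth genus-one curve or has smooth locus covered by a conic minus two points; either way it has a smooth $\FF_p$-point, which lifts.

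The gap is in the half you yourself flag as the main obstacle. The one concrete mechanism you offer for $m_p(f)=1$ --- that transporting $f$ to a $\PGL_2(\ZZ_p)$-inequivalent integral form ``would force the discriminant to drop by an even power of $p$'' --- cannot be right as stated, because $\Delta'(I,J)$ is built from the invariants $I,J$ of the \emph{twisted} action $\gamma\cdot g=\det(\gamma)^{-2}g((x,y)\gamma^t)$ and is therefore literally unchanged by every element of $\PGL_2(\QQ_p)$; nothing ever drops. The actual argument runs the other way: using the Cartan decomposition one reduces to $\gamma=\operatorname{diag}(1,p^n)$ with $n\geq 1$, and integrality of $\gamma\cdot f=p^{-2n}f(x,p^ny)$ forces the divisibilities $p^{2}\mid a$ and $p\mid b$ on the coefficients of $f=ax^4+bx^3y+\cdots$; these congruences in turn imply $p^2\mid\Delta'(I(f),J(f))$ (two roots of $f$ collide at $[1:0]$ to order at least one each), contradicting the hypothesis. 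A separate but parallel argument is needed to show that a $\QQ_p$-automorphism of $f$ --- which a priori only lies in \emph{some} maximal compact subgroup, not necessarily $\PGL_2(\ZZ_p)$ --- is integral, which you assert but do not address. Without these two computations the statement $m_p(f)=1$ is not established, so the proposal as written does not yet constitute a proof.
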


In particular we can use \Cref{BS:Theorem 2.21} to count our orbits of interest.  In doing so, it will be helpful to compute the $p$-adic masses on the right hand side of the equation there.  For this we have the following.
\begin{theorem}[\cite{MR3272925}*{Cor. 3.8}]\label{BS:Cor 3.8}
    Let $p$ be a prime and $\phi_p$ a continuous $\PGL_2(\QQ_p)$-invariant function on $V_{\ZZ_p}$, such that every element $f\in V_{\ZZ_p}$ in the support of $\phi_p$ has nonzero discriminant, is soluble and satisfies $2^4\cdot 3\mid I(f)$ and $2^6\cdot 3^3\mid J(f)$.  Then
    \begin{align*}
    &\int_{\ZZ_p}\frac{\phi_p(f)}{m_p(f)}df
    \\&=\abs{\frac{1}{27}}_p\Vol{\PGL_2(\ZZ_p)}\int_{\substack{(I,J)\in \ZZ_p^2\\\Delta(I,J)\neq 0}}\frac{1}{\#E^{I,J}(\QQ_p)[2]}\left(\sum_{\sigma\in E^{I,J}(\QQ_p)/2E^{I,J}(\QQ_p)}\phi_p(f_\sigma)\right)dIdJ
    \end{align*}
    where $f_\sigma$ is any element in $V_{\ZZ_p}$ corresponding to $\sigma$ under the correspondence in \Cref{thm:Integral BQF with proper invariants correspond to selmer elements}.
\end{theorem}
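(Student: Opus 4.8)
The plan is to deduce this from a $p$-adic Jacobian change of variables for the $\PGL_2$-action on binary quartics, fed by the orbit dictionary of \Cref{cor:solBQFsAreE/2}. Dimensionally, $V_{\ZZ_p}$ is five-dimensional, $\PGL_2$ is three-dimensional, and the invariant pair $(I,J)$ accounts for the remaining two dimensions; over the locus $\Delta'(I,J)\neq0$ the fibres of $f\mapsto(I(f),J(f))$ are three-dimensional and $\PGL_2(\overline{\QQ_p})$-homogeneous. First I would establish a co-area/Fubini identity: that there is a constant $\mathcal{J}$ and, for each $(I,J)$ with $\Delta'(I,J)\neq0$, a $\PGL_2(\QQ_p)$-invariant measure $d\mu_{(I,J)}$ on the fibre $\{f:(I(f),J(f))=(I,J)\}$, normalised compatibly with Haar measure on $\PGL_2(\QQ_p)$, such that $df=\abs{\mathcal{J}}_p\,dI\,dJ\,d\mu_{(I,J)}$. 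Granting this, three tasks remain: reconciling the $\ZZ_p$- and $\QQ_p$-orbit structures via the weight $m_p$, reading off the fibral data from \Cref{cor:solBQFsAreE/2}, and pinning down $\mathcal{J}$.

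For the reconciliation, I would fix $(I,J)$ and a single $\PGL_2(\QQ_p)$-orbit $O$ in the fibre, with representative $f_0$ of finite stabiliser. Its intersection with $V_{\ZZ_p}$ is a finite union of $\PGL_2(\ZZ_p)$-orbits indexed by the representatives $B_p(f_0)$ of \Cref{def:weights m(f)}, and under the Haar-compatible measure $d\mu_{(I,J)}$ the orbit through $f'$ has mass $\Vol{\PGL_2(\ZZ_p)}/\#\Aut_{\ZZ_p}(f')$. Since $\phi_p$ is $\PGL_2(\QQ_p)$-invariant it is constant on $O$, so I would compute
\[\int_{O\cap V_{\ZZ_p}}\frac{\phi_p(f)}{m_p(f)}\,d\mu_{(I,J)}=\frac{\phi_p(f_0)}{m_p(f_0)}\sum_{f'\in B_p(f_0)}\frac{\Vol{\PGL_2(\ZZ_p)}}{\#\Aut_{\ZZ_p}(f')}=\Vol{\PGL_2(\ZZ_p)}\,\frac{\phi_p(f_0)}{\#\Aut_{\QQ_p}(f_0)},\]
the final equality being precisely the definition of $m_p(f_0)$ in \Cref{def:weights m(f)}. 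Thus the weight $m_p$ cancels cleanly, leaving a single stabiliser order in the denominator.

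It then remains to sum over the orbits $O$ lying over $(I,J)$ and integrate. Because $\phi_p$ is supported on soluble forms, only the soluble orbits contribute, and \Cref{cor:solBQFsAreE/2} identifies these with $E^{I,J}(\QQ_p)/2E^{I,J}(\QQ_p)$, giving the sum over $\sigma$, while the same corollary identifies the stabiliser $\Aut_{\QQ_p}(f_0)\cong E^{I,J}(\QQ_p)[2]$, giving the factor $1/\#E^{I,J}(\QQ_p)[2]$; $\PGL_2(\QQ_p)$-invariance makes $\phi_p(f_\sigma)$ independent of the chosen representative. Feeding the per-orbit identity into the co-area formula and integrating $\abs{\mathcal{J}}_p\,dI\,dJ$ over $(I,J)\in\ZZ_p^2$ then assembles the asserted right-hand side. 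The divisibility hypotheses $2^4\cdot3\mid I$ and $2^6\cdot3^3\mid J$, together with solubility and $\Delta'(I,J)\neq0$ on the support, are exactly what is needed to apply \Cref{cor:solBQFsAreE/2} and the integral-representative result \Cref{thm:Integral BQF with proper invariants correspond to selmer elements}.

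The hard part will be the explicit value $\abs{\mathcal{J}}_p=\abs{1/27}_p$. A homogeneity count first shows $\mathcal{J}$ is genuinely a constant: under $f\mapsto\lambda f$ one has $I\mapsto\lambda^2 I$ and $J\mapsto\lambda^3 J$, so both $df$ and $dI\,dJ$ scale by $\lambda^5$ while the fibral Haar data is unchanged, forcing the relative invariant $\mathcal{J}$ to have weight zero. Computing the scalar is then a $5\times5$ determinant for the differential of the orbit map $(g,(I,J))\mapsto g\cdot f_{I,J}$ along a section $f_{I,J}$ of the invariant map: one differentiates the twisted action (with its $\det^{-2}$ factor) in the three group directions and $f_{I,J}$ in the two invariant directions. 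Matching the outcome against the Weierstrass model $y^2=x^3-\tfrac{I}{3}x-\tfrac{J}{27}$ is what produces the factor $27=3^3$ from the denominators in that model; the delicate point is tracking the normalisations of $df$, of Haar measure on $\PGL_2(\QQ_p)$, and of $dI\,dJ$ so that no stray power of $p$ survives. Everything else is formal once this Jacobian is in hand.
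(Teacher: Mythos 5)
The paper does not prove this statement: it is imported verbatim from Bhargava--Shankar \cite{MR3272925}*{Cor.~3.8} and used as a black box, so there is no internal proof to measure you against; the relevant comparison is with the argument in loc.\ cit. Your sketch is, in essence, that argument. The decomposition $df=\abs{\mathcal{J}}_p\,dI\,dJ\,d\mu_{(I,J)}$ is exactly the $p$-adic Jacobian change-of-variables proposition from which Bhargava--Shankar deduce their Corollary 3.8, and your orbit-by-orbit bookkeeping is correct: with the fibral measure normalised so that the $\PGL_2(\ZZ_p)$-orbit of $f'$ has mass $\Vol{\PGL_2(\ZZ_p)}/\#\Aut_{\ZZ_p}(f')$, the telescoping against \Cref{def:weights m(f)} does cancel $m_p$ and leave $\Vol{\PGL_2(\ZZ_p)}\,\phi_p(f_0)/\#\Aut_{\QQ_p}(f_0)$ per $\PGL_2(\QQ_p)$-orbit, and \Cref{cor:solBQFsAreE/2} then converts the sum over soluble orbits into the sum over $E^{I,J}(\QQ_p)/2E^{I,J}(\QQ_p)$ with the stabiliser order $\#E^{I,J}(\QQ_p)[2]$ in the denominator. (One small correction of emphasis: \Cref{cor:solBQFsAreE/2} is a statement over a field and needs no divisibility hypotheses; the conditions $2^4\cdot3\mid I$, $2^6\cdot3^3\mid J$ are there so that the $\PGL_2(\QQ_p)$-classes in question actually meet $V_{\ZZ_p}$ and so that the correspondence of \Cref{thm:Integral BQF with proper invariants correspond to selmer elements} applies.)

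The one genuinely unproven ingredient in your plan is the change-of-variables formula itself together with the constant $\abs{1/27}_p$ --- which you candidly flag as ``the hard part'', but which is precisely the content of the proposition that Corollary 3.8 is a corollary of, so the sketch is not yet a proof. Two points there need more than you give. First, your homogeneity count only shows that the putative Jacobian is invariant under scaling; to conclude it is a \emph{constant} you need the prior algebraic fact that the Jacobian of the orbit map is a relative invariant of the $\PGL_2\times\mathbb{G}_m$-action on $V$, hence a constant times a power of the discriminant, after which the weight count forces that power to be zero. Second, the determinant computation along a section $f_{I,J}$, with the $\det^{-2}$ twist in the action and the normalisations of $df$, of Haar measure on $\PGL_2(\QQ_p)$, and of $dI\,dJ$ all tracked, is where the value $1/27$ (and the absence of stray powers of $2$ and $p$) actually comes from; nothing in the sketch yet certifies that value. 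If you carry out that computation, the rest of your argument assembles the identity exactly as Bhargava--Shankar do.
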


\section{Selmer Bundles}\label{sec:SelmerBundles}
In this section we generalise the work of Bhargava--Shankar \cite{MR3272925} to count average sizes of certain Selmer groups $\sel{\cL}(\QQ, E[2])$ as the pair $(E,\cL)$ varies.  We fix a large family $\cF$ of elliptic curves.
\subsection{\texorpdfstring{$2$}{2}-Selmer Bundles}
We shall firstly need a way to gather together the elements of general Selmer structures on elliptic curves in a large family $\cF$.
\begin{definition}\label{def:2selbundle}
    A $2$-Selmer bundle $\LLL$ (on $\cF$) is the data of, for each place $v\in\places_\QQ$, a subset
    \[\LLL_v(\cF)\subseteq \set{f\in V_{\QQ_v}~:~\substack{(2^{-4}I(f), 2^{-6}J(f))\in \Inv_v(\cF)\\\textnormal{and }f\textnormal{ is }\QQ_v\textnormal{-soluble}}},\]
    such that
    \begin{enumerate}[label=(\Roman*)]
        \item\label{enum:2selbundle infty set} $\LLL_\infty(\cF)$ is either the whole set or the subset of binary quartic forms which have a linear factor over $\RR$ -- we say that $\LLL_\infty(\cF)$ is type $2$ or type $1$ respectively for the two options;
        \item\label{enum:2selbundle p set} for each prime number $p$, $\LLL_p(\cF)\cap V_{\ZZ_p}$ is closed and open in $V_{\ZZ_p}$ and is a union of $\PGL_2(\QQ_p)$-equivalence classes;
        \item\label{enum:2selbundle unram cdn} for every $(I,J)\in \Inv(\cF)$, all but finitely many prime numbers $p$ satisfy 
        \[\set{f\in V_{\ZZ_p}~:~\substack{(I(f),J(f))=(2^4I, 2^6J)\\\textnormal{and }f\in \LLL_p(\cF)}}=\set{f\in V_{\ZZ_p}~:~\substack{(I(f), J(f))\in (2^4I, 2^6J)\\f\textnormal{ is }\QQ_p\textnormal{-soluble}}};\]
        \item \label{enum:2selbundle unif p axiom} there is a constant $C(\LLL)\in\RR_{>0}$ such that for every prime number $p\geq C(\LLL)$ and every $f\in V_\ZZ$: if $p^2\nmid \Delta(f)$ then $f\in \LLL_p(\cF)$.
    \end{enumerate}
    For each $2$-Selmer bundle $\LLL$ we denote the subset 
        \[\LLL(\cF):=\set{f\in V_\ZZ~:~\substack{
        (2^{-4}I(f),\ 2^{-6}J(f))\in \Inv(\cF)\\
        \forall v\in \places_\QQ,\ f\in \LLL_v(\cF)}}\subseteq V_\ZZ.
        \]
\end{definition}
\begin{rem}\label{rem:infinity cdn on 2selbundles}
    By definition, $\LLL(\cF)$ is automatically a union of $\PGL_2(\QQ)$-equivalence classes.  The restrictions on $\LLL_\infty(\cF)$ should be understood as follows.  If $\Delta'(I,J)<0$ then $\#E^{I,J}(\RR)/2E^{I,J}(\RR)=1$ so that being $\RR$-soluble is equivalent to having a linear factor by \Cref{cor:solBQFsAreE/2}.  If instead $\Delta'(I,J)>0$ then $\#E^{I,J}(\RR)/2E^{I,J}(\RR)=2$, and so a choice of subgroup can be either the whole group or trivial.  Under the correspondence \Cref{cor:solBQFsAreE/2} such a choice is equivalent to deciding whether to include all $\RR$-soluble forms with appropriate invariants or just the ones with a linear factor.  Our constraint essentially forces that this decision is uniform across all of the elliptic curves $E\in\cF$.
\end{rem}
    We can understand the set $V_\ZZ\cap\LLL_\infty(\cF)$ of binary quartic forms in terms of the $V_\ZZ^{(i)}$ of \Cref{def:V_Z^i}.  We list all of the possibilities in the table below for the readers convenience, for an explanation of this see the discussion at the start of \cite{MR3272925}*{\S2.1}.

\begin{table}[ht]\centering
    \begin{tabular}{|c|c|c|}
        \hline
        Type of $\LLL_\infty(\cF)$&$\Inv_\infty(\cF)$&$V_\ZZ\cap\LLL_\infty(\cF)$\\
        \hline
        $1$&$\set{(I,J)~:~\Delta(I,J)<0}$&$V_\ZZ^{(1)}$\\
        $1$&$\set{(I,J)~:~\Delta(I,J)>0}$&$V_\ZZ^{(0)}$\\
        $1$&$\set{(I,J)~:~\Delta(I,J)\neq0}$&$V_\ZZ^{(0)}\cup V_\ZZ^{(1)}$\\
        $2$&$\set{(I,J)~:~\Delta(I,J)<0}$&$V_\ZZ^{(1)}$\\
        $2$&$\set{(I,J)~:~\Delta(I,J)>0}$&$V_\ZZ^{(0)}\cup V_\ZZ^{(2+)}$\\
        $2$&$\set{(I,J)~:~\Delta(I,J)\neq0}$&$V_\ZZ^{(0)}\cup V_\ZZ^{(1)}\cup V_\ZZ^{(2+)}$\\
        \hline
    \end{tabular}
    \caption{The possibilities for $V_\ZZ\cap\LLL_\infty(\cF)$, dependent on the type of $\LLL_\infty(\cF)$ and $\Inv_\infty(\cF)$}\label{tab:Possibilities for Linfty(F)}
\end{table}

A $2$-Selmer bundle assigns to each elliptic curve a Selmer structure in a continuous manner.

\begin{definition}\label{def:selmer structures from selmer bundle}
    Let $\LLL$ be a $2$-Selmer bundle.  Then for each elliptic curve $E\in \cF$ and each place $v\in\places_\QQ$, writing $(I,J):=(I(E), J(E))$, we define the subset
    \[\LLL(E)_v:=\delta\left(\set{x\in E(\QQ_v)/2E(\QQ_v)~:~\substack{x\textnormal{ corresponds via \Cref{cor:solBQFsAreE/2}}\\\textnormal{(with Weierstrass model }E^{2^4I,2^6J}\textnormal{)}\\\textnormal{to a subset of }\LLL_v(\cF)}}\right)\subseteq H^1(\QQ_v, E[2]),\]
    where $\delta:E(\QQ_v)/2E(\QQ_v)\to H^1(\QQ_v, E[2])$ is the usual connecting map from the Galois cohomology of the multiplication-by-$2$ exact sequence on $E$.  

    We then define the associated Selmer set for each $E\in\cF$ to be
    \[\sel{\LLL}(\QQ, E[2]):=\set{x\in H^1(\QQ, E[2])~:~\res_v(x)\in \LLL(E)_v\ \forall v\in\places_\QQ},\]
    where $\res_v:H^1(\QQ, E[2])\to H^1(\QQ_v, E[2])$ is the usual restriction map.
\end{definition}

\begin{rem}\label{rem:infinity cdn on 2selbundles SECOND MENTION}
    Note that, as in \Cref{rem:infinity cdn on 2selbundles}, for each $2$-Selmer bundle $\LLL$ we have that $\LLL_\infty(\cF)$ has type $2$ if and only if for every $E\in\cF$ with $\Delta'(I(E),J(E))>0$, we have $\#\LLL(E^{I,J})_\infty=2$.
\end{rem}

\begin{lemma}
    Let $\LLL$ be a $2$-Selmer bundle, and $E\in \cF$ be an elliptic curve.  Then for all but finitely many places $v\in\places_\QQ$ we have
    \[\LLL(E)_v=H^1_{\nr}(\QQ_v,E[2])\]
\end{lemma}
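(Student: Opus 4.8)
The plan is to show that, for all but finitely many places $v$, the local condition $\LLL(E)_v$ coincides with the Kummer image $\SSS_v(\QQ;E)=\delta\left(E(\QQ_v)/2E(\QQ_v)\right)$, and then invoke the fact (established for the trivial twist in the unnamed proposition of \Cref{subsec:TwistedKummerImages}) that $\SSS_v(\QQ;E)=H^1_{\nr}(\QQ_v,E[2])$ for all but finitely many $v$. Since the infinite place is excluded by the "all but finitely many" quantifier, it suffices to treat finite primes $p$. Thus the whole argument reduces to the set-theoretic claim that $\LLL(E)_p=\SSS_p(\QQ;E)$ for cofinitely many $p$.

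Set $(I,J):=(I(E),J(E))$, which lies in $\Inv(\cF)$ by definition. First I would apply condition \ref{enum:2selbundle unram cdn} of \Cref{def:2selbundle} to this pair: it guarantees that for all but finitely many primes $p$ the integral forms with invariants $(2^4I,2^6J)$ lying in $\LLL_p(\cF)$ are \emph{exactly} the $\QQ_p$-soluble ones. Combining this with the fact that $\LLL_p(\cF)\cap V_{\ZZ_p}$ is a union of $\PGL_2(\QQ_p)$-equivalence classes (condition \ref{enum:2selbundle p set}) and that every $\QQ_p$-soluble form with these invariants is $\PGL_2(\QQ_p)$-equivalent to an integral representative (\Cref{lem:loc sol BQF with decent invts are repd by integral BQF}, whose divisibility hypotheses $2^4\cdot3\mid I(f)$, $2^6\cdot3^3\mid J(f)$ hold automatically for these invariants once $p\neq 2,3$), I would conclude that $\LLL_p(\cF)$ contains \emph{all} $\QQ_p$-soluble binary quartic forms with invariants $(2^4I,2^6J)$, not merely the integral ones. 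Feeding this into the correspondence of \Cref{cor:solBQFsAreE/2} (with Weierstrass model $E^{2^4I,2^6J}$), every class $x\in E(\QQ_p)/2E(\QQ_p)$ then corresponds to a $\PGL_2(\QQ_p)$-orbit of soluble forms contained in $\LLL_p(\cF)$, so every such $x$ qualifies in the defining set of \Cref{def:selmer structures from selmer bundle}. Hence $\LLL(E)_p=\delta\left(E(\QQ_p)/2E(\QQ_p)\right)=\SSS_p(\QQ;E)$, as desired.

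The main obstacle I anticipate is bookkeeping rather than conceptual: condition \ref{enum:2selbundle unram cdn} is phrased only in terms of integral forms $V_{\ZZ_p}$, whereas the correspondence of \Cref{cor:solBQFsAreE/2} and the definition of $\LLL(E)_p$ are phrased over $\QQ_p$, so the delicate step is the passage between $\QQ_p$-soluble forms and integral ones via \Cref{lem:loc sol BQF with decent invts are repd by integral BQF} together with the $\PGL_2(\QQ_p)$-invariance of $\LLL_p(\cF)$. One must also be careful that the several "all but finitely many" clauses — the finitely many primes failing condition \ref{enum:2selbundle unram cdn}, the primes $p\in\{2,3\}$ and primes of bad reduction excluded above, and the finitely many $v$ for which $\SSS_v(\QQ;E)\neq H^1_{\nr}(\QQ_v,E[2])$ — are finite in aggregate, which they are. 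Assembling these, the identification $\LLL(E)_v=\SSS_v(\QQ;E)=H^1_{\nr}(\QQ_v,E[2])$ holds off a finite set of places, completing the proof.
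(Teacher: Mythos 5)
Your proof is correct and follows essentially the same route as the paper, whose own proof simply cites condition \ref{enum:2selbundle unram cdn} of \Cref{def:2selbundle}; you have merely made explicit the bookkeeping (integral representatives, $\PGL_2(\QQ_p)$-invariance, and the identification $\SSS_v(\QQ;E)=H^1_{\nr}(\QQ_v,E[2])$ at all but finitely many $v$) that the paper leaves implicit.
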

\begin{proof}
    This follows from condition \Cref{enum:2selbundle unram cdn} of \Cref{def:2selbundle} .
\end{proof}

Of course there is a classical example of the construction above:  $2$-Selmer groups themselves!

\begin{example}\label{ex:2-Selmer group as 2-selstruct}
    Consider the $2$-Selmer bundle $\SSS$ given by, for each place $v\in\places_\QQ$, setting
    \[\SSS_v(\cF)=\set{f\in V_{\QQ_v}~:~\substack{(2^{-4}I(f), 2^{-6}J(f))\in \Inv_v(\cF)\\\textnormal{and }f\textnormal{ is }\QQ_v\textnormal{-soluble}}}.\]
    Indeed: Axioms \Cref{enum:2selbundle infty set}, \Cref{enum:2selbundle p set} and \Cref{enum:2selbundle unram cdn} are clear from the definitions (and also implicit in \cite{MR3272925}).  Finally Axiom \Cref{enum:2selbundle unif p axiom} follows from \Cref{BS:Proposition 3.18}.

    The associated Selmer group for $E\in\cF$ is the usual $2$-Selmer group, i.e.
    \[\sel{\SSS}(\QQ, E[2])=\sel{2}(E/\QQ).\]

    Similarly, for $\theta\in \QQ^\times$ we have the $2$-Selmer bundle $\SSS^{(\theta)}$ given by setting
    \[\SSS_v^{(\theta)}(\cF)=\set{f\in V_{\QQ_v}~:~\substack{(2^{-4}I(f), 2^{-6}J(f))\in \Inv_v(\cF)\\\textnormal{and }\theta f\textnormal{ is }\QQ_v\textnormal{-soluble}}}.\]
    By \Cref{thm:BQF rep of twisted Kummer}, the associated Selmer group for $E\in\cF$ is that of its twist by $\theta$, i.e.
    \[\sel{\SSS^{(\theta)}}(\QQ,E[2])=\varphi_\theta^*\sel{2}(E_{\theta}/\QQ),\]
    where $\varphi_\theta^*:H^1(\QQ, E_{\theta}[2])\cong H^1(\QQ, E[2])$ is the isomorphism induced by the $\QQ(\sqrt{\theta})$-isomorphism $E_{\theta}\to E$.
\end{example}

\subsection{Averages Sizes in Selmer Bundles}
\label{subsec:AvgSelmerBundles}
Here we use the statistical results of \cite{MR3272925}, as recalled in \Cref{sec:RecallingBS} to conclude a local mass formula for the average size of Selmer groups obtained by $2$-Selmer bundles.  This will follow from \Cref{BS:Theorem 2.21}, but we shall unpack the right hand side of this a little.  Firstly we establish the archimedean contribution.

\begin{proposition}\label{prop:Archimedean massL}
    If $\LLL$ is a $2$-Selmer bundle then
    \begin{align*}
    N(V_\ZZ\cap&\LLL_\infty(\cF);X)
    \\&=\frac{1}{27}\Vol{\PGL_2(\ZZ)\backslash\PGL_2(\RR)}\int_{\substack{(I,J)\in \Inv_\infty(\cF)\\H(I,J)<X}}\frac{\#\LLL(E^{I,J})_\infty}{\#E^{I,J}(\RR)[2]}dIdJ+O(X^{3/4+\epsilon})
    \end{align*}
\end{proposition}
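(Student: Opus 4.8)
The plan is to deduce this archimedean mass formula from the geometry-of-numbers count of Bhargava--Shankar together with the real incarnation of the Jacobian computation underlying \Cref{BS:Cor 3.8}. First I would use \Cref{tab:Possibilities for Linfty(F)} to write $V_\ZZ\cap\LLL_\infty(\cF)$ as a disjoint union of sets $V_\ZZ^{(i)}$ with $i$ ranging over a subset of $\{0,1,2+\}$ determined by the type of $\LLL_\infty(\cF)$ and by $\Inv_\infty(\cF)$; the index $2-$ never occurs, since a negative definite form takes only negative values and hence is not $\RR$-soluble. Consequently
\[N(V_\ZZ\cap\LLL_\infty(\cF);X)=\sum_i N(V_\ZZ^{(i)};X),\]
and it suffices to compute each summand and reassemble.

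Next I would invoke the counting machinery of \cite{MR3272925}: for each $i$ the quantity $N(V_\ZZ^{(i)};X)$ equals the volume of a fundamental domain $\mathcal{R}_X^{(i)}$ for the action of $\PGL_2(\ZZ)$ on the real forms in $V_\RR^{(i)}$ of height less than $X$, with an error $O(X^{3/4+\epsilon})$ that absorbs both the truncation of the non-compact cusp and the negligible contribution of reducible forms.

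The heart of the argument is then the evaluation of $\mathrm{Vol}(\mathcal{R}_X^{(i)})$ by passing to the coordinates $(\PGL_2(\RR),I,J)$. The algebraic Jacobian identity relating $da\cdots de$ to Haar measure on $\PGL_2(\RR)$ times $dI\,dJ$ is place-independent, and read at the archimedean place (where $|1/27|_\infty=\tfrac{1}{27}$) it produces the constant $\tfrac{1}{27}\Vol{\PGL_2(\ZZ)\backslash\PGL_2(\RR)}$ exactly as in \Cref{BS:Cor 3.8}. By reduction theory the domain $\mathcal{R}_X^{(i)}$ fibres over $\{(I,J)\in\Inv_\infty(\cF):H(I,J)<X\}$, and by \Cref{cor:solBQFsAreE/2} the $\RR$-soluble $\PGL_2(\RR)$-orbits with invariants $(I,J)$ are in bijection with $E^{I,J}(\RR)/2E^{I,J}(\RR)$, each carrying a stabiliser of order $\#E^{I,J}(\RR)[2]$. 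Writing $n_i(I,J)\in\{0,1\}$ for the number of type-$i$ orbits selected by $\LLL_\infty(\cF)$ at $(I,J)$, this yields
\[\mathrm{Vol}(\mathcal{R}_X^{(i)})=\frac{1}{27}\Vol{\PGL_2(\ZZ)\backslash\PGL_2(\RR)}\int_{\substack{(I,J)\in\Inv_\infty(\cF)\\H(I,J)<X}}\frac{n_i(I,J)}{\#E^{I,J}(\RR)[2]}\,dI\,dJ.\]
Summing over $i$ and using that $\sum_i n_i(I,J)=\#\LLL(E^{I,J})_\infty$ by \Cref{def:selmer structures from selmer bundle} gives the asserted formula.

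I expect the main obstacle to lie in the reduction-theory bookkeeping of the penultimate step: making the fibration over $(I,J)$ precise, correctly tracking the stabiliser orders $\#E^{I,J}(\RR)[2]$ across the different real orbits, and controlling the non-compact cusp of $\mathcal{R}_X^{(i)}$ so that its contribution is genuinely absorbed into the $O(X^{3/4+\epsilon})$ error. By contrast the Jacobian identity yielding $\tfrac{1}{27}\Vol{\PGL_2(\ZZ)\backslash\PGL_2(\RR)}$ is an algebraic identity that may be quoted verbatim from \cite{MR3272925}, so the only genuinely new point is the identification of the selected real orbits with the set counted by $\#\LLL(E^{I,J})_\infty$.
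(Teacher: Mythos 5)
Your proposal is correct and follows essentially the same route as the paper: decompose $V_\ZZ\cap\LLL_\infty(\cF)$ into the relevant $V_\ZZ^{(i)}$ via \Cref{tab:Possibilities for Linfty(F)}, quote the Bhargava--Shankar volume formula $N(V_\ZZ^{(i)};X)=\Vol{R_X(L^{(i)})}/n_i+O(X^{3/4+\epsilon})$ with $n_1=2$ and $n_i=4$ otherwise (your identification of $n_i$ with the stabiliser order $\#E^{I,J}(\RR)[2]$ and of the selected orbits with $\#\LLL(E^{I,J})_\infty$ is exactly the reassembly step the paper performs via \Cref{rem:infinity cdn on 2selbundles SECOND MENTION}). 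The only cosmetic difference is that the paper cites the BS volume computation directly rather than re-deriving the fibration over $(I,J)$, but the content is identical.
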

\begin{proof}
    If $\#\LLL(E^{I,J})=2$ for $\Delta(I,J)>0$ then this is \cite{MR3272925}*{Final paragraph before Thm. 3.19}.  In general: note that by \cite{MR3272925}*{eq. (20)} and \cite{MR3272925}*{Prop. 2.8 and preceding discussion}, and in their notation, we have for every $i=0,1,2+,2-$, writing $n_1=2$ and $n_i=4$ for $i\neq 1$, then
    \begin{align*}
    &N(V_\ZZ^{(i)};X)
    \\&=\Vol{R_X(L^{(i)})}/n_i+O(X^{3/4+\epsilon})
    \\&=\begin{cases}
        \frac{1}{27}\Vol{\PGL_2(\ZZ)\backslash\PGL_2(\RR)}\int_{\substack{(I,J)\in\Inv_\infty(\cF)
        \\\Delta(I,J)>0\\H(I,J)<X}}\frac{1}{4}dIdJ+O(X^{3/4+\epsilon})& i=0,2+,2-\\
        \frac{1}{27}\Vol{\PGL_2(\ZZ)\backslash\PGL_2(\RR)}\int_{\substack{(I,J)\in\Inv_\infty(\cF)\\\Delta(I,J)<0\\H(I,J)<X}}\frac{1}{2}dIdJ+O(X^{3/4+\epsilon})& i=1
        \end{cases}
    \end{align*}
    The result then follows from \Cref{rem:infinity cdn on 2selbundles SECOND MENTION} and \Cref{tab:Possibilities for Linfty(F)}, checking each possibility for $\Inv_\infty(\cF)$ and type for $\LLL_\infty(\cF)$.
\end{proof}
We then we compute the local masses coming from the non archimedean places.
\begin{lemma}\label{lem:Weighted mass over L_p}
    Let $\LLL$ be a $2$-Selmer bundle.  Then for every prime $p$ we have
    \[\int_{\LLL_p(\cF)\cap V_{\ZZ_p}}\frac{1}{m_p(f)}df=\abs{\frac{2^{10}}{27}}_p\Vol{\PGL_2(\ZZ_p)}\int_{(I,J)\in \Inv_p(\cF)}\frac{\#\LLL(E^{I,J})_p}{\#E^{I,J}(\QQ_p)[2]}dIdJ\]
\end{lemma}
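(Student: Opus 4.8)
The plan is to apply the Bhargava--Shankar mass formula \Cref{BS:Cor 3.8} to the indicator function of the bundle. Concretely, I would set $\phi_p:=\mathbbm{1}_{\LLL_p(\cF)\cap V_{\ZZ_p}}$, so that the left-hand side of the lemma is exactly $\int_{V_{\ZZ_p}}\phi_p(f)/m_p(f)\,df$. The whole argument then reduces to checking that $\phi_p$ satisfies the hypotheses of \Cref{BS:Cor 3.8} and to unwinding the right-hand side of that corollary into the shape claimed here.

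For the hypotheses: $\phi_p$ is $\PGL_2(\QQ_p)$-invariant because $\LLL_p(\cF)$ is a union of $\PGL_2(\QQ_p)$-equivalence classes (axiom \Cref{enum:2selbundle p set}), and it is continuous, indeed locally constant, because $\LLL_p(\cF)\cap V_{\ZZ_p}$ is closed and open in $V_{\ZZ_p}$ by the same axiom. Every $f$ in the support lies in $\LLL_p(\cF)$, hence is $\QQ_p$-soluble and satisfies $(2^{-4}I(f),2^{-6}J(f))\in\Inv_p(\cF)$, so in particular has nonzero discriminant. The remaining divisibility hypotheses $2^4\cdot 3\mid I(f)$ and $2^6\cdot 3^3\mid J(f)$ are the only slightly delicate point: writing $(I_E,J_E):=(2^{-4}I(f),2^{-6}J(f))\in\Inv_p(\cF)$, one uses that by \Cref{def:I(E) and J(E)} every curve contributes invariants of the form $(I(E),J(E))=(-3A,-27B)$, so $\Inv(\cF)$, and hence its $p$-adic closure $\Inv_p(\cF)$ (this being a closed condition), lies in $3\ZZ_p\times 27\ZZ_p$. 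Thus $3\mid I_E$ and $3^3\mid J_E$, which yields the claimed divisibilities for $I(f)=2^4I_E$ and $J(f)=2^6J_E$.

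It then remains to massage the output of \Cref{BS:Cor 3.8}. The integral there runs over the invariants $(I',J'):=(I(f),J(f))$ of the forms themselves, whereas the lemma is phrased in terms of the curve invariants $(I,J)\in\Inv_p(\cF)$. I would perform the change of variables $(I',J')=(2^4I,2^6J)$; since $d(2^4I)\,d(2^6J)=\abs{2^{10}}_p\,dI\,dJ$ for the $p$-adic measure, this produces precisely the factor $\abs{2^{10}}_p$, combining with $\abs{1/27}_p$ to give $\abs{2^{10}/27}_p$, and transports the region of integration from the support $\set{(I',J'):(2^{-4}I',2^{-6}J')\in\Inv_p(\cF)}$ of the integrand onto $\Inv_p(\cF)$. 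Finally, for fixed $(I,J)$ the inner sum $\sum_{\sigma\in E^{I',J'}(\QQ_p)/2E^{I',J'}(\QQ_p)}\phi_p(f_\sigma)$ counts exactly those classes $\sigma$ whose representative form lies in $\LLL_p(\cF)$, which by the definition of the induced local conditions \Cref{def:selmer structures from selmer bundle} equals $\#\LLL(E^{I,J})_p$; here one uses that the models $E^{I',J'}$ and $E^{I,J}$ are isomorphic via $(x,y)\mapsto(4x,8y)$, so that the $2$-descent data agree and $\#E^{I',J'}(\QQ_p)[2]=\#E^{I,J}(\QQ_p)[2]$. Substituting these identifications gives the stated formula. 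I expect the main obstacle to be purely bookkeeping: keeping the two invariant normalizations straight so that the $\abs{2^{10}}_p$ Jacobian and the model identification are applied consistently, rather than any genuine analytic difficulty, since the analytic content is entirely contained in \Cref{BS:Cor 3.8}.
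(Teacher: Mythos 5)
Your proposal is correct and follows essentially the same route as the paper: apply \Cref{BS:Cor 3.8} to the indicator function of $\LLL_p(\cF)\cap V_{\ZZ_p}$, then perform the change of variables $(2^4I,2^6J)\mapsto(I,J)$ to extract the factor $\abs{2^{10}}_p$ and land on $\Inv_p(\cF)$, identifying the inner sum with $\#\LLL(E^{I,J})_p$ via \Cref{def:selmer structures from selmer bundle}. Your verification of the hypotheses of \Cref{BS:Cor 3.8} (in particular the divisibility conditions coming from $\Inv_p(\cF)\subseteq 3\ZZ_p\times 27\ZZ_p$) is more explicit than the paper's one-line appeal to continuity, but the argument is the same.
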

\begin{proof}
    Define the function $\phi_p:V_{\ZZ_p}\to \set{0,1}$, by $\phi_p(f)=1$ if and only if $f\in \LLL_p(\cF)$.  By definition, $\phi_p$ is continuous and so by \Cref{BS:Cor 3.8} we obtain
    \begin{align*}
    \int_{\LLL_p(\cF)}\frac{1}{m_p(f)}df&=
    \int_{\ZZ_p}\frac{\phi_p(f)}{m_p(f)}df
    \\&=\abs{\frac{1}{27}}_p\Vol{\PGL_2(\ZZ_p)}\int_{\substack{(I,J)\in \ZZ_p^2\\\Delta(I,J)\neq 0}}\frac{\#\LLL(E^{I,J})_p}{\#E^{I,J}(\QQ_p)[2]}dIdJ
    \\&=\abs{\frac{2^{10}}{27}}_p\Vol{\PGL_2(\ZZ_p)}\int_{\substack{(I,J)\in \Inv_p(\cF)}}\frac{\#\LLL(E^{I,J})_p}{\#E^{I,J}(\QQ_p)[2]}dIdJ
    \end{align*}
    where the final step is given by the variable change $(2^{4}I, 2^{6}J)\mapsto (I, J)$, noting that $E^{I,J}$ is $\QQ_p$ isomorphic to $E^{2^4I, 2^6J}$.
\end{proof}
We then have the weighted indicator function for counting elements in a $2$-Selmer bundle.
\begin{definition}
    For every $2$-Selmer bundle $\LLL$ and prime number $p$, let 
    \[\phi_{\LLL,p}:V_{\ZZ_p}\to\set{0,1}\]
    be the indicator function which is $1$ if $f\in \LLL_p(\cF)$ and $0$ else.  Moreover, let $\psi_{\LLL,p}:=\phi_{\LLL,p}/m_p$, and denote $\psi_{\LLL}:=\prod_{p}\psi_{\LLL,p}:V_{\ZZ}\to \set{0,1}$.
\end{definition}

Finally, we need to know that this weighted indicator function (with which we wish to apply \Cref{BS:Theorem 2.21}) is indeed acceptable in the sense of \Cref{def:AcceptableFunction}.
\begin{lemma}\label{lem:psiL is acceptable}
    Let $\LLL$ be a $2$-Selmer bundle.  Then the function $\psi_{\LLL}$ is acceptable (in the sense of \Cref{def:AcceptableFunction}), with local functions $\psi_{\LLL,p}=\phi_{\LLL,p}/m_p$.
\end{lemma}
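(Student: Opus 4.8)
The plan is to verify directly the three requirements of \Cref{def:AcceptableFunction} for the family of local functions $\psi_{\LLL,p}=\phi_{\LLL,p}/m_p$, exploiting the fact (recorded at the end of \Cref{subsec:WeightedIntOrbits}) that the family $\mathbbm{1}_p/m_p$ is already known to be acceptable. Since every $f\in\LLL_p(\cF)$ is $\QQ_p$-soluble by the definition of a $2$-Selmer bundle, we have $\phi_{\LLL,p}=\phi_{\LLL,p}\cdot\mathbbm{1}_p$ and hence $\psi_{\LLL,p}=\phi_{\LLL,p}\cdot(\mathbbm{1}_p/m_p)$. This factorisation reduces each condition to combining a property of the clopen indicator $\phi_{\LLL,p}$ with the corresponding property of $\mathbbm{1}_p/m_p$.

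First I would check that $\psi_{\LLL}=\prod_p\psi_{\LLL,p}$ is defined by congruence conditions, i.e. conditions (i) and (ii). For (i), fix $f\in V_\ZZ$ with $\Delta(f)\neq 0$; only finitely many primes $p$ satisfy $p^2\mid\Delta(f)$, and for the remaining odd primes with $p\geq C(\LLL)$ the axiom \Cref{enum:2selbundle unif p axiom} of \Cref{def:2selbundle} gives $f\in\LLL_p(\cF)$, so $\phi_{\LLL,p}(f)=1$, while \Cref{BS:Proposition 3.18} gives $m_p(f)=1$. Thus $\psi_{\LLL,p}(f)=1$ for all but finitely many $p$, the product is eventually $1$ and so converges, and by \Cref{BS:prop3.6} its value is $\phi_{\LLL}(f)/m(f)=\psi_{\LLL}(f)$. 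For (ii), axiom \Cref{enum:2selbundle p set} says $\LLL_p(\cF)\cap V_{\ZZ_p}$ is clopen, so $\phi_{\LLL,p}$ is locally constant on all of $V_{\ZZ_p}$; combining this with the fact that $\mathbbm{1}_p/m_p$ is locally constant outside a closed measure-zero set $S_p$ (the very content of its acceptability), the product $\psi_{\LLL,p}$ is locally constant outside $S_p$.

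It then remains to verify the extra acceptability clause: for all but finitely many $p$, $\psi_{\LLL,p}(f)=1$ whenever $p^2\nmid\Delta(f)$. I would take $p$ odd, with $p\geq C(\LLL)$, and outside the finite exceptional set implicit in \Cref{BS:Proposition 3.18}. For such $p$ and any $f$ with $p^2\nmid\Delta(f)$, axiom \Cref{enum:2selbundle unif p axiom} yields $f\in\LLL_p(\cF)$ (so $\phi_{\LLL,p}(f)=1$) and \Cref{BS:Proposition 3.18} yields $m_p(f)=1$, whence $\psi_{\LLL,p}(f)=1$.

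The one point requiring care—and the step I expect to be the main obstacle—is a mismatch of domains: axiom \Cref{enum:2selbundle unif p axiom} and \Cref{BS:Proposition 3.18} are phrased for $f\in V_\ZZ$, whereas \Cref{def:AcceptableFunction} concerns $\psi_{\LLL,p}$ on all of $V_{\ZZ_p}$. I expect to resolve this by a density argument: the locus $\{f\in V_{\ZZ_p}:p^2\nmid\Delta(f)\}$ is open and $\Delta(f)\neq 0$ there, so it is disjoint from the measure-zero set where $m_p$ fails to be locally constant; since $V_\ZZ$ is dense in $V_{\ZZ_p}$, the intersection $V_\ZZ\cap\{p^2\nmid\Delta\}$ is dense in this open locus, and both $\phi_{\LLL,p}$ and $m_p$ are locally constant on it. Hence the equalities $\phi_{\LLL,p}=1$ and $m_p=1$ established on the dense subset $V_\ZZ$ propagate to the whole locus. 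With this in place all three conditions hold and $\psi_{\LLL}$ is acceptable with the stated local functions.
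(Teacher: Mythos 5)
Your proof is correct and follows essentially the same route as the paper's: convergence of the local product via \Cref{BS:prop3.6}, local constancy from the clopen-ness in Axiom \Cref{enum:2selbundle p set}, and the acceptability clause from Axiom \Cref{enum:2selbundle unif p axiom} together with \Cref{BS:Proposition 3.18}. Your density argument passing from $V_\ZZ$ to $V_{\ZZ_p}$ is a legitimate (and careful) way to handle a point the paper's proof leaves implicit, but it does not change the substance of the argument.
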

\begin{proof}
    That these local functions converge to the required global one is immediate from \Cref{BS:prop3.6}.  The second condition, that $\psi_{\LLL,p}$ is locally constant outside of a closed set of measure $0$, is clear from the definitions of the maps $\phi_{\LLL,p}$ and $m_p$.  Finally, to obtain the final condition, we need that for sufficiently large prime $p$, $\psi_{\LLL, p}(f)=1$ whenever $p^2\nmid \Delta(f)$.  This follows from \Cref{BS:Proposition 3.18} and Axiom \Cref{enum:2selbundle unif p axiom} for the $2$-Selmer bundle $\LLL$.
\end{proof}

We are now ready to state the main statistical corollary.

\begin{theorem}\label{thm:AvgSizeOf2SelBundle}
Let $\LLL$ be a $2$-Selmer bundle.  Then, for each large family $\cF$ of elliptic curves,
\[\frac{\sum_{\substack{E\in \cF\\H'(E)<X}}(\#\sel{\LLL}(\QQ, E[2])-1)}{\sum_{\substack{E\in \cF\\H'(E)<X}}1}=2\MMM^\LLL_\infty(\cF;X)\prod_p \MMM^\LLL_p(\cF)+o(1),\]
where the local masses are
\begin{align*}
    \MMM^\LLL_\infty(\cF;X)&:=\frac{\int_{\substack{(I,J)\in \Inv_\infty(\cF)\\H(I,J)<X}}\frac{\#\LLL(E^{I,J})_\infty}{\#E^{I,J}(\RR)[2]}dIdJ}{\int_{\substack{(I,J)\in \Inv_\infty(\cF)\\H(I,J)<X}}dIdJ}
    \\\MMM^\LLL_p(\cF)&:=\frac{\int_{\substack{(I,J)\in \Inv_p(\cF)}}\frac{\#\LLL(E^{I,J})_p}{\#E^{I,J}(\QQ_p)[2]}dIdJ}{\int_{\substack{(I,J)\in \Inv_p(\cF)}}dIdJ}
\end{align*}
\end{theorem}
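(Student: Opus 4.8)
The strategy is to translate the left-hand side into a count of $\PGL_2(\QQ)$-equivalence classes of binary quartic forms, feed this into the Bhargava--Shankar counting machine \Cref{BS:Theorem 2.21}, and then divide by the count of curves, at which point all the auxiliary volume factors collapse to the Tamagawa number of $\PGL_2$. I would begin by reinterpreting the numerator. By \Cref{def:selmer structures from selmer bundle} an element of $\sel{\LLL}(\QQ,E[2])$ is a global class whose local restrictions lie in $\LLL(E)_v$, and these local sets are by construction the $\delta$-images of the classes corresponding to $\LLL_v(\cF)$ under the binary quartic form correspondence. Thus \Cref{thm:Integral BQF with proper invariants correspond to selmer elements} (applied with the model $E^{2^4I,2^6J}$) upgrades to a bijection between the nonidentity elements of $\sel{\LLL}(\QQ,E[2])$ and the $\PGL_2(\QQ)$-equivalence classes of \emph{irreducible} integral forms $f\in\LLL(\cF)$ with invariants $(2^4I(E),2^6J(E))$; the identity corresponds to the unique class of forms with a rational linear factor, which the ``$-1$'' removes. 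Since the invariants of $f$ pin down $E$, summing over $E\in\cF$ with $H'(E)<X$ (equivalently over forms with $H(f)<2^{12}X$, using $H(f)=2^{12}H'(E)$) gives
\[\sum_{\substack{E\in\cF\\H'(E)<X}}\bigl(\#\sel{\LLL}(\QQ,E[2])-1\bigr)=\#\bigl\{\PGL_2(\QQ)\text{-classes of irreducible }f\in\LLL(\cF):H(f)<2^{12}X\bigr\}.\]

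Next I would pass to $\PGL_2(\ZZ)$-orbits. By \Cref{lem:m(f) is same as n(f) often enough} the naive weight $1/n(f)$ may be replaced by $1/m(f)$ up to a negligible $O(X^{3/4+\epsilon})$, and \Cref{BS:prop3.6} factors $m=\prod_p m_p$, so the display above becomes $\sum_i N_{\psi_\LLL}(V_\ZZ^{(i)};2^{12}X)+O(X^{3/4+\epsilon})$, the sum running over the archimedean types recorded in \Cref{tab:Possibilities for Linfty(F)}. Since $\psi_\LLL$ is acceptable by \Cref{lem:psiL is acceptable}, I can invoke \Cref{BS:Theorem 2.21}, and then substitute the archimedean mass of \Cref{prop:Archimedean massL} (evaluated at height $2^{12}X$ and rescaled to curve invariants via $(I,J)\mapsto(2^4I,2^6J)$, which introduces a Jacobian $2^{10}$) together with the non-archimedean masses of \Cref{lem:Weighted mass over L_p}. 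The crucial simplification is that the resulting factor $\tfrac{2^{10}}{27}$ at infinity and the factors $\abs{\tfrac{2^{10}}{27}}_p$ at the finite primes multiply, by the product formula, to $\prod_v\abs{\tfrac{2^{10}}{27}}_v=1$. Hence the numerator equals
\[\Vol{\PGL_2(\ZZ)\backslash\PGL_2(\RR)}\prod_p\Vol{\PGL_2(\ZZ_p)}\cdot\left(\int_{\substack{(I,J)\in\Inv_\infty(\cF)\\H(I,J)<X}}\frac{\#\LLL(E^{I,J})_\infty}{\#E^{I,J}(\RR)[2]}dIdJ\right)\prod_p\left(\int_{\Inv_p(\cF)}\frac{\#\LLL(E^{I,J})_p}{\#E^{I,J}(\QQ_p)[2]}dIdJ\right)+o(X^{5/6}).\]

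Finally I would compute the denominator $\sum_{E\in\cF,\,H'(E)<X}1$ as a standard lattice-point count over $(I,J)\in\Inv(\cF)$ with $H(I,J)<X$, yielding $\bigl(\int_{\Inv_\infty(\cF),\,H<X}dIdJ\bigr)\prod_p\bigl(\int_{\Inv_p(\cF)}dIdJ\bigr)+o(X^{5/6})$. Since both numerator and denominator are $\asymp X^{5/6}$, dividing one by the other turns the error terms into $o(1)$, and every archimedean and $p$-adic integral pairs with its counterpart to produce exactly $\MMM^\LLL_\infty(\cF;X)$ and $\MMM^\LLL_p(\cF)$. What remains is the $\LLL$- and $\cF$-independent constant $\Vol{\PGL_2(\ZZ)\backslash\PGL_2(\RR)}\prod_p\Vol{\PGL_2(\ZZ_p)}$. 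The main obstacle is to identify this constant as $2$: in the measure normalisation of \cite{MR3272925} it is precisely the Tamagawa number $\tau(\PGL_2)=2$. Rather than recompute these volumes directly, I would calibrate against the known case. The constant is universal, so specialising to the ordinary $2$-Selmer bundle $\SSS$ of \Cref{ex:2-Selmer group as 2-selstruct} over the family of all elliptic curves determines it: there the left-hand average is the Bhargava--Shankar value $2$, while the local Euler-characteristic identity $\#\bigl(E(\QQ_v)/2E(\QQ_v)\bigr)=\abs{2}_v^{-1}\#E(\QQ_v)[2]$ together with the product formula gives $\MMM^\SSS_\infty\prod_p\MMM^\SSS_p=\prod_v\abs{2}_v^{-1}=1$, which forces the universal constant to equal $2$.
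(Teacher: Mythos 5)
Your proof follows the same route as the paper: translate the numerator into a weighted count of $\PGL_2(\QQ)$-classes of forms in $\LLL(\cF)$ via \Cref{thm:Integral BQF with proper invariants correspond to selmer elements}, pass to $\PGL_2(\ZZ)$-orbits weighted by $1/m(f)$ using \Cref{lem:m(f) is same as n(f) often enough} and \Cref{BS:prop3.6}, feed $\psi_\LLL$ into \Cref{BS:Theorem 2.21} via \Cref{lem:psiL is acceptable}, substitute the masses of \Cref{prop:Archimedean massL} and \Cref{lem:Weighted mass over L_p}, and divide by the count of curves from \cite{MR3272925}*{Thm 3.17}. Two small remarks. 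First, the nonidentity Selmer elements correspond to classes of forms with \emph{no rational linear factor}, which is not the same as the irreducible forms counted by $N_{\psi_\LLL}(V_\ZZ^{(i)};\cdot)$: the discrepancy consists of forms factoring as a product of two irreducible quadratics, and one needs \cite{MR3272925}*{Lemma 2.3} to see that these contribute only $O(X^{2/3+\epsilon})$ orbits; the paper makes this step explicit and your write-up elides it, though it is routine. Second, where the paper simply quotes $\Vol{\PGL_2(\ZZ)\backslash\PGL_2(\RR)}=2\zeta(2)$ and $\prod_p\Vol{\PGL_2(\ZZ_p)}=\zeta(2)^{-1}$ to get the constant $2$, you instead calibrate against the known Bhargava--Shankar average for the ordinary $2$-Selmer bundle using $\#\bigl(E(\QQ_v)/2E(\QQ_v)\bigr)=\abs{2}_v^{-1}\#E(\QQ_v)[2]$ and the product formula. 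That calibration is valid (it only uses their Theorem 1.1 as an external input, not this theorem), and it is a pleasant sanity check, but it buys nothing over quoting the volumes, which are computed in the same normalisation in \cite{MR3272925} anyway.
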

\begin{proof}
    By \Cref{lem:m(f) is same as n(f) often enough} (and the explanation below it) and \Cref{thm:Integral BQF with proper invariants correspond to selmer elements} the numerator of the left hand side is equal to the number of $\PGL_2(\ZZ)$-orbits in $\LLL(\cF)$ of height at most $2^{12}X$ with no rational linear factor counted with weights $1/m(f)$ (up to acceptable error).  Moreover, by \cite{MR3272925}*{Lemma 2.3} the number of $\PGL_2(\ZZ)$-orbits in $\LLL(\cF)$ of height at most $2^{12}X$ which are reducible but factor as a pair of irreducible quadratics is at worst $\bigo{X^{2/3+\epsilon}}$ and so by counting orbits of irreducible forms we will be within acceptable error of counting the orbits of forms with no rational linear factor. 

    Thus by \Cref{BS:Theorem 2.21}, \Cref{lem:psiL is acceptable} and \Cref{tab:Possibilities for Linfty(F)} we have
    \begin{align*}
        \sum_{\substack{E\in \cF\\H'(E)<X}}(\#\sel{\LLL}(\QQ, E[2])-1)
        &=N_{\psi_{\LLL}}(V_\ZZ\cap\LLL_\infty(\cF);2^{12}X)
        \\&=N(V_\ZZ\cap\LLL_\infty(\cF);2^{12}X)\prod_{p}\int_{f\in V_{\ZZ_p}}\psi_p(f)df+o(X^{5/6})
    \end{align*}
    Then, using the mass formulae in \Cref{prop:Archimedean massL} and \Cref{lem:Weighted mass over L_p}, this gives
    \begin{align*}
        &N(V_\ZZ\cap\LLL_\infty(\cF);2^{12}X)\prod_{p}\int_{f\in V_{\ZZ_p}}\psi_p(f)df+o(X^{5/6})
        \\&=\frac{2^{10}}{27}\Vol{\PGL_2(\ZZ)\backslash\PGL_2(\RR)}\int\limits_{\substack{(I,J)\in \Inv_\infty(\cF)\\H(I,J)<X}}\frac{\#\LLL(E^{I,J})_\infty}{\#E^{I,J}(\RR)[2]}dIdJ\\&\prod_p\left(\abs{\frac{2^{10}}{27}}_p\Vol{\PGL_2(\ZZ_p)}\int\limits_{(I,J)\in \Inv_p(\cF)}\frac{\#\LLL(E^{I,J})_p}{\#E^{I,J}(\QQ_p)[2]}dIdJ\right) + o(X^{5/6})
        \\&=2\left(\int\limits_{\substack{(I,J)\in \Inv_\infty(\cF)\\H(I,J)<X}}\frac{\#\LLL(E^{I,J})_\infty}{\#E^{I,J}(\RR)[2]}dIdJ\right)\prod_p\left(\int\limits_{(I,J)\in \Inv_p(\cF)}\frac{\#\LLL(E^{I,J})_p}{\#E^{I,J}(\QQ_p)[2]}dIdJ\right) + o(X^{5/6}).
    \end{align*}
    where the last equality uses that $\prod_p\Vol{\PGL_2(\ZZ_p)}=\zeta(2)^{-1}$ as well as the equality $\Vol{\PGL_2(\ZZ)\backslash\PGL_2(\RR)}=2\zeta(2)$.  Moreover by \cite{MR3272925}*{Thm 3.17}
    \[\sum_{\substack{E\in \cF\\H'(E)<X}}1=\left(\int_{\substack{(I,J)\in \Inv_\infty(\cF)\\H(I,J)<X}}dIdJ\right)\prod_p\left(\int_{(I,J)\in \Inv_p(\cF)}dIdJ\right) + o(X^{5/6}).\]
    Thus we have the result.
\end{proof}

This reduces computing the average size of certain Selmer structures on elliptic curves to proving that they can be packaged as Selmer groups arising from $2$-Selmer bundles, a purely algebraic task, and then applying \Cref{thm:AvgSizeOf2SelBundle} and computing local masses.

\subsection{Application: The Corestriction Selmer Bundle}
\label{subsec:CoresSelmerBundle}
Our goal is to apply \Cref{thm:AvgSizeOf2SelBundle} to count the average size of corestriction Selmer groups.  In order to do this, we have the, somewhat algebraic, task of showing that these Selmer groups arise from a $2$-Selmer bundle.  As in the previous section, we take $\cF$ to be a large family of elliptic curves.
\begin{definition}\label{def:cores bundle}
    Let $K/\QQ$ be a multiquadratic extension and let $S:=\ker(\QQ^\times/\QQ^{\times2}\to K^\times/K^{\times2})$.  Define the $2$-Selmer bundle $\CCC(K)$ given by, for each place $v\in\places_\QQ$,
        \[\CCC(K)_v(\cF)=\set{f\in V_{\QQ_v}~:~\substack{(2^{-4}I(f), 2^{-6}J(f))\in \Inv_v(\cF)\\\textnormal{and }f\textnormal{ is }(\QQ_v,S)\textnormal{-soluble}}}.\]
\end{definition}

Now we verify that this does indeed define a $2$-Selmer bundle.
\begin{lemma}\label{lem:CoresSelBundle is SelBundle}
    For every multiquadratic extension $K/\QQ$, $\CCC(K)$ is a $2$-Selmer bundle, and for each elliptic curve $E/\QQ$ and place $v\in\places_K$  the local group is that of the corestriction Selmer structure:
    \[\CCC(K)(E)_v=N_{K_w}E(K_w)/2E(\QQ_v).\]
\end{lemma}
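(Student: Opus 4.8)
The plan is to verify the four defining axioms of a $2$-Selmer bundle from \Cref{def:2selbundle} for $\CCC(K)$, and then separately identify the associated local group $\CCC(K)(E)_v$ with the corestriction Selmer condition. The second task is where the real content lies, since the axiom-checking is largely a matter of unwinding definitions and citing earlier results.

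First I would dispense with the axioms. Axiom \ref{enum:2selbundle infty set} requires understanding $\CCC(K)_\infty(\cF)$: at the infinite place, $S$ injects into $\RR^\times/\RR^{\times2}=\{\pm1\}$, so being $(\RR,S)$-soluble is either just $\RR$-solubility (if $S$ maps trivially, i.e.\ every generator of $S$ is positive in $\RR$) or requires both $g$ and $-g$ to be $\RR$-soluble (if $S$ contains a negative class), the latter forcing a linear factor over $\RR$; either way one lands in the permitted type $1$/type $2$ dichotomy, so this axiom holds. For Axiom \ref{enum:2selbundle p set}, I would argue that $(\QQ_p,S)$-solubility is a closed-open $\PGL_2(\QQ_p)$-invariant condition: solubility of $\theta g$ is $\PGL_2(\QQ_p)$-invariant for each fixed $\theta$ (the twisted action scales values by a square, cf.\ the remark after the $(F,S)$-soluble definition), and a finite intersection over $\theta\in S$ of such conditions preserves both invariance and the closed-open property. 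Axioms \ref{enum:2selbundle unram cdn} and \ref{enum:2selbundle unif p axiom} I would reduce to the good-reduction case: for all but finitely many $p$, $p$ is unramified in $K$ with residue degree making $S$ trivial in $\QQ_p^\times/\QQ_p^{\times2}$, so $(\QQ_p,S)$-solubility collapses to ordinary $\QQ_p$-solubility, and then \Cref{BS:Proposition 3.18} supplies the uniform bound needed for \ref{enum:2selbundle unif p axiom} exactly as in \Cref{ex:2-Selmer group as 2-selstruct}.

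The heart of the proof is the identification $\CCC(K)(E)_v=N_{K_w/\QQ_v}E(K_w)/2E(\QQ_v)$. Here I would chain together the results already established in \Cref{sec:CoresSelmerGroups,sec:SelmerandBQF}. By \Cref{def:selmer structures from selmer bundle}, $\CCC(K)(E)_v$ is the $\delta$-image of those classes in $E(\QQ_v)/2E(\QQ_v)$ corresponding via \Cref{cor:solBQFsAreE/2} to $(\QQ_v,S_v)$-soluble forms. But \Cref{cor:local norms correspond to S-soluble BQFS} says precisely that $\delta'$ matches the $(\QQ_v,S_v)$-soluble forms with the intersection $\bigcap_{\theta\in S}\SSS_v^{(\theta)}(\QQ;E)$. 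Finally \Cref{prop:Multiquadratic extn desc of CandF with Twisted Kummers} identifies this intersection with $\CCC_v(K/\QQ;E)=N_{K_w/\QQ_v}E(K_w)+2E(\QQ_v)/2E(\QQ_v)$, which is the claimed corestriction local condition. The one subtlety to handle carefully is the compatibility between the two diagrams from \Cref{thm:CremonaFisherSummary}: the map $\delta$ appearing in \Cref{def:selmer structures from selmer bundle} (connecting map for $E^{2^4I,2^6J}$) must be reconciled with the $\delta'$ appearing in \Cref{cor:local norms correspond to S-soluble BQFS}, but commutativity of the Cremona--Fisher diagram \eqref{eq:CremonaFisherDiagram} makes $\delta$ and $\delta'$ interchangeable on soluble orbits.

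The main obstacle I anticipate is bookkeeping with the twofold rescaling of invariants. Throughout \Cref{sec:SelmerandBQF} the Weierstrass model is fixed as $y^2=x^3+2^4Ax+2^6B$ with invariants $(2^4I,2^6J)$, whereas the twisted Kummer corollaries are stated for the bare model $y^2=x^3-\tfrac{I}{3}x-\tfrac{J}{27}$; I would need to check that the $\QQ_v$-isomorphism between these models carries $\bigcap_\theta\SSS_v^{(\theta)}$ for one model onto the corresponding intersection for the other, so that \Cref{cor:local norms correspond to S-soluble BQFS} applies to the rescaled model used in the bundle. This is routine functoriality of the whole correspondence under isomorphism, but it is the step where a sign or scaling error would most easily creep in, so it deserves explicit attention.
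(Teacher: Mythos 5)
There is a genuine gap in your verification of Axioms \ref{enum:2selbundle unram cdn} and \ref{enum:2selbundle unif p axiom}. You claim that for all but finitely many $p$ the group $S=\ker(\QQ^\times/\QQ^{\times2}\to K^\times/K^{\times2})$ becomes trivial in $\QQ_p^\times/\QQ_p^{\times2}$, so that $(\QQ_p,S)$-solubility collapses to ordinary $\QQ_p$-solubility. This is false: for $K=\QQ(\sqrt{D})$ one has $S=\gp{D}$, and $D$ is a nonsquare in $\QQ_p$ precisely at the inert primes, which have density $1/2$. At such $p$ the extension $K_w/\QQ_p$ is the unramified quadratic extension, $S$ maps to a nontrivial class, and $(\QQ_p,S)$-solubility is a priori a strictly stronger condition than $\QQ_p$-solubility. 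Your argument therefore only handles the totally split primes and leaves both axioms unverified at infinitely many places.

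The missing ingredient is \Cref{for good I0I1 unram have cores is sel}: if $p$ is unramified in $K$ and $E/\QQ_p$ has reduction type $I_0$ or $I_1$ (which is forced by $p^2\nmid\Delta(f)$ via Tate's algorithm), then $N_{K_w/\QQ_p}E(K_w)=E(\QQ_p)$ by Mazur's theorem, so $\CCC_p(K/\QQ;E)=\SSS_p(\QQ;E)$ and, via \Cref{cor:local norms correspond to S-soluble BQFS}, every $\QQ_p$-soluble form with the right invariants is automatically $(\QQ_p,S)$-soluble. Combined with \Cref{BS:Proposition 3.18} this gives Axiom \ref{enum:2selbundle unif p axiom} with $C(\CCC(K))$ any bound exceeding $3$ and the ramified primes of $K$, and the same norm-surjectivity statement underlies Axiom \ref{enum:2selbundle unram cdn}. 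The remainder of your proposal -- Axioms \ref{enum:2selbundle infty set} and \ref{enum:2selbundle p set} via closure of the axioms under finite intersections over $\theta\in S$, and the identification of $\CCC(K)(E)_v$ with $\braces{N_{K_w/\QQ_v}E(K_w)+2E(\QQ_v)}/2E(\QQ_v)$ by chaining \Cref{cor:local norms correspond to S-soluble BQFS} with \Cref{prop:Multiquadratic extn desc of CandF with Twisted Kummers} -- is correct and matches the paper's route.
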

\begin{proof}

    Note that for every $v\in\places_\QQ$ we can write
    \[\CCC(K)_v(\cF)=\bigcap_{\theta\in S}\SSS_v^{(\theta)}(\cF),\]
    where the $\SSS_v^{(\theta)}(\cF)$ are as in \Cref{ex:2-Selmer group as 2-selstruct}.  It is clear that each $\SSS_v^{(\theta)}$ satisfies both Axioms \Cref{enum:2selbundle infty set} and \Cref{enum:2selbundle p set}, and so since these axioms are invariant under taking finite intersections they hold also for $\CCC(K)_v(\cF)$.

    By \Cref{cor:local norms correspond to S-soluble BQFS} we see that for each pair $(I,J)\in\Inv(\cF)$, and place $v\in\places_\QQ$ the set of $\PGL_2(\QQ_v)$-equivalence classes in
    \[\set{f\in \CCC(K)_v(\cF)~:~(I(f),J(f))=(I,J)}\]
    corresponds to the local corestriction group $\CCC_v(K/\QQ;E^{I,J})\subset H^1(\QQ, E^{I,J}[2])$.   By definition, all but finitely many $v$ satisfy 
    \[\CCC_v(K/\QQ;E^{I,J})=H^1_{\nr}(\QQ, E[2])=\SSS_v(\QQ; E^{I,J}).\]

    Now for all but finitely many $v$ the local corestriction group corresponds to the set of soluble forms with invariants $I,J$ as required for Axiom \Cref{enum:2selbundle unram cdn}.

    It remains to prove that Axiom \Cref{enum:2selbundle unif p axiom} holds.  Let $p\geq 5$ be a prime number which is coprime to $\Delta_K$.  Let $f\in V_\ZZ$ be such that $p^2\nmid \Delta(f)$, with associate invariants $(I,J)=(I(f),J(f))$.  By \Cref{BS:Proposition 3.18} $f$ is $\QQ_p$-soluble and so its $\PGL_2(\QQ_p)$-equivalence class corresponds to an element of $\SSS_p(\QQ;E)$.

    Note that (by Tate's algorithm) the elliptic curve $E=E^{I,J}/\QQ_p$ has reduction type $I_0$ or $I_1$ by the assumption on $\Delta(f)$.  By \Cref{for good I0I1 unram have cores is sel} we have
    \[\CCC_p(K/\QQ;E)=\SSS_p(F;E),\]
    and so the $\PGL_2(\QQ_p)$-equivalence class of $f$ corresponds to an element of $\CCC_p(K/\QQ;E)$, which by \Cref{thm:Corestriction Selmer corresponds to loc S-sol integral BQFs} implies $f\in \CCC(K)_v(\cF)$.
\end{proof}

Thus we have the following corollary of \Cref{thm:AvgSizeOf2SelBundle}.

\begin{corollary}\label{cor:large family cores selmer avg}
    Let $K/\QQ$ be a multiquadratic extension.  Then,
        \[\frac{\sum_{\substack{E\in \cF\\H'(E)<X}}(\#\sel{\CCC(K)}(\QQ, E[2])-1)}{\sum_{\substack{E\in \cF\\H'(E)<X}}1}=2\MMM^{\CCC(K)}_\infty(\cF;X)\prod_p \MMM^{\CCC(K)}_p(\cF)+o(1),\]
    where the local masses are
    \begin{align*}
        \MMM^{\CCC(K)}_\infty(\cF;X)&:=\frac{\int_{\substack{(I,J)\in \Inv_\infty(\cF)\\H(I,J)<X}}\frac{\#\CCC(K)(E^{I,J})_\infty}{\#E^{I,J}(\RR)[2]}dIdJ}{\int_{\substack{(I,J)\in \Inv_\infty(\cF)\\H(I,J)<X}}dIdJ},
        \\\MMM^{\CCC(K)}_p(\cF)&:=\frac{\int_{\substack{(I,J)\in \Inv_p(\cF)}}\frac{\#\CCC(K)(E^{I,J})_p}{\#E^{I,J}(\QQ_p)[2]}dIdJ}{\int_{\substack{(I,J)\in \Inv_p(\cF)}}dIdJ}.
    \end{align*}
\end{corollary}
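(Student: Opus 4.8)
The plan is to recognise this statement as a direct specialisation of \Cref{thm:AvgSizeOf2SelBundle}, whose conclusion holds verbatim for an arbitrary $2$-Selmer bundle $\LLL$. Since the local masses $\MMM^{\CCC(K)}_\infty(\cF;X)$ and $\MMM^{\CCC(K)}_p(\cF)$ appearing in the corollary are nothing but the quantities $\MMM^{\LLL}_\infty(\cF;X)$ and $\MMM^{\LLL}_p(\cF)$ of that theorem evaluated at $\LLL=\CCC(K)$, I would only need to justify two things: that $\CCC(K)$ is a legitimate $2$-Selmer bundle, and that the Selmer group $\sel{\LLL}(\QQ,E[2])$ produced by the bundle machinery of \Cref{def:selmer structures from selmer bundle} agrees with the corestriction Selmer group of \Cref{def:cores selmer}.

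Both points are already supplied by \Cref{lem:CoresSelBundle is SelBundle}. There it is verified that $\CCC(K)$ satisfies the axioms of \Cref{def:2selbundle}, and it is computed that for each $E\in\cF$ and each place $v$ the local group is $\CCC(K)(E)_v=N_{K_w}E(K_w)/2E(\QQ_v)$, which is exactly the local corestriction condition $\CCC_v(K/\QQ;E)$ of \Cref{def:cores selmer}. Since both $\sel{\CCC(K)}(\QQ,E[2])$ and $\sel{\LLL}(\QQ,E[2])$ for $\LLL=\CCC(K)$ are defined as the classes in $H^1(\QQ,E[2])$ whose restriction at every place lands in these identical local groups, the two global Selmer groups coincide.

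With these identifications, the proof would conclude by applying \Cref{thm:AvgSizeOf2SelBundle} to the $2$-Selmer bundle $\LLL=\CCC(K)$ and reading off the matching local masses. The genuine content has all been front-loaded: into \Cref{lem:CoresSelBundle is SelBundle} for the algebraic verification that the corestriction conditions assemble into a bundle, and into \Cref{thm:AvgSizeOf2SelBundle} for the analytic averaging over $\cF$. I therefore expect no remaining obstacle of substance; the one step worth double-checking is the bookkeeping that each $\MMM^{\CCC(K)}_\bullet$ is literally the $\LLL=\CCC(K)$ instance of $\MMM^{\LLL}_\bullet$, which holds by inspection of the two definitions.
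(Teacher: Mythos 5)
Your proposal matches the paper's proof exactly: the paper deduces the corollary as an immediate consequence of \Cref{lem:CoresSelBundle is SelBundle} (which verifies that $\CCC(K)$ is a $2$-Selmer bundle with the correct local groups) and \Cref{thm:AvgSizeOf2SelBundle}. No gaps; the identification of the local masses is indeed just bookkeeping.
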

\begin{proof}
    Immediate from \Cref{lem:CoresSelBundle is SelBundle} and \Cref{thm:AvgSizeOf2SelBundle}.
\end{proof}

\noindent Of course: the local masses above depend on the family $\cF$, and so we cannot really go much further in this generality.
\section{Average Size of Intersections}\label{sec:LocalDensities}
We will now apply \Cref{cor:large family cores selmer avg} to the most common family of interest:  all elliptic curves.  We will begin by stating the main result, which will require a definition, and the rest of the section will comprise a proof of this and then of the theorems in the introduction.

\begin{definition}\label{def:L_v}
    For every multiquadratic extension $K/\QQ$ and each prime number $p\geq 5$ define local factors
    \[L_p(\CCC(K)):=\begin{cases}
        \frac{(p-1)(p^4-p^3+p^2-p+1)(46p^5+62p^4+79p^3+84p^2+84p+48)}{48(p^{10}-1)} & \substack{\textnormal{if }K/\QQ\textnormal{ is ramified and}\\\textnormal{quadratic at }p,}\\
        \frac{16p^{11}+16p^{10}-8p^9+8p^8-8p^7-10p^6-4p^5+7p^4-p^3-8p^2-24p-1}{16(p^{10}-1)(p+1)} & \substack{\textnormal{if }K/\QQ\textnormal{ is unramified and}\\\textnormal{quadratic at }p,}\\
        \frac{(p-1)(p^4-p^3+p^2-p+1)(5p^5+15p^4+13p^3+9p^2+13p+8)}{8(p^{10}-1)} &\substack{\textnormal{if }K/\QQ\textnormal{ is biquadratic at }p,}\\
        1&\substack{\textnormal{if }K/\QQ\textnormal{ is totally split at }p.}
    \end{cases}\]
    For $p\in\set{2,3}$ we define some `coarse' local factors
    \[L_p(\CCC(K)):=\begin{cases}
        1&\textnormal{if }K/\QQ\textnormal{ is totally split at }p,\\
        \frac{1}{2^{2+[K_w:\QQ_2]}}&\textnormal{if }p=2\textnormal{ and }K/\QQ\textnormal{ is not totally split at }p,\\
        \frac{1}{4}&\textnormal{if }p=3\textnormal{ and }K/\QQ\textnormal{ is not totally split at }p.
    \end{cases}\]
    Moreover, define an archimedean factor
    \[L_{\infty}(\CCC(K)):=\begin{cases}
        \frac{1}{2}&\textnormal{if }K\textnormal{ is real,}\\
        \frac{9}{20}&\textnormal{if }K\textnormal{ is imaginary.}
    \end{cases}\]
\end{definition}

These local factors allow us to, concisely describe the average size of corestriction Selmer groups.
\begin{theorem}\label{thm:Average Size of Cores Selmer FINAL for MQ}
    Let $K/\QQ$ be a multiquadratic extension, and for concision write
    \[A(K):=\lim_{X\to\infty}\frac{\sum_{\substack{E\in \Epsilon\\H'(E)<X}}\left(\#\sel{\CCC(K)}(\QQ, E[2])-1\right)}{\sum_{\substack{E\in \Epsilon\\H'(E)<X}}1}.\]
    Then we have inequalities
    \[4\prod_{v\in\places_\QQ} L_v(\CCC(K))\leq A(K)\leq 4\prod_{\substack{v\in\places_\QQ\\v\nmid 6}} L_v(\CCC(K)).\]
    In particular, if $2$ and $3$ are totally split in $K/\QQ$ we have an equality
    \[A(K)=4\prod_{v\in\places_\QQ} L_v(\CCC(K)).\]
\end{theorem}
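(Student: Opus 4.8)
The plan is to specialise \Cref{cor:large family cores selmer avg} to the family $\cF=\Epsilon$ of all elliptic curves and then to evaluate or bound the resulting local masses place by place. First I would record that $\Epsilon$ is a large family in the sense of \Cref{def:large family}: its defining congruence conditions are the minimality conditions at each prime, and for every prime $p$ the set $\Inv_p(\Epsilon)$ contains all $(I,J)$ with $p^2\nmid\Delta'(I,J)$. Abbreviating $\MMM_\infty:=\lim_{X\to\infty}\MMM^{\CCC(K)}_\infty(\Epsilon;X)$ and $\MMM_p:=\MMM^{\CCC(K)}_p(\Epsilon)$, \Cref{cor:large family cores selmer avg} yields
\[A(K)=2\,\MMM_\infty\prod_p\MMM_p,\]
so the theorem reduces to the local assertions $\MMM_\infty=L_\infty(\CCC(K))$ and $\MMM_p=L_p(\CCC(K))$ for all $p\geq5$, together with the coarse bounds $2L_2(\CCC(K))\le\MMM_2\le2$ and $L_3(\CCC(K))\le\MMM_3\le1$.

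At the archimedean and split places the masses are immediate from \Cref{lem:CoresSelBundle is SelBundle}, which identifies the local group with the norm image $N_{K_w/\QQ_v}E(K_w)/2E(\QQ_v)$. When $v$ splits in $K$ this is the full Kummer image, of relative size $\abs{2}_v^{-1}$, so $\MMM_v=\abs{2}_v^{-1}$; this equals $1$ at odd split primes but equals $2$ at $v=2$, and since $\prod_p\abs{2}_p^{-1}=2$ it is precisely this place that supplies the global factor turning the prefactor $2$ into the asserted $4$. For real $K$ the infinite place splits, so $\MMM_\infty=\abs{2}_\infty^{-1}=\tfrac12=L_\infty$. For imaginary $K$ one has $N_{\CC/\RR}E(\CC)=E(\RR)^0=2E(\RR)$, whence $\#\CCC(K)(E)_\infty=1$ and the ratio $\#\CCC(K)(E)_\infty/\#E(\RR)[2]$ is $\tfrac12$ when $\Delta'<0$ and $\tfrac14$ when $\Delta'>0$; weighting by the limiting area proportions $\tfrac45$ and $\tfrac15$ of the loci $\Delta'<0$ and $\Delta'>0$ inside the region $H(I,J)<X$ gives $\MMM_\infty=\tfrac45\cdot\tfrac12+\tfrac15\cdot\tfrac14=\tfrac9{20}=L_\infty$.

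The heart of the argument is the exact evaluation $\MMM_p=L_p(\CCC(K))$ for $p\geq5$, which I would carry out in two stages. First, using \Cref{prop:Multiquadratic extn desc of CandF with Twisted Kummers} to present the local group as $\bigcap_{\theta\in S}\SSS_p^{(\theta)}(\QQ;E)=N_{K_w/\QQ_p}E(K_w)/2E(\QQ_p)$, I would compute its order as a function of the reduction type of $E^{I,J}$ at $p$ and the local splitting type of $K$ (ramified quadratic, unramified quadratic, biquadratic, or split) by means of the standard local norm-index formulae of Mazur and Kramer; \Cref{for good I0I1 unram have cores is sel} already disposes of the dominant $I_0$ and $I_1$ strata whenever $K/\QQ$ is unramified at $p$. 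Second, I would integrate $\#\CCC(K)(E^{I,J})_p/\#E^{I,J}(\QQ_p)[2]$ against $dI\,dJ$ over $\Inv_p(\Epsilon)$, stratifying by the valuations that control the Kodaira type, so that the bad-reduction strata contribute geometric series in $p^{-1}$ whose summation produces the rational functions of $p$ recorded in \Cref{def:L_v}. This combination of the norm-index bookkeeping across all reduction types and the ensuing $p$-adic volume computation is the main obstacle, both for the number of cases and for the care required to recover the exact numerators of \Cref{def:L_v}.

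Finally, at $p\in\{2,3\}$ I would establish only the coarse bounds. The upper bounds are the trivial observation that the corestriction condition can only shrink the Kummer image, so $\#\CCC(K)(E)_p\le\#\SSS_p(\QQ;E)$ and hence $\MMM_p\le\abs{2}_p^{-1}$, giving $\MMM_2\le2$ and $\MMM_3\le1$; the lower bounds $\MMM_2\ge2L_2(\CCC(K))$ and $\MMM_3\ge L_3(\CCC(K))$ follow from a crude uniform lower bound on the norm index over the good-reduction locus, the constants of \Cref{def:L_v} being defined precisely to record such a bound. Substituting these into
\[A(K)=2\,L_\infty\,\MMM_2\,\MMM_3\prod_{p\geq5}L_p\]
yields the two displayed inequalities. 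When $2$ and $3$ split in $K$ one has $L_2=L_3=1$, the coarse bounds become the equalities $\MMM_2=2$ and $\MMM_3=1$, the upper and lower estimates coincide, and the stated equality $A(K)=4\prod_{v}L_v(\CCC(K))$ follows.
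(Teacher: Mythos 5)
Your proposal follows essentially the same route as the paper: specialise \Cref{cor:large family cores selmer avg} to $\Epsilon$, evaluate $\MMM^{\CCC(K)}_\infty$ and $\MMM^{\CCC(K)}_p$ for $p\geq 5$ exactly (the paper outsources the $p$-adic norm-index densities to \cite{PatMQG}*{Proposition 5.7} rather than re-deriving them from Tate's algorithm, but that is the same computation), and bound the places $2$ and $3$ coarsely between $\abs{2}_p^{-1}L_p(\CCC(K))$ and $\abs{2}_p^{-1}$. One small correction: for the lower bounds at $p\in\{2,3\}$ the uniform bound on the norm index must hold for \emph{every} $E/\QQ_p$ (as in \cite{Paterson2021}*{Lemma 5.3}), not merely over the good-reduction locus as you write --- restricting the pointwise bound to good reduction and discarding the rest would only give $\MMM_p\geq \abs{2}_p^{-1}L_p(\CCC(K))$ times the density of the good-reduction stratum, which falls short of the stated inequality.
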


\subsection{The \texorpdfstring{$p$}{p}-adic Masses}
We begin by computing each of the $p$-adic masses in \Cref{cor:large family cores selmer avg} for the family of all elliptic curves.
\begin{notation}\label{not:all EC}
    For each prime number $p$, let
    \[\Epsilon_p:=\set{(A,B)\in\ZZ_p^2~:~\substack{\bullet 4A^3+27B^2\neq0\\\bullet (A,B)\not\in p^4\ZZ_p\times p^6\ZZ_p}}.\]
    Moreover, for $(A,B)\in\Epsilon_p$ we define $\Delta(A,B):=4A^3+27B^2$.
\end{notation}
\begin{lemma}\label{lem:MQ ext local mass as nmidx integral}
    Let $K/\QQ$ be a multiquadratic extension.  For every prime number $p$,
    \[\MMM_p^{\CCC(K)}(\Epsilon)=\abs{\frac{1}{2}}_p\frac{\int_{(A,B)\in\Epsilon_p}\#\left(\frac{E_{A,B}(\QQ_p)}{N_{K_w/\QQ_p}E_{A,B}(K_w)+2E_{A,B}(\QQ_p)}\right)^{-1}dAdB}{1-p^{-10}},\]
    where $w\in\places_K$ is a place extending the place at $p$.
\end{lemma}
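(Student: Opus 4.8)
The plan is to start from the formula for $\MMM_p^{\CCC(K)}(\cF)$ given in \Cref{cor:large family cores selmer avg} specialised to $\cF=\Epsilon$, and to massage both the integrand and the domain of integration until they take the stated shape. First I would rewrite the integrand using \Cref{lem:CoresSelBundle is SelBundle}, which identifies the local factor $\CCC(K)(E^{I,J})_p$ with $(N+2E^{I,J}(\QQ_p))/2E^{I,J}(\QQ_p)$, where I abbreviate $N:=N_{K_w/\QQ_p}E^{I,J}(K_w)$. Applying the tautological short exact sequence
\[0\to \frac{N+2E^{I,J}(\QQ_p)}{2E^{I,J}(\QQ_p)}\to \frac{E^{I,J}(\QQ_p)}{2E^{I,J}(\QQ_p)}\to \frac{E^{I,J}(\QQ_p)}{N+2E^{I,J}(\QQ_p)}\to 0\]
then yields
\[\#\CCC(K)(E^{I,J})_p=\#\braces{\frac{E^{I,J}(\QQ_p)}{2E^{I,J}(\QQ_p)}}\cdot\#\braces{\frac{E^{I,J}(\QQ_p)}{N+2E^{I,J}(\QQ_p)}}^{-1}.\]

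The next step is to clear the torsion term $\#E^{I,J}(\QQ_p)[2]$ in the denominator of the integrand. For this I would invoke the standard local index formula $\#(E(\QQ_p)/2E(\QQ_p))=\abs{\tfrac{1}{2}}_p\cdot\#E(\QQ_p)[2]$, valid for any elliptic curve over $\QQ_p$ as a consequence of the structure $E(\QQ_p)\cong\ZZ_p\times(\textnormal{finite})$. Combining this with the previous display gives
\[\frac{\#\CCC(K)(E^{I,J})_p}{\#E^{I,J}(\QQ_p)[2]}=\abs{\tfrac{1}{2}}_p\cdot\#\braces{\frac{E^{I,J}(\QQ_p)}{N+2E^{I,J}(\QQ_p)}}^{-1},\]
which is already the precise integrand appearing on the right-hand side of the claim (with $N$ rewritten in the $E_{A,B}$ notation after the change of variables below).

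It then remains to convert the integrals over $\Inv_p(\Epsilon)$ in the $(I,J)$-coordinates into integrals over $\Epsilon_p$ in the $(A,B)$-coordinates. Here I would use $I=-3A$ and $J=-27B$ from \Cref{def:I(E) and J(E)}, under which one checks directly that $E^{-3A,-27B}=E_{A,B}$, so the substitution has $p$-adic Jacobian $\abs{81}_p=\abs{3}_p^4$. The crucial bookkeeping is the identification of domains: the $12$th-power-free hypothesis cutting out $\Epsilon$ localises at $p$ to exactly the condition $(A,B)\notin p^4\ZZ_p\times p^6\ZZ_p$ defining $\Epsilon_p$ in \Cref{not:all EC}, and imposes no $p$-adic constraint coming from the conditions at other primes; hence, up to the measure-zero locus $\Delta'=0$, the closure $\Inv_p(\Epsilon)$ of \Cref{not:invariants for large family} is the image of $\Epsilon_p$ under $(A,B)\mapsto(-3A,-27B)$. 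Performing the change of variables in numerator and denominator of $\MMM_p^{\CCC(K)}(\Epsilon)$ simultaneously, the Jacobian $\abs{3}_p^4$ cancels, while the denominator becomes the Haar measure of $\Epsilon_p$ in $\ZZ_p^2$, namely $1-p^{-10}$ (the measure of the complement of $p^4\ZZ_p\times p^6\ZZ_p$). Assembling these steps produces the claimed identity.

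I expect the \emph{main obstacle} to be the careful matching of $\Inv_p(\Epsilon)$ with the image of $\Epsilon_p$: one must verify that the global $12$th-power-free condition localises correctly at $p$, that passing to the $p$-adic closure does not alter the relevant set away from measure zero, and that the nontrivial Jacobian at $p=3$ is dealt with purely by cancellation between numerator and denominator rather than by any explicit evaluation. By contrast, the algebraic simplification of the integrand (the exact sequence together with the local index formula) is routine once it is set up.
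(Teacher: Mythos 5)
Your proposal is correct and follows essentially the same route as the paper's proof: the same identification of the integrand via the index formula $\#(E(\QQ_p)/2E(\QQ_p))=\abs{\tfrac12}_p\#E(\QQ_p)[2]$ together with the quotient $E(\QQ_p)/(N_{K_w/\QQ_p}E(K_w)+2E(\QQ_p))$, and the same change of variables $(I,J)=(-3A,-27B)$ identifying $\Inv_p(\Epsilon)$ with $\Epsilon_p$, with the Jacobian $\abs{81}_p$ cancelling between numerator and denominator (the paper only needs to remark on this at $p=3$, where the factor is nontrivial). The only cosmetic difference is that you package the integrand computation as a short exact sequence rather than a direct index identity.
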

\begin{proof}
    We begin by analysing the integrand in the numerator of
    \[\MMM^{\CCC(K)}_p(\cF):=\frac{\int_{\substack{(I,J)\in \Inv_p(\cF)}}\frac{\#\CCC(K)(E^{I,J})_p}{\#E^{I,J}(\QQ_p)[2]}dIdJ}{\int_{\substack{(I,J)\in \Inv_p(\cF)}}dIdJ}.\]
    
    Let $E=E^{I,J}$, then since $E(\QQ_p)$ has a finite index subgroup isomorphic to the additive group $\ZZ_p$ (see e.g. \cite{silverman2009arithmetic}*{VII Prop. 6.3}), we have
    \[\frac{\#E(\QQ_p)/2E(\QQ_p)}{\#E(\QQ_p)[2]}=\frac{\#\ZZ_p/2\ZZ_p}{\#\ZZ_p[2]}=\abs{\frac{1}{2}}_p.\]
    Moreover, writing $w$ for a place of $K$ extending $p$,
    \[\#\CCC(K)(E)_p=\#(N_{K_w/\QQ_p}E(K_w)/2E(\QQ_p))=\frac{\#E(\QQ_p)/2E(\QQ_p)}{\#E(\QQ_p)/\braces{N_{K_w/\QQ_p}E(K_w)+2E(\QQ_p)}}.\]
    Therefore, using \Cref{lem:CoresSelBundle is SelBundle}, by definition the numerator of $\MMM_p^{\CCC(K)}(\Epsilon)$ is the integral (as $E=E^{I,J}$ varies) of the expression
    \[\frac{\#\CCC(K)(E)_p}{\#E(\QQ_p)[2]}=\abs{\frac{1}{2}}_p\frac{1}{\#E(\QQ_p)/\braces{N_{K_w/\QQ_p}E(K_w)+2E(\QQ_p)}}\]
    Also, for every prime number $p\neq 3$ we have
    \[\Inv_p(\Epsilon)=\set{(-3A,-27B)\in\ZZ_p^{2}~:~v_p(A)<4\textnormal{ or }v_p(B)<6}=\Epsilon_p,\]
    Therefore, by the above we have (for $p\neq 3$)
    \[\MMM_p^{\CCC(K)}(\Epsilon)=\abs{\frac{1}{2}}_p\frac{\int_{(A,B)\in\Epsilon_p}\#\left(\frac{E_{A,B}(\QQ_p)}{N_{K_w/\QQ_p}E_{A,B}(K_w)+2E_{A,B}(\QQ_p)}\right)^{-1}dAdB}{1-p^{-10}},\]
    as required.  When $p=3$, the change of variables $(I,J)\mapsto (A,B)$ contributes a factor of $\abs{81}_3$ to both the numerator and denominator, so that the above expression holds for $p=3$ also.
\end{proof}
It remains to compute the integral in the numerator of $\MMM_p^{\CCC(K)}(\Epsilon)$.  We recall some useful local density calculations.

\begin{table}
    \begin{tabular}{|c|c|c|c|}
    \hline
    &\multicolumn{3}{c|}{$F$}\\
    \cline{2-4}
    $\mu_p(Z_{F}(i))$&\multicolumn{2}{c|}{Quadratic}&Biquadratic\\
    \cline{2-3}
    &Unramified&Ramified&\\
    \hline
    $i=1$&$\frac{(p-1)(p^7+p^6+p^5+p^3+p+1)}{p^9(p+1)}$&$\frac{(p-1)(p^5+1)(p^3+p^2+1)}{2p^9}$&$\frac{(p^5+1)(p-1)(p^2+3p+1)}{2p^7(p+1)}$\\
    $i=2$&$\frac{(p-1)\braces{p^2-p+1}}{6p^7(p+1)}$&$\frac{(p-1)(p^5+1)(2p^2-2p-1)}{12p^7(p+1)}$&$\frac{(p^5+1)(p-1)\braces{p^4-p^3+p^2+6p+6}}{6p^9(p+1)}$\\
    $i>2$&$0$&$0$&$0$\\
    \hline
    \end{tabular}
    \caption{For $p\geq 5$ and multiquadratic $F/\QQ_p$, the $p$-adic densities of the sets where $\phi_F=i$}\label{tab:MQDENS}
\end{table}

\begin{proposition}[\cite{PatMQG}*{Proposition 5.7}]\label{prop:local densities with const norm index}
    Let $p\geq 5$ be a prime number and $F/\QQ_p$ be a multiquadratic extension, and for brevity for each $i\geq 0$ write
    \[Z_F(i):=\set{(A,B)\in\Epsilon_p~:~\dim_{\FF_2}\frac{E_{A,B}(\QQ_p)}{N_{F/\QQ_p}E_{A,B}(F)+2E_{A,B}(\QQ_p)}=i}.\]
    Then the $p$-adic densities $\mu_p(Z_F(i))$ are given by \Cref{tab:MQDENS}.
\end{proposition}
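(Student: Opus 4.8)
The plan is to compute each density purely locally at $p$, reducing the dimension in the definition of $Z_F(i)$ to a codimension of intersecting maximal isotropic subspaces and then stratifying $\Epsilon_p$ by reduction type. First I would fix a place $w\mid p$ of $K$ and write $F=K_w$, with $S=\ker(\QQ_p^\times/\QQ_p^{\times2}\to F^\times/F^{\times2})$ the subgroup cutting out $F$. For $E=E_{A,B}$, dividing numerator and denominator by $2E(\QQ_p)$ identifies the quotient defining $Z_F(i)$ with $(E(\QQ_p)/2E(\QQ_p))/\CCC_p(F/\QQ_p;E)$, so that $i=d-\dim_{\FF_2}\CCC_p(F/\QQ_p;E)$ where $d:=\dim_{\FF_2}E(\QQ_p)/2E(\QQ_p)$. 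Since $p\geq5$ gives $|2|_p=1$ and hence $\#E(\QQ_p)/2E(\QQ_p)=\#E(\QQ_p)[2]$, we have $d\leq2$; as $\CCC_p\subseteq E(\QQ_p)/2E(\QQ_p)$ this already forces $i\leq2$ and so $\mu_p(Z_F(i))=0$ for all $i>2$, giving the last row of \Cref{tab:MQDENS} for free.

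The next step is to express $\dim_{\FF_2}\CCC_p$ through the description $\CCC_p(F/\QQ_p;E)=\bigcap_{\theta\in S}\SSS_p^{(\theta)}(\QQ;E)$ of \Cref{prop:Multiquadratic extn desc of CandF with Twisted Kummers}, turning the problem into computing the dimension of an intersection of the maximal isotropic subspaces $\SSS_p^{(\theta)}$ (maximal isotropic by \Cref{lem:twisted Kummer image is selfdual}) inside the quadratic space $H^1(\QQ_p,E[2])$. For quadratic $F$ this is the two-fold intersection $\SSS_p\cap\SSS_p^{(\theta)}$, whose codimension is Kramer's local norm index \cite{kramer1981arithmetic}; I would reprove the needed values using that the Herbrand quotient of $E(F)$ over the cyclic extension $F/\QQ_p$ is trivial (the formal group is commensurable with $\mathcal{O}_F$, of Herbrand quotient $1$), which reduces the norm defect to $\#H^1(\gal(F/\QQ_p),E(F))$ and hence to the action of $\gal(F/\QQ_p)$ on $E[2]$ and on the component group of the N\'eron model. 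For biquadratic $F$ the relevant object is the four-fold intersection $\SSS_p\cap\SSS_p^{(\theta_1)}\cap\SSS_p^{(\theta_2)}\cap\SSS_p^{(\theta_1\theta_2)}$, which I would evaluate by combining the three quadratic answers inside the same quadratic space. The outcome of this step is a finite look-up table giving $i$ as a function of the Kodaira type of $E/\QQ_p$, of $\dim_{\FF_2}E(\QQ_p)[2]$ (equivalently the splitting type of $x^3+Ax+B$ over $\QQ_p$), and of whether $F/\QQ_p$ is unramified quadratic, ramified quadratic, or biquadratic.

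With the value of $i$ pinned down on each stratum, the remaining work is measure-theoretic: to integrate the indicators of $\{i=1\}$ and of $\{i=2\}$ against Haar measure on $\Epsilon_p$. I would stratify $\Epsilon_p$ by reduction type via Tate's algorithm, which for $p\geq5$ is tame and depends only on $v_p(\Delta)$ together with $v_p(A)$: good reduction ($p\nmid\Delta$), multiplicative reduction $I_n$ ($p\mid\Delta$, $p\nmid A$, $n=v_p(\Delta)$), and the additive types ($p\mid A$). The good, unramified case contributes $i=0$ by \Cref{for good I0I1 unram have cores is sel}, so every nonzero value of $i$ comes from bad reduction or from ramified/biquadratic $F$; accordingly the densities in the table are $O(1/p^2)$ for $i=1$ and $O(1/p^5)$ for $i=2$. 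The multiplicative strata require summing the $I_n$ contributions over $n\geq1$ as a geometric series in $p^{-1}$ (weighted by how the norm index depends on $n$ and on $F$), while the additive strata contribute the lower-order terms and are handled by a finite measure computation for each tame additive type. Assembling these, and simplifying separately in the three columns for $F$, should reproduce the rational functions of $p$ in \Cref{tab:MQDENS}, with $\mu_p(Z_F(0))$ recovered as the complementary density.

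I expect the main obstacle to be the second step in the case of additive reduction combined with ramified or biquadratic $F$: there the norm index is governed by the interaction of the ramification of $F$ with the component group of the N\'eron model and with the $\gal(F/\QQ_p)$-action on $E[2]$, and in the biquadratic case one must control the four-fold intersection uniformly across the additive types rather than merely superimposing the quadratic answers. A secondary difficulty is pure bookkeeping: correctly computing the Haar measure of each tame additive stratum and summing the $I_n$ series so that the many terms collapse to the stated rational functions, which is where most of the length, and the risk of arithmetic slips, resides.
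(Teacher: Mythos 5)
First, a point of comparison: the paper does not prove this proposition at all --- it is imported wholesale from \cite{PatMQG}*{Proposition 5.7}, and the remark following \Cref{thm:Average Size of Cores Selmer FINAL for MQ} indicates that the proof there runs essentially along your lines: local norm indices computed case-by-case through Tate's algorithm (hence the appendix table), followed by $p$-adic measure bookkeeping over the strata. Your opening reductions are also sound: since $p\geq 5$, $\#E(\QQ_p)/2E(\QQ_p)=\#E(\QQ_p)[2]\leq 4$ forces $i\leq 2$, and \Cref{prop:Multiquadratic extn desc of CandF with Twisted Kummers} together with \Cref{lem:twisted Kummer image is selfdual} correctly recasts the problem as intersections of maximal isotropic subspaces.

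There is, however, one concrete error that would derail the execution as written. Your claim that the densities are $O(1/p^2)$ for $i=1$ and $O(1/p^5)$ for $i=2$ is false in two of the three columns of \Cref{tab:MQDENS}: for ramified quadratic and for biquadratic $F$, the $i=1$ density tends to $\tfrac{1}{2}$ and the $i=2$ density to $\tfrac{1}{6}$ as $p\to\infty$. The source of the slip is that \Cref{for good I0I1 unram have cores is sel} requires $p$ \emph{unramified} in $F$; when $F/\QQ_p$ is ramified, a curve with good reduction generically has $i=\dim_{\FF_2}E_{A,B}(\QQ_p)[2]$ (the twist by a ramified class has additive reduction and its Kummer image generically meets $H^1_{\nr}(\QQ_p,E[2])$ trivially), and the limits $\tfrac{1}{2}$ and $\tfrac{1}{6}$ are precisely the probabilities that $x^3+Ax+B$ has exactly one, respectively three, roots modulo $p$. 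So in the ramified columns the dominant stratum is \emph{good} reduction, not the multiplicative and additive strata you single out as carrying the mass; a computation organized around your stated asymptotics would come out wrong at order $1$. A secondary caveat: in the biquadratic case the pairwise norm indices, i.e.\ the dimensions of $\SSS_p\cap\SSS_p^{(\theta)}$, do not determine the four-fold intersection --- you need the actual subgroups of $E(\QQ_p)/2E(\QQ_p)$ (or a direct computation of $N_{F/\QQ_p}E(F)$ through a tower $F/F_1/\QQ_p$), and note that your Herbrand-quotient argument uses cyclicity, so it applies only to the quadratic layers, not to $\gal(F/\QQ_p)\cong(\ZZ/2)^2$ directly. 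You flag this as the main obstacle, rightly, but ``combining the three quadratic answers'' must mean intersecting the three norm subgroups themselves, not superimposing their dimensions.
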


\begin{lemma}\label{lem:p-adic mass for cores and all EC}
    Let $K/\QQ$ be a multiquadratic extension, and $p\geq 5$ be a prime number.  Then
    \begin{align*}
        \frac{\int_{(A,B)\in\Epsilon_p}\#\left(\frac{E_{A,B}(\QQ_p)}{N_{K_w/\QQ_p}E_{A,B}(K_w)+2E_{A,B}(\QQ_p)}\right)^{-1}dAdB}{1-p^{-10}}
        &=L_p(\CCC(K)).
    \end{align*}
\end{lemma}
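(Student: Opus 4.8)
The plan is to recognise the integrand as a locally constant function whose value is governed by the invariant $i$ cutting out the sets $Z_{K_w}(i)$ of \Cref{prop:local densities with const norm index}, and then to integrate term by term. By \Cref{lem:CoresSelBundle is SelBundle} the group appearing in the integrand is exactly the quotient defining $Z_{K_w}(i)$; since $2E_{A,B}(\QQ_p)$ already sits inside the denominator, this quotient is an $\FF_2$-vector space, so its order is $2^{i}$ precisely when $(A,B)\in Z_{K_w}(i)$. Hence the integrand equals $2^{-i}$ on $Z_{K_w}(i)$, and the numerator integral is $\sum_{i\ge 0}2^{-i}\mu_p(Z_{K_w}(i))$, where $w$ is a fixed place of $K$ above $p$ and $\mu_p$ is the $p$-adic density normalised so that $\ZZ_p^2$ has measure $1$.

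Next I would compute $\mu_p(\Epsilon_p)$ directly from \Cref{not:all EC}: the discriminant locus $\set{4A^3+27B^2=0}$ is null, while the excluded box $p^4\ZZ_p\times p^6\ZZ_p$ has measure $p^{-4}\cdot p^{-6}=p^{-10}$, so $\mu_p(\Epsilon_p)=1-p^{-10}$. This is exactly the denominator in the statement, so the whole left-hand side is the $\mu_p$-average of $2^{-i}$ over $\Epsilon_p$. By \Cref{prop:local densities with const norm index} (that is, \Cref{tab:MQDENS}) we have $\mu_p(Z_{K_w}(i))=0$ for $i>2$, so only $i=0,1,2$ contribute; writing $\mu_p(Z_{K_w}(0))=(1-p^{-10})-\mu_p(Z_{K_w}(1))-\mu_p(Z_{K_w}(2))$ and collecting terms gives
\[
\frac{\int_{(A,B)\in\Epsilon_p}\#\braces{\tfrac{E_{A,B}(\QQ_p)}{N_{K_w/\QQ_p}E_{A,B}(K_w)+2E_{A,B}(\QQ_p)}}^{-1}dA\,dB}{1-p^{-10}}=1-\frac{\tfrac12\mu_p(Z_{K_w}(1))+\tfrac34\mu_p(Z_{K_w}(2))}{1-p^{-10}}.
\]

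It then remains to substitute the explicit densities and match against \Cref{def:L_v}. Since $p\ge 5$ is odd we have $\QQ_p^\times/\QQ_p^{\times2}\cong(\ZZ/2)^2$, so $K_w/\QQ_p$ is either trivial, unramified quadratic, ramified quadratic, or the unique biquadratic extension, which are precisely the four cases of \Cref{def:L_v}. In the split case the quotient is trivial, so every $(A,B)$ lies in $Z_{K_w}(0)$ and the average is $1=L_p(\CCC(K))$. In the remaining three cases I would read off $\mu_p(Z_{K_w}(1))$ and $\mu_p(Z_{K_w}(2))$ from the appropriate column of \Cref{tab:MQDENS}, clear denominators using $1-p^{-10}=(p^{10}-1)/p^{10}$, and simplify. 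The main obstacle is exactly this last simplification: it is a purely mechanical rational-function identity over $\QQ(p)$, but it is the only genuine computation, and some care is needed to recover the cyclotomic factors (for instance $p^4-p^3+p^2-p+1=\Phi_{10}(p)$ and $p^{10}-1=\prod_{d\mid 10}\Phi_d(p)$) appearing in the closed forms of \Cref{def:L_v}. Everything preceding it is structural and follows immediately from the cited results.
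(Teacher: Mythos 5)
Your proposal is correct and follows essentially the same route as the paper's proof: the integrand is identified as $2^{-i}$ on the sets $Z_{K_w}(i)$ of \Cref{prop:local densities with const norm index}, the numerator becomes $\sum_{i=0}^{2}2^{-i}\mu_p(Z_{K_w}(i))$, the density of $Z_{K_w}(0)$ is eliminated via complementarity, and the rest is a case-by-case rational-function verification against \Cref{def:L_v}. If anything, you are slightly more careful than the paper's own one-line argument in noting that $\mu_p(\Epsilon_p)=1-p^{-10}$, so that $\mu_p(Z_{K_w}(0))=(1-p^{-10})-\mu_p(Z_{K_w}(1))-\mu_p(Z_{K_w}(2))$ rather than $1-\mu_p(Z_{K_w}(1))-\mu_p(Z_{K_w}(2))$.
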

\begin{proof}
    In the language of \Cref{prop:local densities with const norm index}, we have
    \[\int_{(A,B)\in\Epsilon_p}\#\left(\frac{E_{A,B}(\QQ_p)}{N_{K_w/\QQ_p}E_{A,B}(K_w)+2E_{A,B}(\QQ_p)}\right)^{-1}dAdB=\sum_{i=0}^{2}2^{-i}\mu_p(Z_{K_w}(i)),\]
    Since $\mu_p(Z_{K_w}(0))=1-\mu_p(Z_{K_w}(1))-\mu_p(Z_{K_w}(2))$, the result is a calculation in each case.
\end{proof}

\subsection{The Archimedean Contribution}
We now compute the archimedean factor in \Cref{cor:large family cores selmer avg} for the family $\Epsilon$.

\begin{lemma}\label{lem:arch C(K)(E)/E(R)[2]}
    Let $K/\QQ$ be a multiquadratic field, and $(I,J)\in\RR^2$ be elements such that $\Delta'(I,J)\neq 0$.  Then
    \[\frac{\#\CCC(K)(E^{I,J})_\infty}{\#E^{I,J}(\RR)[2]}=\begin{cases}
        \frac{1}{4}&\textnormal{if }K\textnormal{ is imaginary and }\Delta'(I,J)>0\\
        \frac{1}{2}&\textnormal{else}
    \end{cases}\]
\end{lemma}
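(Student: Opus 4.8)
The plan is to unwind the definition of the local group $\CCC(K)(E^{I,J})_\infty$ via \Cref{lem:CoresSelBundle is SelBundle}, which identifies it with the archimedean corestriction Selmer condition of \Cref{def:cores selmer}. Fixing a place $w$ of $K$ above $\infty$ and writing $E=E^{I,J}$, this reads
\[\CCC(K)(E)_\infty=\frac{N_{K_w/\RR}E(K_w)+2E(\RR)}{2E(\RR)}.\]
Since $K/\QQ$ is multiquadratic, hence Galois, all archimedean places of $K$ behave identically, so $K_w=\RR$ exactly when $K$ is real and $K_w=\CC$ exactly when $K$ is imaginary (in particular the choice of $w$ is immaterial). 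The computation therefore splits into these two cases. Throughout I would invoke the standard description of $E(\RR)$ as a real Lie group: $E(\RR)\cong\RR/\ZZ$ when $\Delta'(I,J)<0$ and $E(\RR)\cong\RR/\ZZ\times\ZZ/2\ZZ$ when $\Delta'(I,J)>0$, so that $\#E(\RR)[2]=2$ (resp. $4$), that $\#E(\RR)/2E(\RR)=\tfrac12\#E(\RR)[2]$, and—crucially—that $2E(\RR)=E(\RR)^0$, the connected component of the identity, in both cases.

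In the real case $K_w=\RR$ the norm map is the identity, so $N_{K_w/\RR}E(K_w)=E(\RR)$ and the local group is simply $E(\RR)/2E(\RR)$. Dividing by $\#E(\RR)[2]$ and using $\#E(\RR)/2E(\RR)=\tfrac12\#E(\RR)[2]$ gives the ratio $\tfrac12$, independently of the sign of $\Delta'(I,J)$, which is the ``else'' branch of the statement.

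The main content is the imaginary case $K_w=\CC$, where the decisive step is to show that the image of the norm map $N_{\CC/\RR}\colon E(\CC)\to E(\RR)$, namely $P\mapsto P+\sigma(P)$ for $\sigma$ complex conjugation, equals $E(\RR)^0=2E(\RR)$. I would argue both inclusions: the map is a continuous homomorphism from the connected group $E(\CC)$, so its image is connected and thus contained in $E(\RR)^0$; conversely, any $P\in E(\RR)^0$ is divisible within the circle $E(\RR)^0$, so $P=2Q$ with $Q\in E(\RR)=E(\CC)^\sigma$, whence $N_{\CC/\RR}(Q)=Q+\sigma(Q)=2Q=P$. Consequently $N_{K_w/\RR}E(K_w)=2E(\RR)$, the local group $\CCC(K)(E)_\infty$ is trivial, and the ratio becomes $1/\#E(\RR)[2]$, which is $\tfrac14$ when $\Delta'(I,J)>0$ and $\tfrac12$ when $\Delta'(I,J)<0$. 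Assembling the two cases yields exactly the stated formula.

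I expect the only genuine obstacle to be the identification $N_{\CC/\RR}E(\CC)=E(\RR)^0$; once that image computation is in hand, everything else is routine bookkeeping with the Lie-group structure of $E(\RR)$ and the elementary fact $\#E(\RR)/2E(\RR)=\tfrac12\#E(\RR)[2]$.
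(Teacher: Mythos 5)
Your proof is correct and follows essentially the same route as the paper: identify $\CCC(K)(E^{I,J})_\infty$ with the image of the local norm via \Cref{lem:CoresSelBundle is SelBundle}, dispose of the real case using $\#E(\RR)/2E(\RR)=\tfrac12\#E(\RR)[2]$, and reduce the imaginary case to the identity $N_{\CC/\RR}E(\CC)=2E(\RR)$. The only difference is that you supply the connectedness/divisibility argument for that identity (and for the real-case index computation), which the paper merely asserts or handles by citation.
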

\begin{proof}
    Let $w\in\places_K$ be an archimedean place, and denote $E:=E^{I,J}$.  Then note that by \Cref{lem:CoresSelBundle is SelBundle}
    \[\CCC(K)(E)_\infty=N_{K_w/\RR}E(K_w)/2E(\RR).\]
    The case that $K$ is real (so $K_w=\RR$) is then clear (see e.g. \cite{MR457453}*{Prop 3.7}).

    If, on the other hand, $K$ is imaginary then noting that $N_{\CC/\RR}E(\CC)=2E(\RR)$ we have
    \[\frac{\#(N_{\CC/\RR}E(\CC)/2E(\RR))}{\#E(\RR)[2]}=\begin{cases}
        \frac{1}{2}&\textnormal{if }\Delta'(I,J)<0\\
        \frac{1}{4}&\textnormal{if }\Delta'(I,J)>0,
    \end{cases}\]
    as required, since $\Delta'(I,J)$ is the discriminant of the elliptic curve $E^{I,J}$.
\end{proof}

\begin{lemma}\label{lem:arch contrib for cores selmer avg}
    Let $K/\QQ$ be a multiquadratic extension.  Then we have an equality
    \[\MMM^{\CCC(K)}_\infty(\Epsilon;X)=\begin{cases}
        \frac{1}{2}&\textnormal{if }K\textnormal{ is real,}\\
        \frac{9}{20}&\textnormal{if }K\textnormal{ is imaginary.}
    \end{cases}\]

\end{lemma}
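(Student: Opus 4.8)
The plan is to feed the pointwise formula from \Cref{lem:arch C(K)(E)/E(R)[2]} into the definition of the archimedean mass and reduce everything to an elementary volume ratio. First I would record that for the family of all elliptic curves there is no archimedean congruence condition, so $\Inv_\infty(\Epsilon)=\set{(I,J)\in\RR^2~:~\Delta'(I,J)\neq 0}$ (the full nonzero-discriminant locus). Since the discriminant-zero locus has measure zero, I may split the region $\set{H(I,J)<X}$ according to the sign of $\Delta'(I,J)=(4I^3-J^2)/27$, and write $V_+:=\Vol\set{H(I,J)<X,\ \Delta'(I,J)>0}$, $V_-:=\Vol\set{H(I,J)<X,\ \Delta'(I,J)<0}$ and $V:=V_++V_-=\Vol\set{H(I,J)<X}$.

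In the case that $K$ is real, \Cref{lem:arch C(K)(E)/E(R)[2]} makes the integrand the constant $\tfrac12$, so the mass is immediately $\tfrac12$. When $K$ is imaginary the integrand is $\tfrac14$ on the $\Delta'>0$ part and $\tfrac12$ elsewhere, giving
\[
\MMM^{\CCC(K)}_\infty(\Epsilon;X)=\frac{\tfrac14 V_+ + \tfrac12 V_-}{V}=\frac12-\frac14\cdot\frac{V_+}{V}.
\]
Thus the whole computation collapses to determining the ratio $V_+/V$, which I claim is $\tfrac15$, yielding $\tfrac12-\tfrac14\cdot\tfrac15=\tfrac{9}{20}$. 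Crucially, since both $V_+$ and $V$ are homogeneous of the same degree in $X$, this ratio is independent of $X$ and the resulting mass is an exact constant, with no error term.

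To compute the ratio I would integrate directly. Writing $H(I,J)=\max\set{|I|^3,\,J^2/4}$, the region $\set{H<X}$ is the rectangle $|I|<X^{1/3}$, $|J|<2X^{1/2}$, of volume $V=8X^{5/6}$. For the subregion $\Delta'>0$, i.e. $4I^3>J^2$, one needs $I>0$ and $|J|<2I^{3/2}$; the only point requiring care is to check that this already forces $|J|<2X^{1/2}$ once $I<X^{1/3}$ (it does, since $2I^{3/2}\leq 2X^{1/2}$ there), so the sole binding height constraint on this piece is $I<X^{1/3}$. Then
\[
V_+=\int_0^{X^{1/3}}\!\!4I^{3/2}\,dI=\frac85\,X^{5/6},
\]
whence $V_+/V=\tfrac15$ and the imaginary case gives $\tfrac{9}{20}$, as claimed. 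There is no genuine obstacle here; the only substantive point is recognising that \Cref{lem:arch C(K)(E)/E(R)[2]} reduces the problem to a single sign-of-discriminant volume ratio, after which the computation is a one-line integral.
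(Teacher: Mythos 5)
Your proposal is correct and follows essentially the same route as the paper: apply \Cref{lem:arch C(K)(E)/E(R)[2]} to make the integrand constant ($\tfrac12$ for $K$ real) or piecewise constant according to the sign of $\Delta'(I,J)$ ($K$ imaginary), then reduce to the volume ratio $V_+/V=\tfrac15$ and conclude $\tfrac12-\tfrac14\cdot\tfrac15=\tfrac{9}{20}$. The only difference is that the paper dismisses the final volume computation as ``elementary calculus,'' whereas you carry it out explicitly (correctly, including the observation that $|J|<2I^{3/2}$ already forces $|J|<2X^{1/2}$ on the $\Delta'>0$ piece).
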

\begin{proof}
    If $K$ is real then this is immediate from the definition of $\MMM^{\CCC(K)}_\infty(\Epsilon;X)$ and \Cref{lem:arch C(K)(E)/E(R)[2]}.  If $K$ is imaginary then by \Cref{lem:arch C(K)(E)/E(R)[2]} we have
    \begin{align*}
        \MMM^{\CCC(K)}_\infty(\cF;X)
        &=\frac{\int_{\substack{(I,J)\in \RR^2\\\Delta'(I,J)\neq 0\\H(I,J)<X}}\frac{\#\CCC(K)(E^{I,J})_\infty}{\#E^{I,J}(\RR)[2]}dIdJ}{\int_{\substack{(I,J)\in \RR^2\\\Delta'(I,J)\neq 0\\H(I,J)<X}}dIdJ}
        \\&=\frac{1}{2}-\frac{\int_{\substack{(I,J)\in \RR^2\\H(I,J)<X\\\Delta'(I,J)>0}}dIdJ}{4\int_{\substack{(I,J)\in \RR^2\\\Delta'(I,J)\neq 0\\H(I,J)<X}}dIdJ}
        \\&=\frac{9}{20}.
    \end{align*}
    where the final equality is elementary calculus.
\end{proof}

We now combine the results above to prove \Cref{thm:Average Size of Cores Selmer FINAL for MQ}.

\begin{proof}[Proof of \Cref{thm:Average Size of Cores Selmer FINAL for MQ}]
    By \Cref{cor:large family cores selmer avg}, computing all of the masses except those at $2$ and $3$ using \Cref{lem:p-adic mass for cores and all EC}, and \ref{lem:arch contrib for cores selmer avg}, we have
    \begin{equation}\label{eq:A(K) in cores final thm}
    A(K)=\braces{\prod_{p\in\set{2,3}}\MMM_p^{\CCC(K)}(\Epsilon)}2L_\infty(\CCC(K))\prod_{\substack{p\geq 5\\\textnormal{prime}}} L_p(\CCC(K)).
    \end{equation}
    For $p\in\set{2,3}$, using \cite{Paterson2021}*{Lemma 5.3} and the fact that if $K/\QQ$ is totally split at $p$ then the local norm is the identity map, we know that for every elliptic curve $E/\QQ_p$
    \[\frac{1}{\#\braces{E(\QQ_p)/(N_{K_w/\QQ_p}E(K_w)+2E(\QQ_p))}}\leq 1.\]
    Combining these bounds with \Cref{lem:MQ ext local mass as nmidx integral} we obtain
    \[\abs{\frac{1}{2}}_p\cdot L_p(\CCC(K))\leq \MMM_p^{\CCC(K)}(\Epsilon)\leq \abs{\frac{1}{2}}_p.\]
    Combining with the identity \Cref{eq:A(K) in cores final thm} we obtain the claimed result.
\end{proof}

\begin{rem}
    It is clear that the `coarse' local factors $L_2$ and $L_3$ are approximations of the correct factors for $\MMM_2^{\CCC(K)}$ and $\MMM_3^{\CCC(K)}$.  Broadly, the main ingredient going into computing the local factors $L_p$ for $p\geq 5$ is the studious account of local norm indices at $p$ in \cite{PatMQG}, which itself relies on performing Tate's algorithm carefully.  Certainly, for $p\in\set{2,3}$ one could go through Tate's algorithm, compute the corresponding results, and then find the correct factors $L_2$ and $L_3$.
\end{rem}

We state, as corollary, the bounds that this result leaves us with for the average dimension of corestriction Selmer groups.
\begin{corollary}\label{cor:Average Size of Cores Selmer for nat ordering and MQ}
    Let $K/\QQ$ be a multiquadratic extension, then
    \[\limsup_{X\to\infty}\frac{\sum_{(A,B)\in\Epsilon(X)}\dim\sel{\CCC(K)}(\QQ, E[2])}{\#\Epsilon(X)}\leq 4\prod_{\substack{v\in\places_\QQ\\v\nmid6}}L_v(\CCC(K)).\]
\end{corollary}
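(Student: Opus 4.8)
The plan is to deduce this directly from \Cref{thm:Average Size of Cores Selmer FINAL for MQ}, using the elementary inequality relating the dimension and the cardinality of a finite $\FF_2$-vector space. First I would recall that $\sel{\CCC(K)}(\QQ, E[2])$ is a finite-dimensional vector space over $\FF_2$, so that if we set $d:=\dim_{\FF_2}\sel{\CCC(K)}(\QQ, E[2])$ then $\#\sel{\CCC(K)}(\QQ, E[2])=2^d$. Since $2^d\geq d+1$ for every integer $d\geq 0$ (with equality exactly when $d\in\set{0,1}$), this yields the pointwise bound
\[\dim_{\FF_2}\sel{\CCC(K)}(\QQ, E[2])\leq \#\sel{\CCC(K)}(\QQ, E[2])-1,\]
valid for every $E\in\Epsilon$.

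Summing this inequality over all $(A,B)\in\Epsilon(X)$ and dividing by $\#\Epsilon(X)$ gives
\[\frac{\sum_{(A,B)\in\Epsilon(X)}\dim\sel{\CCC(K)}(\QQ, E[2])}{\#\Epsilon(X)}\leq \frac{\sum_{(A,B)\in\Epsilon(X)}\left(\#\sel{\CCC(K)}(\QQ, E[2])-1\right)}{\#\Epsilon(X)}.\]
Taking $\limsup_{X\to\infty}$ of both sides, the right-hand side converges, by \Cref{thm:Average Size of Cores Selmer FINAL for MQ}, to $A(K)$, which is in turn bounded above by $4\prod_{v\nmid 6}L_v(\CCC(K))$, delivering the claim.

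There is one small bookkeeping point to settle: \Cref{thm:Average Size of Cores Selmer FINAL for MQ} is phrased with the height $H'$, whereas $\Epsilon(X)$ is defined via the naive height $H$. However, as noted in the remark following \Cref{def:naiveheightofEC}, these differ only by the constant factor $H'(E)=\tfrac{27}{4}H(E)$, so the two orderings coincide and the limit defining $A(K)$ is unaffected by this rescaling of the height parameter; with this identification the displayed inequality gives exactly the asserted bound. The argument is essentially immediate, and the only conceptual subtlety worth flagging is that the passage from average size to average dimension is \emph{one-directional} --- the inequality $\dim_{\FF_2}V\leq \#V-1$ does not reverse --- which is precisely why the statement records an upper bound for the $\limsup$ rather than an asymptotic equality.
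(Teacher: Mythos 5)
Your proposal is correct and is exactly the paper's argument: the paper proves this corollary by citing \Cref{thm:Average Size of Cores Selmer FINAL for MQ} together with the inequality $r\leq 2^r-1$, which is precisely your pointwise bound $\dim_{\FF_2}V\leq \#V-1$ applied termwise before averaging. The additional remark about the harmless rescaling between $H$ and $H'$ is a fair bookkeeping point that the paper leaves implicit.
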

\begin{proof}
    This follows from \Cref{thm:Average Size of Cores Selmer FINAL for MQ} via the inequality $r\leq 2^{r}-1$.
\end{proof}

\subsection{Proofs of Main Theorems}
We now prove the results from the introduction.
\begin{theorem}[\Cref{thm:INTRO average size of intersection is product of densities}]\label{thm:average size of intersection is product of densities}
    Let $S$ be a set of squarefree integers.  Then,
    \[\lim_{X\to\infty}\frac{\sum_{E\in\Epsilon(X)}\#\bigcap\limits_{D\in S}\sel{2}(E_D/\QQ)}{\#\Epsilon(X)}=1+2\prod_{v\in\places_\QQ}\delta_{S,v}>1,\]
    where the $\delta_{S,v}:=\MMM_{v}^{\CCC(K)}(\Epsilon;X)$ are local densities defined in \Cref{cor:large family cores selmer avg} depending on $S$.
\end{theorem}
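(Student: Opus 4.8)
The plan is to recognise the intersection on the left as a corestriction Selmer group and then feed this into the averaging result \Cref{cor:large family cores selmer avg} for the family $\cF=\Epsilon$ of all elliptic curves. First I set $K:=\QQ(\sqrt{D}:D\in S)$, the multiquadratic field generated by the square roots of the elements of $S$, and write $\tilde S:=\ker(\QQ^\times/\QQ^{\times2}\to K^\times/K^{\times2})$ for the associated kernel, which is exactly the subgroup of $\QQ^\times/\QQ^{\times2}$ generated by the classes of the $D\in S$. Viewing each $\sel{2}(E_D/\QQ)$ inside $H^1(\QQ,E[2])$ via $\varphi_D^*$, \Cref{prop:Multiquadratic extn desc of CandF with Twisted Kummers} gives
\[\sel{\CCC(K)}(\QQ,E[2])=\bigcap_{\theta\in\tilde S}\varphi_\theta^*\sel{2}(E_\theta/\QQ)\subseteq\bigcap_{D\in S}\varphi_D^*\sel{2}(E_D/\QQ),\]
the inclusion being immediate from $S\subseteq\tilde S$. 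The reverse inclusion is the one genuinely algebraic point of the argument: I must check that imposing the twisted Kummer conditions attached to the generators $D\in S$ already forces those attached to every product $\theta=D_1\cdots D_k\in\tilde S$. This reduces place by place to the identity $\bigcap_{D\in S}\SSS_v^{(D)}(\QQ;E)=\bigcap_{\theta\in\tilde S}\SSS_v^{(\theta)}(\QQ;E)$, which I would extract from the local description of $\CCC_v$ in \Cref{prop:local corestriction images are intersections of twisted Kummers} together with the self-duality of the twisted Kummer images in \Cref{lem:twisted Kummer image is selfdual}; equivalently one may simply take $S$ to be the subgroup it generates, since the intersection only depends on the field $K$ that $S$ cuts out. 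Granting this, $\bigcap_{D\in S}\sel{2}(E_D/\QQ)=\sel{\CCC(K)}(\QQ,E[2])$.

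Next I would observe that $\Epsilon$ is a large family in the sense of \Cref{def:large family} — it is the archetypal example, since for every prime $p$ one has $\Inv_p(\Epsilon)=\Epsilon_p$, which contains all pairs with $p^2\nmid\Delta$ — so \Cref{cor:large family cores selmer avg} applies verbatim with $\cF=\Epsilon$. Combining it with the identification above and recalling the definition $\delta_{S,v}:=\MMM_v^{\CCC(K)}(\Epsilon;X)$ gives
\[\frac{\sum_{E\in\Epsilon(X)}\bigl(\#\textstyle\bigcap_{D\in S}\sel{2}(E_D/\QQ)-1\bigr)}{\#\Epsilon(X)}=2\,\delta_{S,\infty}\prod_{p}\delta_{S,p}+o(1).\]
Here I use that ordering by $H'$ and by the na\"ive height $H$ agree up to a constant, so the average over $\Epsilon(X)$ is the one computed in the corollary, and that by \Cref{lem:arch contrib for cores selmer avg} the archimedean factor $\delta_{S,\infty}$ is the constant $\tfrac12$ or $\tfrac{9}{20}$ (independent of $X$). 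Letting $X\to\infty$ therefore yields the limit $1+2\prod_{v\in\places_\QQ}\delta_{S,v}$.

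It remains to prove the strict inequality, i.e.\ that $\prod_{v}\delta_{S,v}>0$. Each local factor is a ratio of integrals of a strictly positive integrand over a set of positive measure, hence $\delta_{S,v}>0$; the only real point is convergence of the infinite product to a nonzero value. For this I would use that at an unramified prime of the (Galois, elementary abelian) extension $K/\QQ$ the decomposition group is cyclic, so $K/\QQ$ is there either totally split or unramified quadratic. By \Cref{lem:p-adic mass for cores and all EC} and \Cref{def:L_v} the totally split primes contribute $\delta_{S,p}=1$, while the unramified quadratic primes contribute $\delta_{S,p}=L_p(\CCC(K))=1+O(p^{-2})$; since $\sum_p p^{-2}<\infty$ this tail converges. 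As only finitely many primes are ramified (or are the coarsely treated $p\in\{2,3\}$), and each such factor together with $\delta_{S,\infty}$ is a strictly positive constant, the product $\prod_v\delta_{S,v}$ converges to a strictly positive limit, giving $1+2\prod_v\delta_{S,v}>1$. I expect the main obstacle to be the set-to-group reduction of the first paragraph; the analytic content, by contrast, is already packaged in \Cref{cor:large family cores selmer avg}, and the positivity is a clean convergence check.
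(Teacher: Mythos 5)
Your overall route is the paper's: identify the intersection with the corestriction Selmer group $\sel{\CCC(K)}(\QQ,E[2])$ via \Cref{prop:Multiquadratic extn desc of CandF with Twisted Kummers} and then quote \Cref{cor:large family cores selmer avg} with $\cF=\Epsilon$. Your supporting checks --- that $\Epsilon$ is a large family, that the two height orderings agree up to a constant, and that $\prod_v\delta_{S,v}$ converges to a positive limit because the unramified factors are $1+O(p^{-2})$ and only finitely many places contribute anything else --- are all correct and match what the paper establishes in \Cref{thm:Average Size of Cores Selmer FINAL for MQ}.

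The problem is the ``set-to-group reduction'' you isolate, and specifically how you propose to close it. The local identity $\bigcap_{D\in S}\SSS_v^{(D)}(\QQ;E)=\bigcap_{\theta\in\tilde S}\SSS_v^{(\theta)}(\QQ;E)$ is false, as is the assertion that the intersection ``only depends on the field $K$ that $S$ cuts out.'' Take $S=\set{D}$ with $D\neq 1$: then $\tilde S=\set{1,D}$, the left side is $\SSS_v^{(D)}(\QQ;E)$ and the right side is $\SSS_v(\QQ;E)\cap\SSS_v^{(D)}(\QQ;E)$. By \Cref{lem:twisted Kummer image is selfdual} both twisted Kummer images are maximal isotropic, hence of equal dimension, so one contains the other only if they coincide --- and by \Cref{prop:local corestriction images are intersections of twisted Kummers} they differ exactly when the local norm index $E(\QQ_v)/(N_{K_w/\QQ_v}E(K_w)+2E(\QQ_v))$ is nontrivial, which happens with positive density at ramified primes. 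Globally the same example is fatal: for $S=\set{D}$ the left-hand side of the theorem is the average of $\#\sel{2}(E_D/\QQ)$ alone, which is $3$ (Bhargava--Shankar applied to the large twisted family), not $1+2\prod_v\delta_{\set{D},v}<3$. So the statement cannot hold for arbitrary sets $S$; it must be read with $S$ equal to the subgroup $\ker(\QQ^\times/\QQ^{\times2}\to K^\times/K^{\times2})$, which is what the paper's own one-line proof silently assumes and what every application uses (e.g.\ $S=\set{1,D}$). Once $S=\tilde S$ the reduction you worry about is vacuous and the rest of your argument goes through verbatim. You were right to sense that something needed checking here; the correct fix is to restrict the hypothesis on $S$, not to prove the equality of the two intersections.
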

\begin{proof}
    By \Cref{prop:Multiquadratic extn desc of CandF with Twisted Kummers}, if $K/\QQ$ is a multiquadratic extension and $S=\ker(\QQ^\times/\QQ^{\times 2}\to K^\times/K^{\times 2})$ then
    \[\sel{\CCC(K)}(\QQ,E[2])=\bigcap_{\theta\in S}\sel{2}(E_\theta/\QQ).\]
    The result now immediate from \Cref{cor:large family cores selmer avg}.
\end{proof}

\begin{theorem}[\Cref{thm:INTRO prob nonzero goes to zero}]\label{thm:prob nonzero goes to zero}
    Let $D$ be a squarefree integer, then
    \[\limsup_{X\to\infty}\frac{\#\set{E\in\Epsilon(X)~:~\sel{2}(E/\QQ)\cap\sel{2}(E_D/\QQ)\neq 0}}{\#\Epsilon(X)}\ll \braces{\frac{23}{24}}^{\omega(D)},\]
    where the implied constant is independent of $D$.
\end{theorem}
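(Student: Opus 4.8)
The plan is to read the statement off the average-size bound already established in \Cref{thm:Average Size of Cores Selmer FINAL for MQ}. Set $K=\QQ(\sqrt{D})$, so that $S:=\ker(\QQ^\times/\QQ^{\times2}\to K^\times/K^{\times2})=\set{1,D}$ when $D\neq1$ (the cases $D=\pm1$ are trivial, since then $\omega(D)=0$ and the asserted bound is just $\ll1$). By \Cref{prop:Multiquadratic extn desc of CandF with Twisted Kummers} the corestriction Selmer group is exactly the intersection of interest,
\[\sel{\CCC(K)}(\QQ,E[2])=\sel{2}(E/\QQ)\cap\sel{2}(E_D/\QQ).\]
This is an $\FF_2$-vector space, hence nonzero precisely when its order is at least $2$; thus the indicator that it is nontrivial is bounded above by $\#\sel{\CCC(K)}(\QQ,E[2])-1$. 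Summing over $E\in\Epsilon(X)$, dividing by $\#\Epsilon(X)$, and using that the right-hand side converges to $A(K)$ in the notation of \Cref{thm:Average Size of Cores Selmer FINAL for MQ}, the quantity we must bound is at most $A(K)$.

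Next I would invoke the upper bound $A(K)\leq 4\prod_{v\nmid6}L_v(\CCC(K))$ from \Cref{thm:Average Size of Cores Selmer FINAL for MQ}, which usefully discards the coarse factors at $2$ and $3$. The archimedean factor of \Cref{def:L_v} satisfies $4L_\infty(\CCC(K))\leq2$ in both cases (giving $2$ when $K$ is real and $9/5$ when $K$ is imaginary). At each finite $p\geq5$: a totally split prime contributes $L_p(\CCC(K))=1$ exactly, and I claim the inert factors are at most $1$ while the ramified factors are at most $\tfrac{23}{24}$.

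The heart of the matter is two elementary polynomial inequalities, cleanest after using the factorisation
\[p^{10}-1=(p-1)(p+1)(p^4+p^3+p^2+p+1)(p^4-p^3+p^2-p+1).\]
In the ramified case the factor $(p-1)(p^4-p^3+p^2-p+1)$ cancels, so $L_p(\CCC(K))\leq\tfrac{23}{24}$ reduces to
\[46p^5+62p^4+79p^3+84p^2+84p+48\leq 46(p+1)(p^4+p^3+p^2+p+1),\]
that is, to $0\leq 30p^4+13p^3+8p^2+8p-2$, which holds for all $p\geq1$. Observe that $\tfrac{23}{24}$ is exactly the limiting value of the ramified factor as $p\to\infty$, which is what pins down the base of the exponential. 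Likewise, clearing denominators in the inert case reduces $L_p(\CCC(K))\leq1$ to $8p^9-8p^8+8p^7+10p^6+4p^5-7p^4+p^3+8p^2+8p-15\geq0$, evident for $p\geq5$ since the degree-nine term dominates. Combining, and using $4L_\infty\leq2$ together with the split and inert factors being at most $1$, yields
\[4\prod_{v\nmid6}L_v(\CCC(K))\leq 2\prod_{\substack{p\geq5\\ p\mid D}}\frac{23}{24}.\]

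Finally, since $D$ is squarefree the primes $p\geq5$ that ramify in $K/\QQ$ are precisely the prime divisors of $D$ that are at least $5$, of which there are at least $\omega(D)-2$ (only $2$ and $3$ are excluded). As $\tfrac{23}{24}<1$ this gives
\[\limsup_{X\to\infty}\frac{\#\set{E\in\Epsilon(X)~:~\sel{2}(E/\QQ)\cap\sel{2}(E_D/\QQ)\neq0}}{\#\Epsilon(X)}\leq 2\braces{\frac{23}{24}}^{\omega(D)-2}=2\braces{\frac{24}{23}}^{2}\braces{\frac{23}{24}}^{\omega(D)},\]
with implied constant $2(24/23)^2$ independent of $D$, as required. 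The only genuine computation is the pair of polynomial inequalities above; everything else is bookkeeping around \Cref{thm:Average Size of Cores Selmer FINAL for MQ}, so I expect verifying $L_p(\CCC(K))\leq\tfrac{23}{24}$ at ramified primes to be the sole point needing care.
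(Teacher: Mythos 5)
Your proposal is correct and follows essentially the same route as the paper: reduce to the corestriction Selmer group via \Cref{prop:Multiquadratic extn desc of CandF with Twisted Kummers}, bound the indicator of nontriviality by the (size or dimension of the) Selmer group, apply the upper bound $4\prod_{v\nmid 6}L_v(\CCC(K))$ from \Cref{thm:Average Size of Cores Selmer FINAL for MQ}, and extract a factor of $\tfrac{23}{24}$ from each ramified prime $p\geq 5$. The only difference is cosmetic (you use $\#\sel{\CCC(K)}(\QQ,E[2])-1$ where the paper routes through $\dim$ via \Cref{cor:Average Size of Cores Selmer for nat ordering and MQ}), and you supply the explicit polynomial inequalities verifying $L_p\leq\tfrac{23}{24}$ in the ramified case and $L_p\leq 1$ in the inert case, which the paper leaves to the reader.
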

\begin{proof}
For each squarefree integer $D\neq 1$, we have (again using \Cref{prop:Multiquadratic extn desc of CandF with Twisted Kummers} as above)
\[{\#\set{E\in\Epsilon(X)~:~\sel{2}(E/\QQ)\cap\sel{2}(E_D/\QQ)\neq 0}}\leq \sum_{E\in\Epsilon(X)}\dim\sel{\CCC(\QQ(\sqrt{D}))}(\QQ,E[2]),\]
this follows from \Cref{cor:Average Size of Cores Selmer for nat ordering and MQ} after observing that the ramified primes provide the bound $\prod_{v\in\places_\QQ}L_v(\CCC(K))\ll \braces{\frac{23}{24}}^{\omega(D)}$ as required.
\end{proof}

\begin{theorem}[\Cref{thm:INTROTHM imprecise on decomp}]\label{thm:THM imprecise on decomp}
    For each fixed squarefree integer $D$,
    \[\limsup_{X\to\infty}\frac{\#\set{E\in\Epsilon(X)~:~\eta_{E,D}\textnormal{ is not an isomorphism}}}{\#\Epsilon(X)}\ll \braces{\frac{23}{24}}^{\omega(D)},\]
    where the implied constant is independent of $D$.
\end{theorem}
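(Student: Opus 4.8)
The plan is to deduce this from \Cref{thm:prob nonzero goes to zero} by translating the failure of $\eta_{E,D}$ to be an isomorphism into the nonvanishing of the Selmer intersection $\sel{2}(E/\QQ)\cap\sel{2}(E_D/\QQ)$, up to a negligible set. Write $K=\QQ(\sqrt{D})$ and let $\sigma$ generate $\gal(K/\QQ)$; by \Cref{def:quadtwistisom} we may identify $E(\QQ)$ with $E(K)^\sigma$ and $\varphi_D(E_D(\QQ))$ with the subset of $E(K)$ on which $\sigma$ acts by $-1$. Since $2P=(P+\sigma P)+(P-\sigma P)$ for every $P\in E(K)$, the image of $\eta_{E,D}$ contains $2E(K)$, so its cokernel $C:=E(K)/(E(\QQ)+\varphi_D(E_D(\QQ)))$ is an elementary abelian $2$-group; moreover a direct eigenspace computation gives $\ker\eta_{E,D}\cong E(\QQ)[2]$. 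Thus $\eta_{E,D}$ fails to be an isomorphism exactly when $E(\QQ)[2]\neq 0$ or $C\neq 0$, and I would treat these two sources of failure separately.

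The heart of the argument is an injection $\Phi\colon C\hookrightarrow \sel{\CCC(K)}(\QQ,E[2])$ defined by $\Phi(R)=\delta_\QQ(N_{K/\QQ}R)$, where $\delta_\QQ$ is the connecting map of the multiplication-by-$2$ sequence and $N_{K/\QQ}R=R+\sigma R\in E(\QQ)$. First I would check that $\Phi$ is well defined on $C$: if $R\in E(\QQ)+\varphi_D(E_D(\QQ))$ then $N_{K/\QQ}R\in 2E(\QQ)$, so the class $\delta_\QQ(N_{K/\QQ}R)$ is trivial. That $\Phi$ lands in the corestriction Selmer group is a purely local check against \Cref{def:cores selmer}: at places $v$ split in $K$ the condition $\CCC_v$ is the full local image, while at non-split $v$ the image of $N_{K/\QQ}R$ in $E(\QQ_v)$ equals the local norm $N_{K_w/\QQ_v}(R_w)$ of the image $R_w$ of $R$ in $E(K_w)$, hence lies in $\CCC_v(K/\QQ;E)$. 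Injectivity is clean and needs no hypothesis on torsion: if $N_{K/\QQ}R=2Q_0$ with $Q_0\in E(\QQ)$, then $\sigma(R-Q_0)=-(R-Q_0)$, so $R-Q_0\in\varphi_D(E_D(\QQ))$ and $R$ is trivial in $C$. Finally, \Cref{prop:Multiquadratic extn desc of CandF with Twisted Kummers}, applied with $S=\set{1,D}$, identifies $\sel{\CCC(K)}(\QQ,E[2])=\sel{2}(E/\QQ)\cap\varphi_D^{*}\sel{2}(E_D/\QQ)$, so $C\neq 0$ forces this intersection to be nonzero.

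It then remains to dispose of the kernel contribution: the set of $E\in\Epsilon$ with $E(\QQ)[2]\neq 0$ is precisely the set whose defining cubic has a rational root, and a standard lattice-point count (such curves of height at most $X$ number $O(X^{1/2})$, against $\#\Epsilon(X)\asymp X^{5/6}$) shows this set has density zero and so contributes $0$ to the $\limsup$. Combining the two cases, the curves for which $\eta_{E,D}$ is not an isomorphism lie in $\set{E:E(\QQ)[2]\neq 0}\cup\set{E:\sel{2}(E/\QQ)\cap\sel{2}(E_D/\QQ)\neq 0}$, and applying \Cref{thm:prob nonzero goes to zero} to the second set yields the bound $\ll\braces{\tfrac{23}{24}}^{\omega(D)}$ with implied constant independent of $D$. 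I expect the main obstacle to be the construction and local verification of $\Phi$ — in particular confirming that $\delta_\QQ(N_{K/\QQ}R)$ satisfies the corestriction conditions at the ramified and inert places — since the eigenspace and norm bookkeeping must interact correctly with the local norm groups cutting out the conditions $\CCC_v$; by contrast the density-zero input and the final union bound are routine.
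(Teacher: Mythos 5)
Your argument is correct and follows the paper's proof in all essentials: both reduce the failure of $\eta_{E,D}$ to either $E(\QQ)[2]\neq 0$ (a density-zero set) or the nonvanishing of $\sel{\CCC(\QQ(\sqrt{D}))}(\QQ,E[2])$, and then invoke the average-size bound on the corestriction Selmer group. The only differences are that the paper simply cites \cite{MR4400944}*{Lemma 6.10} for the implication you establish by hand via the injection $\Phi\colon C\hookrightarrow\sel{\CCC(K)}(\QQ,E[2])$ (your construction, local verification, and injectivity argument for $\Phi$ are all sound), and it uses Hilbert irreducibility rather than your lattice-point count to show the $2$-torsion locus has density zero.
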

\begin{proof}
    By \cite{MR4400944}*{Lemma 6.10}, if $E(\QQ)[2]=0$ and $\eta_{E,D}$ is not an isomorphism then $\sel{\CCC(\QQ(\sqrt{D}))}(\QQ, E[2])\neq0$.  In particular, since Hilbert's irreducibility theorem implies that the proportion of $E\in\Epsilon(X)$ for which $E(\QQ)[2]\neq0$ is $o(1)$, we need only bound the proportion of $E\in\Epsilon(X)$ for which $\sel{\CCC(\QQ(\sqrt{D}))}(\QQ, E[2])\neq0$.  Thus the claim follows from \Cref{cor:Average Size of Cores Selmer for nat ordering and MQ}, after observing that the ramified primes provide the bound $\prod_{v\in\places_\QQ}L_v(K)\ll \frac{23}{24}^{\omega(D)}$ as required.
\end{proof}

\begin{corollary}[\Cref{thm:INTRO prob nontrivial is positive}]\label{thm:prob nontrivial is positive} Let $D$ be a squarefree integer.  Then,
    \[\liminf_{X\to\infty}\frac{\#\set{E\in\Epsilon(X)~:~\sel{2}(E/\QQ)\cap \sel{2}(E_D/\QQ)\neq 0}}{\#\Epsilon(X)}>0.\]
\end{corollary}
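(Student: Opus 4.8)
The plan is to deduce the result from the exact average computed in \Cref{thm:average size of intersection is product of densities}, combined with a second moment (Cauchy--Schwarz) argument. First I would fix $K=\QQ(\sqrt{D})$, so that $S=\ker(\QQ^\times/\QQ^{\times2}\to K^\times/K^{\times2})=\set{1,D}$, and invoke \Cref{prop:Multiquadratic extn desc of CandF with Twisted Kummers} to identify
\[\sel{2}(E/\QQ)\cap\sel{2}(E_D/\QQ)=\sel{\CCC(K)}(\QQ, E[2])\subseteq H^1(\QQ,E[2]).\]
Then \Cref{thm:average size of intersection is product of densities} shows that $Y_E:=\#\sel{\CCC(K)}(\QQ,E[2])$ has a genuine limiting average
\[\frac{1}{\#\Epsilon(X)}\sum_{E\in\Epsilon(X)}Y_E\longrightarrow a:=1+2\prod_{v\in\places_\QQ}\delta_{S,v}>1.\]
Since $Y_E$ is the order of a finite abelian $2$-group, it is a power of $2$; thus $Y_E\geq 1$, with equality precisely when the intersection is trivial, and $Y_E\geq 2$ otherwise. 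In particular $\sum_{E\in\Epsilon(X)}(Y_E-1)=(a-1+o(1))\#\Epsilon(X)$ is positive. This first moment alone is not enough, however: the average could in principle be inflated by a vanishing proportion of curves carrying very large intersections, so some control on the upper tail of $Y_E$ is essential.

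The key step is Cauchy--Schwarz. Writing $N_1(X):=\#\set{E\in\Epsilon(X)~:~Y_E\geq 2}$ and using $\sum_E(Y_E-1)=\sum_{Y_E\geq2}(Y_E-1)$, I would obtain
\[\braces{\sum_{E\in\Epsilon(X)}(Y_E-1)}^2\leq N_1(X)\sum_{E\in\Epsilon(X)}(Y_E-1)^2\leq N_1(X)\sum_{E\in\Epsilon(X)}Y_E^2,\]
and hence
\[\frac{N_1(X)}{\#\Epsilon(X)}\geq \frac{\braces{\sum_{E\in\Epsilon(X)}(Y_E-1)}^2}{\#\Epsilon(X)\,\sum_{E\in\Epsilon(X)}Y_E^2}=\frac{(a-1)^2\,\#\Epsilon(X)\,(1+o(1))}{\sum_{E\in\Epsilon(X)}Y_E^2}.\]
It therefore suffices to bound the second moment $\sum_{E\in\Epsilon(X)}Y_E^2$ by $O(\#\Epsilon(X))$: with implied constant $M$ this would give $\liminf_{X}N_1(X)/\#\Epsilon(X)\geq (a-1)^2/M>0$, which is exactly the claim.

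For the second moment I would use the inclusion $\sel{\CCC(K)}(\QQ,E[2])\subseteq\sel{2}(E/\QQ)$, which gives $Y_E^2\leq\braces{\#\sel{2}(E/\QQ)}^2$ pointwise and so reduces matters to the uniform boundedness of the second moment of the $2$-Selmer group over the family of all elliptic curves:
\[\limsup_{X\to\infty}\frac{1}{\#\Epsilon(X)}\sum_{E\in\Epsilon(X)}\braces{\#\sel{2}(E/\QQ)}^2<\infty.\]
I expect this to be the main obstacle. It does not follow from the first-moment machinery of \Cref{sec:SelmerBundles}, which computes averages of $\#\sel{\LLL}-1$ but not of $\braces{\#\sel{\LLL}}^2$ (the latter counts \emph{pairs} of soluble orbits with matching invariants, a genuinely harder problem). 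The bound is nonetheless strongly expected: the Poonen--Rains heuristic \cite{Poonen_2012} predicts that $\dim_{\FF_2}\sel{2}(E/\QQ)$ has super-exponentially decaying tails, so that every exponential moment $\EE[c^{\dim}]$, and in particular $\EE[4^{\dim}]=\EE[\braces{\#\sel{2}}^2]$, is finite. Making this rigorous and uniform in $X$ requires either a direct second-moment count in the style of \cite{MR3272925} or a sufficiently strong upper-tail estimate on $\dim_{\FF_2}\sel{2}(E/\QQ)$ in the family of all elliptic curves; supplying this input is the crux of the argument.
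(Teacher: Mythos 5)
Your argument is exactly the paper's: identify the intersection with $\sel{\CCC(K)}(\QQ,E[2])$ via \Cref{prop:Multiquadratic extn desc of CandF with Twisted Kummers}, apply Cauchy--Schwarz to $\sum_E(\#\sel{\CCC(K)}(\QQ,E[2])-1)$, feed in the first moment from \Cref{thm:average size of intersection is product of densities}, and reduce the second moment to that of $\#\sel{2}(E/\QQ)$ via the inclusion $\sel{\CCC(K)}(\QQ,E[2])\subseteq\sel{2}(E/\QQ)$. The one input you leave unsupplied --- a uniform bound on the second moment of the $2$-Selmer group over the family of all elliptic curves --- is not an open obstacle but a theorem of Bhargava--Shankar--Swaminathan \cite{swaminathan2021second}*{Theorem 1.1}, which is exactly what the paper invokes to get the explicit lower bound $\bigl(\prod_{v}\delta_{\set{1,D},v}\bigr)^2/15$. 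You are right that it does not follow from the first-moment machinery of \Cref{sec:SelmerBundles} and requires a genuinely harder count of pairs of orbits; but once that citation is supplied your proposal is complete and coincides with the paper's proof.
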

\begin{proof}
Let $K=\QQ(\sqrt{D})$, so that by \Cref{prop:Multiquadratic extn desc of CandF with Twisted Kummers} we may replace the intersection with $\sel{\CCC(K)}(\QQ,E[2])$ for brevity.  Denote
\[\cA(X):=\set{E\in\Epsilon(X)~:~\sel{\CCC(K)}(\QQ,E[2])\neq 0}.\]
We apply the Cauchy--Schwartz inequality to obtain for each $X>0$
\begin{align*}
    \braces{\sum_{E\in\Epsilon(X)}\braces{\#\sel{\CCC(K)}(\QQ,E[2])-1}}^2
    &\leq \#\cA(X)\braces{\sum_{E\in\Epsilon(X)}\braces{\#\sel{\CCC(K)}(\QQ,E[2])-1}^2}.
\end{align*}
Rearranging and dividing through by $\#\Epsilon(X)$, we have for large enough $X$
\begin{align*}
    \frac{\#\cA(X)}{\#\Epsilon(X)}&\geq
    \braces{\frac{\sum_{E\in\Epsilon(X)}\braces{\#\sel{\CCC(K)}(\QQ,E[2])-1}}{\#\Epsilon(X)}}^2\braces{\frac{\sum_{E\in\Epsilon(X)}\braces{\#\sel{\CCC(K)}(\QQ,E[2])-1}^2}{\#\Epsilon(X)}}^{-1}.
\end{align*}
Applying \Cref{thm:average size of intersection is product of densities}, and bounding the second moment using \cite{swaminathan2021second}*{Theorem 1.1}, we obtain
\begin{align*}
\liminf_{X\to\infty}\frac{\#\cA(X)}{\#\Epsilon(X)}
&\geq \braces{\prod_{v\in\places_\QQ}\delta_{\set{1,D},v}}^2\liminf_{X\to\infty}\braces{\frac{\sum_{E\in\Epsilon(X)}\braces{\#\sel{2}(E/\QQ)-1}^2}{\#\Epsilon(X)}}^{-1}\\
&\geq \frac{\braces{\prod_{v\in\places_\QQ}\delta_{\set{1,D},v}}^2}{15}.
\end{align*}
\end{proof}

\section{Heuristic for Intersections}\label{sec:Heuristic}
We now present a heuristic to explain the statistical behaviour of $\sel{2}(E/\QQ)\cap \sel{2}(E_D/\QQ)$, which we conjecture to hold for certain families of $E/\QQ$ and is already known to hold in some cases.  In \Cref{subsec:NaiveModel} we present a na\"ive model for the behaviour of this intersection, postponing the proofs of necessary lemmata to the end of the subsection.  Following on, in \Cref{subsec:MainHeuristic} we discuss when this model should be correct, and when it should be `almost' correct in a formal sense.  Finally in \Cref{subsec:HeuristicEvidence} we discuss what evidence there is for this heuristic.

\subsection{Naive Model}\label{subsec:NaiveModel}
There is already a well accepted model for the statistical behaviour of $\sel{2}(E/\QQ)$, which was first proposed by Poonen--Rains \cite{Poonen_2012}.  The motivation for this is the left and central entries of the diagram: 
\begin{equation}\label{eq:INTROSelmerIntDiagram}
\begin{tikzcd}
    &H^1(\QQ, E[2])\ar{d}{\loc}&\\
    \prod\limits_{v\in\places_\QQ}E(\QQ_v)/2E(\QQ_v)\ar{r}{\prod_v\delta_v}&\prod\limits_{v\in\places_\QQ}H^1(\QQ_v,E[2])&\ar{l}[swap]{\prod_v\delta_{D,v}}\prod\limits_{v\in\places_\QQ}E_D(\QQ_v)/2E_D(\QQ_v).
\end{tikzcd}
\end{equation}

Poonen--Rains observe that $\prod_{v\in\places_\QQ}H^1(\QQ_v,E[2])$ is a quadratic space, and $\im(\loc)$ and $\im\braces{\prod_v\delta_v}$ are maximal isotropic subspaces.  They then predict that since
\[\sel{2}(E/\QQ)=\im (\loc)\cap \im\braces{\prod_v\delta},\]
it is reasonable to model $\sel{2}(E/\QQ)$ as though these two images were in fact random maximal isotropic subspaces of such a quadratic space.  This heuristic agrees with all known results on $2$-Selmer groups: the average size of $\sel{2}(E/\QQ)$ is known to agree with the model by work of Bhargava--Shankar \cite{MR3272925}, and recently the second moment of the size has been shown to agree (assuming a conjectural tail estimate) by Bhargava--Shankar--Swaminathan \cite{swaminathan2021second}.

We note the following lemma.
\begin{lemma}[\cite{MR3043582}*{Lemma 5.2(ii)}]
    For every elliptic curve $E/\QQ$, the image of $\prod_v\delta_{D,v}$ is maximal isotropic with respect to the quadratic form of Poonen--Rains \cite{Poonen_2012}.
\end{lemma}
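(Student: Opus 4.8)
The plan is to reduce to the theorem of Poonen--Rains for the twist $E_D$ itself and then transport the conclusion along the isomorphism $\varphi_D$. By \Cref{def:twistedKummer} the map $\delta_{D,v}$ is the composite of the local connecting map for $E_D$ at $v$ with the identification $\varphi_D^*$, so that
\[
\im\Big(\prod_{v\in\places_\QQ}\delta_{D,v}\Big)={\prod_{v\in\places_\QQ}}'\SSS_v^{(D)}(\QQ;E)\subseteq{\prod_{v\in\places_\QQ}}'H^1(\QQ_v,E[2]),
\]
and, as recorded just after \Cref{def:twistedKummer}, $\SSS_v^{(D)}(\QQ;E)=H^1_\nr(\QQ_v,E[2])$ for all but finitely many $v$, so the image indeed lies in the restricted product on which the global Poonen--Rains quadratic form is defined. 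It therefore suffices to show that each local factor $\SSS_v^{(D)}(\QQ;E)$ is maximal isotropic for the local Poonen--Rains quadratic form $q_v$ on $H^1(\QQ_v,E[2])$: a collection of local maximal isotropic subspaces agreeing with $H^1_\nr$ almost everywhere assembles to a maximal isotropic subspace of the restricted-product quadratic space, and this gluing is part of the foundational setup of \cite{Poonen_2012}.

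For the local statement I would first invoke Poonen--Rains \cite{Poonen_2012} applied to $E_D$: the image of the local Kummer map into $H^1(\QQ_v,E_D[2])$ is maximal isotropic for the quadratic form $q_v^{E_D}$ attached to $E_D[2]$. The key input is then that $\varphi_D^*\colon H^1(\QQ_v,E_D[2])\to H^1(\QQ_v,E[2])$ is an isometry between $(H^1(\QQ_v,E_D[2]),q_v^{E_D})$ and $(H^1(\QQ_v,E[2]),q_v^E)$. This holds because $\varphi_D$ restricts to a $G_{\QQ_v}$-equivariant isomorphism $E_D[2]\cong E[2]$ preserving the Weil pairing $e_2$ (as in \Cref{def:quadtwistisom} and \Cref{lem:twisted Kummer image is selfdual}), and the Poonen--Rains form is functorial for such isomorphisms of the pair $(E[2],e_2)$. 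Granting this, $\SSS_v^{(D)}(\QQ;E)$ is the isometric image of a maximal isotropic subspace and hence is itself maximal isotropic for $q_v^E$, completing the local step.

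The main obstacle is precisely the claim that $\varphi_D^*$ is an isometry for the quadratic refinement, and not merely for the associated bilinear (Tate) pairing: the latter is the content of \Cref{lem:twisted Kummer image is selfdual}, but maximal isotropy for $q_v$ is genuinely stronger in characteristic $2$, since it requires in addition that $q_v$ vanish on the subspace. To settle this I would trace through the definition of the Poonen--Rains form and check that it is canonically determined by $E[2]$ together with its Weil pairing as a $G_{\QQ_v}$-module; because $\varphi_D$ arises from an honest isomorphism of elliptic curves over $\QQ_v(\sqrt{D})$ whose restriction to the $2$-torsion is already defined over $\QQ_v$, it preserves every structure entering that definition, giving $q_v^E\circ\varphi_D^*=q_v^{E_D}$. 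Alternatively, one may cite \cite{MR3043582}*{Lemma 5.2(ii)} directly, whose proof carries out exactly this verification.
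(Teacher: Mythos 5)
Your argument is correct and is essentially the one carried out in the cited source: the paper itself gives no proof of this lemma, deferring entirely to \cite{MR3043582}*{Lemma 5.2(ii)}, whose content is precisely the verification you identify as the crux, namely that $\varphi_D^*$ is an isometry for the Poonen--Rains quadratic refinement $q_v$ and not merely for the underlying bilinear pairing. Your reduction to the Poonen--Rains theorem for $E_D$ followed by transport along $\varphi_D^*$, together with the gluing of local maximal isotropic subspaces in the restricted product, matches the intended argument.
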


In particular, $\sel{2}(E_D/\QQ)=\im(\loc)\cap\im\braces{\prod_v\delta_{D,v}}$ is also an intersection of two maximal isotropic subspaces, and so we expect it should have the same distribution as $\sel{2}(E/\QQ)$.  Since the results of Bhargava--Shankar and Bhargava--Shankar--Swaminathan hold for `large families', one can show (see e.g. \cite{Paterson2021}*{Prop 2.9}) that their results give identical evidence towards this twisted application of the Poonen--Rains heuristics.

The na\"ive heuristic is then that
\[\sel{2}(E/\QQ)\cap \sel{2}(E_D/\QQ)=\im(\loc)\cap\im\braces{\prod_v\delta_v}\cap\im\braces{\prod_v\delta_{D,v}},\]
should in fact behave as an intersection of three random maximal isotropic subspaces.  Investigating this model, we discover the following. 
\begin{lemma}\label{lem:PR intersection of 3}
    In the Poonen--Rains model, an intersection of three random maximal isotropic subspaces is trivial with probability $1$.
\end{lemma}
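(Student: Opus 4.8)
The plan is to work with the finite-dimensional approximations underlying the Poonen--Rains model and show that the limiting probability of a nontrivial triple intersection is $0$. Fix a nondegenerate quadratic space $V$ over $\FF_2$ of dimension $2n$, in which maximal isotropic subspaces have dimension $n$, and let $W_1,W_2,W_3$ be independent, uniformly random maximal isotropic subspaces. The assertion ``trivial with probability $1$'' is the statement that $\prob{W_1\cap W_2\cap W_3\neq 0}\to 0$ as $n\to\infty$, and I would prove this by a first--moment (union bound) argument: condition on the pair $(W_1,W_2)$ and then intersect with an independent $W_3$.

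The key local input is an estimate for how often a single vector lies in a random maximal isotropic subspace. For a fixed nonzero (necessarily isotropic) vector $v$, the maximal isotropic subspaces $W\ni v$ satisfy $W\subseteq v^\perp$ and are in bijection with the maximal isotropic subspaces of the nondegenerate quotient space $v^\perp/\gp{v}$ of dimension $2n-2$. Counting maximal isotropics via the product formula $\prod_{i=0}^{n-1}(2^i+1)$ (in the split case) gives
\[\prob{v\in W}=\frac{\prod_{i=0}^{n-2}(2^i+1)}{\prod_{i=0}^{n-1}(2^i+1)}=\frac{1}{2^{n-1}+1},\]
for a uniformly random maximal isotropic $W$; in any event this probability is $O(2^{-n})$ and tends to $0$.

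Next I would condition on $W_1,W_2$ and set $U:=W_1\cap W_2$, which is contained in $W_1$ and hence totally isotropic, with exactly $\#U-1$ nonzero vectors. Since $W_3$ is independent of $(W_1,W_2)$ and uniform, the union bound over these vectors gives
\[\prob{U\cap W_3\neq 0\mid W_1,W_2}\leq \sum_{0\neq v\in U}\prob{v\in W_3}=\frac{\#U-1}{2^{n-1}+1}.\]
Taking expectation over $(W_1,W_2)$ and using $W_1\cap W_2\cap W_3=U\cap W_3$ then yields
\[\prob{W_1\cap W_2\cap W_3\neq 0}\leq \frac{\expect{\#(W_1\cap W_2)}-1}{2^{n-1}+1}.\]
Finally, $\expect{\#(W_1\cap W_2)}$ is the average size of the intersection of two random maximal isotropics in the model (equivalently the modelled average $2$-Selmer size). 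A direct count of the nonzero isotropic vectors of $V$, each weighted by $\prob{v\in W_1}\prob{v\in W_2}=(2^{n-1}+1)^{-2}$, shows $\expect{\#(W_1\cap W_2)}-1\to 2$ and is bounded uniformly in $n$. Hence the right--hand side is $O(2^{-n})\to 0$, which is the claim.

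The main obstacle is bookkeeping rather than conceptual: one must pin down the precise finite--dimensional model (the type of the quadratic form and the exact enumeration of maximal isotropics) so that the value $1/(2^{n-1}+1)$ and the uniform boundedness of $\expect{\#(W_1\cap W_2)}$ are justified, and one must be careful that the independence of $W_3$ from $(W_1,W_2)$ is used correctly when passing from the conditional union bound to the unconditional estimate. Once these are in place, the limit $n\to\infty$ is immediate.
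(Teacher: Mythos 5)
Your proposal is correct, and it takes a genuinely different (and somewhat leaner) route than the paper. The paper's proof (given as Lemma~\ref{lem:precise intersection of 3 is zero}) first establishes a tail bound on the full distribution of $\dim(W\cap Z_1)$, namely $\Prob(\dim W\cap Z_1=r)\leq p^{-r(r-3)/2}$ via Poonen--Rains, then a union bound $\Prob(A\cap Z_2\neq 0)$ for a fixed subspace $A$, and finally sums the product of these two bounds over $r=\dim(W\cap Z_1)$, using convergence of the resulting series. You instead condition on $(W_1,W_2)$, apply the union bound over the nonzero vectors of $U=W_1\cap W_2$ with the single-vector probability $\Prob(v\in W_3)=1/(2^{n-1}+1)$, and then take expectations, so that the only global input you need is the first moment $\EE[\#(W_1\cap W_2)]$ rather than the whole distribution of its dimension; that first moment is computed by the same single-vector probability together with the count of nonzero isotropic vectors, $(2^{n-1}+1)(2^n-1)$, giving $\EE[\#(W_1\cap W_2)]-1=(2^n-1)/(2^{n-1}+1)\to 2$. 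This buys a shorter argument whose only nontrivial ingredient is the enumeration of maximal isotropics through a point, at the cost of a weaker quantitative bound ($O(2^{-n})$ rather than the superpolynomially small bound the paper records); both are amply sufficient for the qualitative statement. Two small points: your value $1/(2^{n-1}+1)$ for the single-vector probability is indeed the correct one in the split case (it checks out for $n=1,2$ by direct count), and your argument transfers verbatim to $\FF_p$ for the version of the lemma the paper actually states, with the limit of $\EE[\#(W_1\cap W_2)]-1$ becoming $p$. As you note, one should fix the precise finite-dimensional model (weakly metabolic, so effectively hyperbolic for these counts) before quoting the enumeration, but once that bookkeeping is done your proof is complete.
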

\noindent In particular, the na\"ive model would predict a different result to that in \Cref{thm:INTRO prob nontrivial is positive}.  We postpone a precise statement/proof of this claim to the end of the section, as \Cref{lem:precise intersection of 3 is zero}.

\subsection{Dependence}\label{subsec:Dependence}
Re-examining these subspaces, assuming that $\im\braces{\prod_v\delta_v}$ and $\im\braces{\prod_v\delta_{D,v}}$ are independently random would be a trifle na\"ive:  the images of $\delta_v$ and $\delta_{D,v}$ coincide at all but finitely many $v$, at least those of good reduction for $E$ not dividing $D$ and may also coincide at the remaining places (subject to local conditions on $E$).  If these images often coincide at most of the places of bad reduction of $E$ then there is some clear dependence between the two images.  We formalise this as follows.

\begin{definition}\label{def:local images are dependent}
    Let $\cA\subseteq \Epsilon$ be an infinite subset, and write $\cA(X):=\cA\cap \Epsilon(X)$.  We say that the local image at $D$ is dependent in $\cA$ if 
    \[\limsup_{X\to\infty}\frac{1}{\#\cA(X)}\sum_{E\in\cA(X)}\#\set{p~:~\im(\delta_p)\neq\im(\delta_{D,p})\textnormal{ for $E$ in diagram \Cref{eq:INTROSelmerIntDiagram}}}<\infty.\]
    We say that the local images are independent in $\cA$ for $D$ otherwise.
\end{definition}

\noindent There is a more convenient way to write dependence in terms of the genus theory invariant from \cite{Paterson2021}*{Definition 4.10} (see also \cites{kramer1981arithmetic,MR4400944}), which we will now recall.
\begin{lemma}\label{lem:genus theory gives independence}
    Let $\cA\subseteq \Epsilon$ be an infinite subset, and write $\cA(X):=\cA\cap \Epsilon(X)$.  Let $D\neq 1$ be a squarefree integer, and $K=\QQ(\sqrt{D})$.  Then the local image at $D$ is dependent in $\cA$ if and only if
    \[\cG_\cA^+\braces{K}:=\limsup_{X\to\infty}\frac{1}{\#\cA(X)}\sum_{E\in\cA(X)}g_2\braces{K/\QQ;\,E}<\infty,\]
    where $g_2$ is the genus theory invariant from \cite{Paterson2021} given by
    \[g_2\braces{K/\QQ;\,E}:=\sum_{v\in\places_\QQ}\frac{E(\QQ_v)}{N_{K_w/\QQ_v}E(K_w)}.\]
    The $w$ in each summand above is a choice of place of $K$ lying over $v$.
\end{lemma}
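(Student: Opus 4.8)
The plan is to localise the problem and match, place by place, the discrepancy between the two images against the norm index appearing in $g_2$, and then compare the two resulting counts up to explicit constants. First I would unwind the diagram \eqref{eq:INTROSelmerIntDiagram}: by \Cref{def:twistedKummer} the relevant images are the (twisted) Kummer images, $\im(\delta_p)=\SSS_p(\QQ;E)$ and $\im(\delta_{D,p})=\SSS_p^{(D)}(\QQ;E)$, so the ``bad'' set in \Cref{def:local images are dependent} is $\set{p : \SSS_p(\QQ;E)\neq\SSS_p^{(D)}(\QQ;E)}$. By \Cref{lem:twisted Kummer image is selfdual} both groups are maximal isotropic, hence of equal cardinality, so they coincide if and only if their intersection $\CCC_p:=\SSS_p(\QQ;E)\cap\SSS_p^{(D)}(\QQ;E)$ equals $\SSS_p(\QQ;E)$. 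For the quadratic field $K=\QQ(\sqrt{D})$ we have $S=\ker(\QQ^\times/\QQ^{\times2}\to K^\times/K^{\times2})=\set{1,D}$, so \Cref{prop:Multiquadratic extn desc of CandF with Twisted Kummers} identifies $\CCC_p$ with the local corestriction group $\CCC_p(K/\QQ;E)$ of \Cref{def:cores selmer}.

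The key local computation is the isomorphism $\SSS_p(\QQ;E)/\CCC_p(K/\QQ;E)\cong E(\QQ_p)/N_{K_w/\QQ_p}E(K_w)$. This rests on the elementary remark that $2E(\QQ_p)\subseteq N_{K_w/\QQ_p}E(K_w)$ always: at a genuinely quadratic place the nontrivial automorphism of $K_w/\QQ_p$ fixes $E(\QQ_p)$, so $N$ restricts to $[2]$ on $E(\QQ_p)\subseteq E(K_w)$, while at split places $K_w=\QQ_p$ and the containment is trivial. Consequently the ``$+2E(\QQ_p)$'' in \Cref{def:cores selmer} is redundant, and since $\SSS_p(\QQ;E)\cong E(\QQ_p)/2E(\QQ_p)$ the quotient collapses to $E(\QQ_p)/N_{K_w/\QQ_p}E(K_w)$. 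In particular $p$ is bad precisely when $E(\QQ_p)/N_{K_w/\QQ_p}E(K_w)\neq 0$, i.e.\ exactly when the $p$-th summand of $g_2(K/\QQ;E)$ is nonzero.

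It then remains to compare $N(E):=\#\set{p:\SSS_p(\QQ;E)\neq\SSS_p^{(D)}(\QQ;E)}$ with $g_2(K/\QQ;E)$. For odd $p$ the surjection $E(\QQ_p)/2E(\QQ_p)\twoheadrightarrow E(\QQ_p)/N_{K_w/\QQ_p}E(K_w)$ together with the bound $\dim_{\FF_2}E(\QQ_p)/2E(\QQ_p)=\dim_{\FF_2}E(\QQ_p)[2]\leq 2$ shows that every nonzero summand of $g_2$ lies in $\set{1,2}$; summing over odd $p$, the number of odd bad primes and the odd part of $g_2$ therefore differ by at most a factor of $2$. The two places $v\in\set{2,\infty}$ each contribute a single, uniformly bounded amount to both quantities (at most $3$ and $1$ respectively, again from the local bound on $\dim_{\FF_2}E(\QQ_v)/2E(\QQ_v)$), independently of $E$ and $D$. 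Assembling these estimates yields universal constants with $\tfrac12 g_2(K/\QQ;E)-2\leq N(E)\leq g_2(K/\QQ;E)+1$ for every $E\in\cA$. Averaging over $\cA(X)$ and taking $\limsup_{X\to\infty}$, the finiteness of $\limsup_X\frac{1}{\#\cA(X)}\sum_{E\in\cA(X)}N(E)$ and of $\cG_\cA^+(K)$ are equivalent, which is exactly the asserted characterisation of dependence.

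I expect the main subtlety to be bookkeeping rather than conceptual depth: the count $N(E)$ weights each bad prime by $1$, whereas $g_2$ weights it by the $\FF_2$-dimension of the local norm quotient (which may be $2$), so the equivalence holds only up to the explicit multiplicative and additive constants above. Isolating the bounded anomalous contributions at $2$ and $\infty$, and verifying the uniform bound $\dim_{\FF_2}E(\QQ_p)/2E(\QQ_p)\leq 2$ for odd $p$, is where care is required; everything else is a direct consequence of the local identities already established in \Cref{sec:CoresSelmerGroups}.
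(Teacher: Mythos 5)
Your proposal is correct and follows essentially the same route as the paper: both identify the local discrepancy $\im(\delta_v)\neq\im(\delta_{D,v})$ with nonvanishing of $E(\QQ_v)/N_{K_w/\QQ_v}E(K_w)$ via the corestriction description of the intersection (using that the two Kummer images have equal size), and then compare the count of bad places with $g_2$ up to uniform constants. The only cosmetic difference is that you derive the constants directly from the bound $\dim_{\FF_2}E(\QQ_v)/2E(\QQ_v)\leq 3$, whereas the paper cites \cite{Paterson2021}*{Lemma 5.3} for the same two-sided comparison.
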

\begin{proof}
    For each place $v\in\places_\QQ$, let $w\in\places_K$ be a place extending $v$.  By \Cref{prop:local corestriction images are intersections of twisted Kummers} there is a short exact sequence of finite abelian groups
    \[0\to \im(\delta_p)\cap\im(\delta_{D,p})\to \frac{E(\QQ_v)}{2E(\QQ_v)}\to \frac{E(\QQ_v)}{N_{K_w/\QQ_v}E(K_w)}\to 0.\]
    Now, if $v$ is a finite place then 
    \[\dim\im(\delta_{D,v})=\dim E(\QQ_v)[2]\#\cO_{K_w}/2\cO_{K_w}=\dim\im(\delta_p).\]
    In particular, the intersection being a strict subset of $E(\QQ_v)/2E(\QQ_v)$ is equivalent to the intersections not being equal.  Similarly the analogous result holds for archimedean places since quadratic twisting does not change the sign of the discriminant.

    We note that by \cite{Paterson2021}*{Lemma 5.3} there exist uniform constants $C_1,C_2>0$ such that for every $E$, $D$ and $v$
    \[C_1g_2(K/\QQ;E)\leq \#\set{v~:~\im(\delta_v)\neq \im(\delta_{D,v})\textnormal{ for $E$ in \Cref{eq:INTROSelmerIntDiagram}}}\leq C_2g_2(K/\QQ;E).\]
    Hence the result holds.
\end{proof}

\subsection{The Heuristic}\label{subsec:MainHeuristic}

If the local images are dependent in an infinite family $\cA$ of elliptic curves, then we do not expect the na\"ive heuristic to hold.  However, if the images at $p\mid D$ are able to differ with some positive probability then we expect when there are a lot of such places then the distribution of $\sel{2}(E/\QQ)\cap\sel{2}(E_D/\QQ)$ should at least be very close to the naive heuristic.  More specifically, we expect that as $\omega(D)\to\infty$ the distribution of the intersection should approximate our na\"ive heuristic.  In light of \Cref{lem:PR intersection of 3}, we make this heuristic precise as follows.

\begin{heur}\label{heur:the heuristic}
    Let $\cA\subseteq \Epsilon$ be a reasonable family of elliptic curves. 
    \begin{enumerate}
        \item\label{heur:INTRO enum no limit} If the local image at a squarefree integer $D\neq 1$ is independent in $\cA$ then the probability that $\sel{2}(E/\QQ)\cap\sel{2}(E_D/\QQ)=0$ is $1$.  That is,
        \[\lim_{X\to\infty}\frac{\#\set{E\in\cA(X)~:~\sel{2}(E/\QQ)\cap\sel{2}(E_D/\QQ)=0}}{\#\cA(X)}=1.\]
        \item\label{heur:INTRO enum limit} If the local images at all but finitely many squarefree $D$ are dependent in $\cA$ and
        \[\liminf_{\omega(D)\to\infty}\cG_\cA^+\braces{\QQ(\sqrt{D})}=\infty,\]
        then the probability that $\sel{2}(E/\QQ)\cap\sel{2}(E_D/\QQ)=0$ tends to $1$ as $\omega(D)\to\infty$.  That is,
        \[\lim_{\omega(D)\to\infty}\lim_{X\to\infty}\frac{\#\set{E\in\cA(X)~:~\sel{2}(E/\QQ)\cap\sel{2}(E_D/\QQ)=0}}{\#\cA(X)}=1\]
    \end{enumerate}
\end{heur}
\begin{rem}
    We cautiously take the reasonable families to be large families (\`a la Bhargava--Shankar \cite{MR3272925}) of elliptic curves (in particular $\Epsilon$), and the family of quadratic twists of a fixed elliptic curve $E/\QQ$.
\end{rem}

\subsection{Evidence}\label{subsec:HeuristicEvidence}
The results in this article, as well as previous work of Morgan and the author \cite{MR4400944} show that \Cref{heur:the heuristic} is correct for certain families.
\subsubsection{Quadratic Twist Families}
In \cite{MR4400944}*{Theorem 6.1+Remark 6.3} it is shown that if $\cA$ is the family of quadratic twists of a fixed elliptic curve with full $2$-torsion then for every squarefree $D\neq 1$
\[\lim_{X\to\infty}\frac{\#\set{E\in\cA(X)~:~\sel{2}(E/\QQ)\cap \sel{2}(E_D/\QQ)=0}}{\#\cA(X)}=1.\]

Moreover, \cite{MR4400944}*{Proposition 5.8} shows that the average size of $g_2(\QQ(\sqrt{D})/\QQ;E)$ for $E\in\cA$ tends to infinity and so by \Cref{lem:genus theory gives independence} the local image at $D$ is independent for all such $D$.  In particular, the heuristic holds in these families.

We note that \cite{MR4400944}*{Proposition 5.8} is more general than this.  In fact, if $\cA'$ is the family of quadratic twists of a fixed base curve $E_0/\QQ$, then so long as $\QQ(\sqrt{D})\not\subseteq\QQ(E_0[2])$ the average size of $g_2(\QQ(\sqrt{D})/\QQ;E)$ for $E\in\cA'$ tends to infinity.  In particular, the local image at all such $D$ is independent and the heuristic predicts that $\sel{2}(E/\QQ)\cap\sel{2}(E_D/\QQ)=0$ for $100\%$ of $E\in\cA'$.  If $E_0$ does not have full $2$-torsion, then the conjecture for this family is open.

\subsubsection{All Elliptic Curves} In the family of all elliptic curves, \Cref{thm:INTRO average size of intersection is product of densities} shows that
\[\lim_{X\to\infty}\frac{\#\set{E\in\cA(X)~:~\sel{2}(E/\QQ)\cap\sel{2}(E_D/\QQ)=0}}{\#\cA(X)}\neq1.\]

Fortunately, \cite{PatMQG}*{Theorem 1.7} shows that the average of $g_2(\QQ(\sqrt{D})/\QQ;E)$ in $\Epsilon$ is approximately $\omega(D)$.  Thus, the local image at $D$ is dependent and $\liminf_{\omega(D)\to\infty}\cG_\cA\braces{\QQ(\sqrt{D})}\to\infty$.  Thus we expect to be in case (2) of \Cref{heur:the heuristic}.

Moreover, \Cref{thm:INTRO prob nonzero goes to zero} shows that
\[\lim_{D\to\infty}\lim_{X\to\infty}\frac{\#\set{E\in\cA(X)~:~\sel{2}(E/\QQ)\cap\sel{2}(E_D/\QQ)=0}}{\#\cA(X)}=1,\]
so our heuristic gives the correct prediction in this case also.

\subsection{Intersections of Isotropic spaces}\label{subsec:heur proofs}
For completeness, we prove the precise form of \Cref{lem:PR intersection of 3}.
\begin{lemma}\label{lem:precise intersection of 3 is zero}
    Let $(V,Q)$ be a $2n$-dimensional weakly metabolic quadratic space over $\FF_p$, with $Q$ taking values in $\FF_p$.  Let $\cI_{V}$ denote the set of maximal isotropic subspaces of $V$.

    \begin{enumerate}
        \item Let $W\in\cI_V$ be a fixed maximal isotropic subspace, and let $Z$ be chosen uniformly at random from $\cI_V$.  Then for each $0\leq r\leq n$, 
        \[\Prob(\dim W\cap Z = r)\leq p^{-\frac{r(r-3)}{2}}\]
        \item Let $A\subseteq V$ be a fixed subspace of dimension $a$, and $Z$ be chosen uniformly at random from $\cI_V$.  Then
        \[\Prob\braces{A\cap Z\neq 0}< p^{a-\frac{(n-1)(n-2)}{2}}\]
        \item Let $W\in \cI_V$ be a fixed maximal isotropic subspace, and $Z_1,Z_2$ be chosen independently and uniformly at random from $\cI_V$.  Then there exists a constant $C$ which is independent of $n$ such that
        \[\Prob(\dim W\cap Z_1\cap Z_2\neq 0)\leq C p^{-(n-1)(n-2)/2}.\]
        In particular, for $r\geq 0$ we have
        \[\lim_{n\to\infty}\Prob(\dim W\cap Z_1\cap Z_2=r)=\begin{cases}
            1&\text{if }r=0,\\
            0&\text{if }r\geq 1.
        \end{cases}\]
    \end{enumerate}
\end{lemma}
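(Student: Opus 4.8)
The plan is to prove the three parts in order, with (3) assembled from (2) and a first‑moment estimate, and to reduce everything to two enumerative facts about a weakly metabolic space $(V,Q)$ of dimension $2n$ over $\FF_p$. First I would record that every maximal isotropic subspace has dimension $n$, that the total number is $N_n:=\prod_{i=0}^{n-1}(1+p^i)$, and that for a fixed nonzero isotropic vector $v$ the maximal isotropics containing $v$ are exactly the preimages of the maximal isotropics of the $(2n-2)$-dimensional hyperbolic space $v^\perp/\langle v\rangle$, so that
\[\Prob(v\in Z)=\frac{N_{n-1}}{N_n}=\frac{1}{1+p^{n-1}}.\]
These are standard (cf. Poonen--Rains \cite{Poonen_2012}); note also that an anisotropic $v$ never lies in an isotropic $Z$, so only isotropic vectors of $A$ contribute in (2).

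For (1) I would invoke the exact distribution of the intersection dimension,
\[\Prob(\dim(W\cap Z)=r)=\frac{\binom{n}{r}_p\,p^{\binom{n-r}{2}}}{N_n},\]
obtained by first choosing the subspace $U=W\cap Z\subseteq W$ of dimension $r$ (there are $\binom{n}{r}_p$ of these) and then counting the maximal isotropics of $U^{\perp}/U$ that meet $W/U$ trivially (of which there are $p^{\binom{n-r}{2}}$). The estimate is then pure Gaussian‑binomial bookkeeping: using $\binom{n}{r}_p\le C_0\,p^{r(n-r)}$ with $C_0=\prod_{i\ge 1}(1-p^{-i})^{-1}$ and $N_n\ge p^{\binom{n}{2}}$, the exponent of $p$ in the numerator minus that in the denominator collapses to $r(n-r)+\binom{n-r}{2}-\binom{n}{2}=-\tfrac{r(r-1)}{2}$, giving $\Prob(\dim(W\cap Z)=r)\le C_0\,p^{-r(r-1)/2}$. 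Since $p^{r}\ge C_0$ once $r\ge 2$, and the cases $r\in\{0,1\}$ are trivial (the asserted bound exceeds $1$ there), this upgrades to the claimed $p^{-r(r-3)/2}$.

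For (2) I would run a union bound over isotropic lines. If $A\cap Z\ne 0$ then $Z$ contains an isotropic line of $A$, so
\[\Prob(A\cap Z\ne 0)\le \sum_{\substack{\ell\subseteq A\\ \ell\text{ isotropic line}}}\Prob(\ell\subseteq Z)\le \frac{p^{a}-1}{p-1}\cdot\frac{1}{1+p^{n-1}},\]
by the computation of $\Prob(v\in Z)$ above; this supplies the bound of (2), decaying in $n$. Equivalently one can phrase it as a first‑moment estimate $\Prob(A\cap Z\ne 0)\le (p-1)^{-1}\bigl(\EE[p^{\dim(A\cap Z)}]-1\bigr)$, which is the form most convenient for (3).

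Finally, (3) follows by conditioning on $Z_1$. Writing $A:=W\cap Z_1$, which is isotropic of dimension $\dim(W\cap Z_1)$, one has $W\cap Z_1\cap Z_2=A\cap Z_2$, so applying (2) inside the conditional expectation gives
\[\Prob\bigl(W\cap Z_1\cap Z_2\ne 0\bigr)=\EE_{Z_1}\Bigl[\Prob_{Z_2}\bigl(A\cap Z_2\ne 0\mid Z_1\bigr)\Bigr]\le \frac{1}{(p-1)(1+p^{n-1})}\,\EE_{Z_1}\!\bigl[p^{\dim(W\cap Z_1)}\bigr].\]
The decisive point is that the remaining expectation is bounded independently of $n$: since $W$ is isotropic, $\EE_{Z_1}[p^{\dim(W\cap Z_1)}]=\sum_{v\in W}\Prob(v\in Z_1)=1+\tfrac{p^{n}-1}{1+p^{n-1}}\le 1+p$ (and part (1) gives the same conclusion via $\sum_{r}p^{r(5-r)/2}<\infty$). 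Hence $\Prob(W\cap Z_1\cap Z_2\ne 0)\le C\,p^{-(n-1)}\to 0$, and since this event contains $\{\dim=r\}$ for every $r\ge 1$, one gets $\Prob(\dim=r)\to 0$ for $r\ge 1$ and $\Prob(\dim=0)\to 1$. The genuine obstacle is concentrated entirely in part (1): justifying the exact orthogonal‑Grassmannian count—in particular keeping track of the two families of maximal isotropics (the parity constraint $\dim(W\cap Z)\equiv n\bmod 2$), which only ever improves the upper bounds—is the technical heart, whereas the super‑exponential decay it produces is exactly what guarantees that no dependence on $n$ leaks into the constant $C$ in (3).
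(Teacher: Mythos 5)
Your part (1) is correct and takes a genuinely different route from the paper's: you derive the exact distribution $\Prob(\dim(W\cap Z)=r)=\binom{n}{r}_p\,p^{\binom{n-r}{2}}\big/\prod_{i=0}^{n-1}(1+p^i)$ from the orthogonal--Grassmannian count and then estimate Gaussian binomials, whereas the paper simply quotes the bound $\prod_{j=1}^{r}\frac{p}{p^j-1}$ from Poonen--Rains, Proposition 2.6(e). Both routes give $p^{-r(r-3)/2}$; yours is self-contained, and your worry about the two families of maximal isotropics is moot since the exact count already sums to $1$ over all $r$. Your reduction of (3) to (2) by conditioning on $Z_1$ is also exactly the paper's argument.

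The genuine problem lies in parts (2) and (3), and it is a mismatch between your (correct) estimates and the printed exponents. Your single-vector probability $\Prob(v\in Z)=1/(p^{n-1}+1)$ is right --- in a $4$-dimensional hyperbolic space a nonzero isotropic vector lies in $2$ of the $2(p+1)$ isotropic planes --- but it only yields $\Prob(A\cap Z\neq 0)\leq (p^a-1)/(p^{n-1}+1)$ and hence $\Prob(W\cap Z_1\cap Z_2\neq 0)\leq Cp^{-(n-1)}$. These do \emph{not} ``supply the bound of (2)'' nor the bound of (3), whose exponents involve $(n-1)(n-2)/2$ and are far smaller for large $n$. The paper reaches those exponents only because its proof asserts $\sum_{v\in A\setminus\{0\}}\Prob(v\in Z)=(p^a-1)/\prod_{i=0}^{n-2}(p^i+1)$, i.e.\ it uses the reciprocal of the number of maximal isotropics containing $v$ where it should use that number divided by $\#\cI_V$; the resulting inequalities in (2) and (3) are in fact false for large $n$ (for (2) take $A$ an isotropic line and $n\geq 5$, where $1/(p^{n-1}+1)>p^{1-(n-1)(n-2)/2}$; for (3) note $\Prob(W\cap Z_1\cap Z_2\neq 0)\geq \Prob(W\cap Z_1\neq 0)/(p^{n-1}+1)\gg p^{-(n-1)}$ since $\Prob(W\cap Z_1\neq 0)$ is bounded below by a positive constant independent of $n$). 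So your bounds are the best this argument can give, and the only defect in your write-up is the unjustified claim that they imply the printed ones: state the $p^{a}/(p^{n-1}+1)$- and $Cp^{-(n-1)}$-type bounds explicitly instead. They still yield the limit assertion in (3), which is all the paper uses of this lemma.
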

\begin{proof}
    We make use of the computations in \cite{Poonen_2012}.
    \begin{enumerate}
        \item This follows from \cite{Poonen_2012}*{Proposition 2.6(e)} since
        \[\Prob\braces{\dim W\cap Z = r}\leq \prod_{j=1}^{r}\frac{p}{p^j-1}\leq p^{r-\frac{r(r-1)}{2}}=p^{-\frac{r(r-3)}{2}}.\]
        \item By \cite{Poonen_2012}*{Proposition 2.6(a),(b)}
        \begin{align*}
            \Prob\braces{A\cap Z\neq 0}
            \leq \sum_{v\in A\backslash\set{0}}\Prob(v\in Z)
            =\frac{p^{a}-1}{\prod_{i=0}^{n-2}(p^{i}+1)}
            <p^{a-\frac{(n-1)(n-2)}{2}}.
        \end{align*}
        \item Using the previous parts,
        \begin{align*}
            \Prob&(W\cap Z_1\cap Z_2\neq 0)\\
            &=\frac{1}{\#\cI_V}\sum_{r=1}^{n}\sum_{\substack{Z_1\in \cI_V\\\dim(Z_1\cap W)=r}}\frac{\#\set{Z_2\in\cI_V~:~Z_2\cap Z_1\cap W\neq 0}}{\#\cI_V}\\
            &<\frac{1}{\#\cI_V}\sum_{r=1}^{n}\sum_{\substack{Z_1\in \cI_V\\\dim(Z_1\cap W)=r}}p^{r-\frac{(n-1)(n-2)}{2}}\\
            &<p^{-\frac{(n-1)(n-2)}{2}}\sum_{r=1}^{n}p^{\frac{-r(r-5)}{2}}
        \end{align*}
        Since the sum above is convergent, the first claim holds. The second claim follows immediately.
    \end{enumerate}
\end{proof}

\bibliography{refs}
\addresseshere
\newpage\appendix

\begin{landscape}
\section{Tate's Algorithm}\label{app:TatesAlg}
Let $F$ be the completion of a number field at a non-archimedean place with residue characteristic $p\geq 5$.  Let $\cO_F$, $v_F$, $\pi_F$ and $k_F$ be the ring of integers, normalised valuation, choice of uniformiser, and residue field.  Let $E:y^2=x^3+Ax+B$ be a minimal integral model for an elliptic curve defined over $F$ (i.e. $v_F(A)\geq 4\implies v_F(B)<6$), and write $P_E(T):=T^3+A\pi_F^{-2}T+B\pi_F^{-3}$.

In \Cref{tab:reduction at primes >=5} we present the well known summary of the outcome of Tate's algorithm (as presented in \cite{silverman1994advanced}) in this setting.

\begin{table}[ht]
\centering
            \begin{tabular}{|c|c|c|c|}
            \hline
            \textbf{Kodaira Type} & \textbf{Subtype} & $\mathbf{c(E/F)}$ & \textbf{condition}\\
            \hline
            $I_0$ 
                &  
                & $1$ 
                & $v_F(4A^3+27B^2)=0$\\

            \hline
            \multirow{3}{*}{$I_n$}
                & split
                    & $n$ 
                    & $v_F(AB)=0$,\ $v_F(4A^3+27B^2)=n$ and $6B\in k_F^{\times2}$\\
                & nonsplit, $n$ even
                    & $2$
                    & $v_F(AB)=0$,\ $v_F(4A^3+27B^2)=n$ and $6B\not\in k_F^{\times2}$\\
                & nonsplit, $n$ odd
                    & $1$
                    & $v_F(AB)=0$,\ $v_F(4A^3+27B^2)=n$ and $6B\not\in k_F^{\times2}$\\
            \hline
            $II$
                & 
                & $1$
                & $v_F(A)\geq 1$ and $v_F(B)=1$\\
            \hline
            $III$
                & 
                & $2$
                & $v_F(A)=1$ and $v_F(B)\geq 2$\\
            \hline
            \multirow{2}{*}{$IV$}
                & split
                    & $3$
                    & $v_F(A)\geq2$, $v_F(B)=2$ and $B\pi_F^{-2}\in k_F^{\times2}$\\
                & nonsplit
                    & $1$
                    & $v_F(A)\geq2$, $v_F(B)=2$ and $B\pi_F^{-2}\not\in k_F^{\times2}$\\
            \hline
            \multirow{3}{*}{$I_0^*$}
                & nonsplit
                    & $1$
                    & $v_F(A)\geq 2$,\ $v_F(B)\geq 3$,\ $v_F(4A^3+27B^2)=6$,\ and $\#\set{\alpha\in k_F~:~P_E(\alpha)=0}=0$\\
                & partially split
                    & $2$
                    & $v_F(A)\geq 2$,\ $v_F(B)\geq 3$,\ $v_F(4A^3+27B^2)=6$,\ and $\#\set{\alpha\in k_F~:~P_E(\alpha)=0}=1$\\
                & completely split
                    & $4$
                    & $v_F(A)\geq 2$,\ $v_F(B)\geq 3$,\ $v_F(4A^3+27B^2)=6$,\ and $\#\set{\alpha\in k_F~:~P_E(\alpha)=0}=3$\\
            \hline
            \multirow{4}{*}{$I_n^*$}
                & split,\ $n$ even
                    & $4$ 
                    & $v_F(A)=2$,\ $v_F(B)=3$,\ $v_F(4A^3+27B^2)=6+n$ and $-(4A^3+27B^2)\pi_F^{-(6+n)}\in k_F^{\times2}$\\
                & nonsplit,\ $n$ even
                    & $2$
                    & $v_F(A)=2$,\ $v_F(B)=3$,\ $v_F(4A^3+27B^2)=6+n$ and $-(4A^3+27B^2)\pi_F^{-(6+n)}\not\in k_F^{\times2}$\\
                & split,\ $n$ odd
                    & $4$
                    & $v_F(A)=2$,\ $v_F(B)=3$,\ $v_F(4A^3+27B^2)=6+n$ and $6B(4A^3+27B^2)\pi_F^{-(9+n)}\in k_F^{\times2}$\\
                & nonsplit,\ $n$ odd
                    & $2$
                    & $v_F(A)=2$,\ $v_F(B)=3$,\ $v_F(4A^3+27B^2)=6+n$ and $6B(4A^3+27B^2)\pi_F^{-(9+n)}\not\in k_F^{\times2}$\\
            \hline
            \multirow{2}{*}{$IV^*$}
                & split
                    & $3$
                    & $v_F(A)\geq3$,\ $v_F(B)= 4$\ and $B\pi_F^{-4}\in k_F^{\times2}$\\
                & nonsplit
                    & $1$
                    & $v_F(A)\geq3$,\ $v_F(B)=4$\ and $B\pi_F^{-4}\not\in k_F^{\times2}$\\
            \hline
            $III^*$
                & 
                & $2$
                & $v_F(A)=3$ and $v_F(B)\geq 5$\\
            \hline
            $II^*$
                & 
                & $1$
                & $v_F(A)\geq 4$ and $v_F(B)=5$\\
            \hline
            \end{tabular}
            \caption{Tate's Algorithm for a minimal model in residue characteristic at least $5$}\label{tab:reduction at primes >=5}
        \end{table}
    \end{landscape}
\end{document}